\documentclass{amsart}

\usepackage[latin1]{inputenc}
\usepackage{amsfonts}
\usepackage{amsmath}
\usepackage{amsthm}
\usepackage{amssymb}
\usepackage{latexsym}
\usepackage{enumerate}

\newtheorem{theorem}{Theorem}[section]
\newtheorem{lemma}[theorem]{Lemma}
\newtheorem{proposition}[theorem]{Proposition}

\newtheorem{question}[theorem]{Question}

\newtheorem{definition}[theorem]{Definition}

\numberwithin{equation}{section}

\newcommand{\N}{\mathbb{N}}

\newcommand{\R}{\mathbb{R}}
\newcommand{\G}{\mathbb{G}}

\newcommand{\PP}{\mathbb{P}}

\newcommand{\SSS}{\mathbb{S}}
\newcommand{\forces}{\Vdash}

\newcommand{\F}{\mathcal{F}}
\newcommand{\FF}{\mathcal{F}}
\newcommand{\M}{\mathcal{M}}
\newcommand{\A}{\mathcal{A}}
\newcommand{\B}{\mathcal{B}}
\newcommand{\D}{\mathcal{D}}
\newcommand{\E}{\mathcal{E}}
\newcommand{\I}{\mathcal{I}}
\newcommand{\J}{\mathcal{J}}
\newcommand{\U}{\mathcal{U}}
\newcommand{\X}{\mathcal{X}}

\newcommand{\CC}{\mathcal{C}}

\newcommand{\bb}{\mathcal B(\ell_2)}
\newcommand{\HH}{\mathcal{H}}

\newcommand{\K}{\mathcal{K}(\ell_2)}

\newcommand{\QQ}{\mathcal{Q}(\ell_2)}

\begin{document}

\author{Piotr Koszmider}
\address{Institute of Mathematics of the Polish Academy of Sciences,
ul.  \'Sniadeckich 8,  00-656 Warszawa, Poland}
\email{\texttt{piotr.math@proton.me}}

\thanks{The  author was  partially supported by the NCN (National Science
Center, Poland) research grant no.\ 2020/37/B/ST1/02613.}

\subjclass[2020]{03E35, 46L05, 03E75}

\title[Small masas in the Cohen model]{Small masas of the Calkin algebra in the Cohen model}

\begin{abstract} 
We show that maximal abelian C*-subalgebras (masas) of the Calkin algebra
(the algebra of all bounded operators on the separable Hilbert space modulo compact operators)
may consistently have their densities strictly less than continuum and we 
describe many   isomorphism types of such masas.

Specifically, we prove that after adding any number of Cohen reals to a model of {\sf CH}
the algebra $C(K_\A)$ of all complex-valued continuous functions 
on the Stone space $K_\A$ of a Boolean algebra $\A$ of cardinality $\omega_1$ is 
$*$-isomorphic to a masa of the  Calkin algebra if and only if $\A$ does not admit
 a countably generated ultrafilter. Moreover, for every such Boolean algebra we obtain $\omega_2$ pairwise
unitarily non-equivalent such masas, none of which has a commutative lift.

We also show in {\sf ZFC} that if a C*-algebra of the form $C(K)$ for any 
compact Hausdorff $K$ is $*$-isomorphic to a masa of
the Calkin algebra, then no point of  $K$ may have character smaller than $\mathfrak p$.
Therefore, consistently, there may not be any masa of the Calkin algebra of density less than continuum.
\end{abstract}

\maketitle

\section{Introduction}

In this paper we consider the noncommutative 
analogue of the Boolean algebra $\wp(\N)/Fin$, namely the Calkin algebra 
$\QQ=\bb/\K$, where $\bb$ stands for the algebra of all bounded linear  operators
on the separable Hilbert space $\ell_2$ and $\K$ stands for its two-sided ideal
of compact operators (\cite{calkin}, cf. \cite{ilijas-book}). 
By $\pi: \bb\rightarrow \QQ$ we will denote the quotient homomorphism.
Like the Boolean algebra
$\wp(\N)/Fin$, the Calkin algebra is rather sensitive to additional set-theoretic hypotheses.

The most celebrated example of such phenomena, corresponding to
the problem of the existence of a nontrivial automorphism of $\wp(\N)/Fin$,
seems to be the undecidability of the existence of
outer automorphisms of $\QQ$ (\cite{phillips, farah-ann}).
Some other results with this flavour are presented, among others, in the
papers \cite{akemann-pure, pathology, farah-ma, damian, universal-calkin, masas, ss,
vaccaro-thesis, vaccaro-pacific, vaccaro-ijm, vignati-thesis}.
However, infinitary combinatorial arguments available in {\sf ZFC}
such as those related to constructions of almost disjoint families (\cite{masas}) or independent families
(\cite{pure}) are sometimes sufficient  in the context of the Calkin algebra as well.

Specifically,  we investigate here the class of maximal abelian C*-subalgebras of $\QQ$ known
as masas. We focus only on masas generated by projections, that is elements $P\in \QQ$ satisfying
$P=P^2=P^*$ (the reason is that presently available set-theoretic
 methods only work in this context, for other masas we
face $K$-theoretic obstacles).   
If a masa  is generated by projections, then, by the Gelfand-Naimark theorem,
its projections  form  a Boolean algebra $\A$  and  the masa is
$*$-isomorphic to $C(K_\A)$, where $K_\A$ is the Stone space of  $\A$ 
(Lemma \ref{cstar-ba-iso} cf. Definition \ref{ba-projections}).

 There are two  paradigmatic  types of masas of $\QQ$
due to the fact that they are of the form $\pi[\M]$, where $\M$ is a masa
of $\bb$. The first type occurs when $\M=\D(\E)$, where $\D(\E)$ is the C*-subalgebra of
all diagonal operators in $\bb$ with respect to an orthonormal basis $\E$ of $\ell_2$,
and the second occurs when $\M=\M(U)=\{U^*M_f U: f\in L_\infty([0,1])\}$,   where
$U: \ell_2\rightarrow L_2([0,1])$ is unitary and $M_f: L_2([0,1])\rightarrow L_2([0,1])$
are multiplication operators by $f\in L_\infty([0,1])$. All these  masas are generated by projections.

Masas of $\QQ$ of the form $\pi[\D(\E)]$ are $*$-isomorphic to $C(\N^*)$,
where $\N^*=\beta\N\setminus\N$  while
$\pi[\M(U)]$s are isomorphic to $C(K_\A)$, where $\A$ is the Lebesgue measure 
Boolean algebra. All other masas of $\QQ$ which have commutative lifts to $\bb$
are of these forms or are their direct sums
(see Chapter 12 of \cite{ilijas-book}, cf. Introduction to \cite{masas}).

The existence of other masas in $\QQ$ like those which are
not generated by projections or are generated by projections but do not have commutative
lifts was already considered by Brown, Douglas and Fillmore in \cite{bdf}  or
by Anderson in \cite{pathology} under the continuum hypothesis {\sf CH}, while Akemann and Weaver gave
a simple counting argument  showing in {\sf ZFC} the existence
of $2^{2^\omega}$ masas of $\QQ$ without commutative lifts. 
However, only in \cite{masas} were the isomorphism types of such masas described,
some in {\sf ZFC} and all of the form $C(K_\A)$ for $\A$ a Boolean algebra under {\sf CH}.

The {\sf CH} characterization of $\A$s such that $C(K_\A)$ is $*$-isomorphic to some masa of $\QQ$
obtained in \cite{masas} cannot hold in {\sf ZFC} as witnessed, 
among others, by the Cohen model  (\cite{vaccaro-thesis},  \cite{damian}). So it is natural to 
continue this investigation of the class of Boolean algebras $\A$ for which $C(K_\A)$s 
can be masas of $\QQ$ specifically in the Cohen model or in general aiming at alternative characterizations
if possible in {\sf ZFC}.

If $C(K_\A)$ is $*$-isomorphic to a masa of $\QQ$, then necessarily $|\A|\leq2^\omega$.
 In \cite{masas} we also proved that
such a Boolean algebra cannot have a countably generated ultrafilter. The first result of this paper improves it
to:
\begin{theorem}\label{p-main}
If $K$ is a compact Hausdorff space such that
$C(K)$ is $*$-isomorphic to a masa in $\QQ$, then $K$ has no point of 
character\footnote{By a character $\chi(x)$ of a point $x$ in a topological space (ultrafilter of a Boolean algebra)
we mean the minimal cardinality of a basis of neighbourhoods at this point (a basis of ultrafilter).
The minimal character $\min\chi(K)$ ($\min\chi(\A)$) of a compact space $K$ (a Boolean algebra $\A$) is 
the minimum of the characters of all points of $K$ (all ultrafilters of $\A$).} less than
 $\mathfrak p$\footnote{The pseudointersection number $\mathfrak p$
is the least cardinality of a family $\mathcal F$ of infinite subsets of $\N$
such that every nonempty finite intersection of members of $\mathcal F$ is infinite
and there is no infinite $X\subseteq\N$ such that $X\setminus F$ is finite for every
$F\in\mathcal F$; see Section 9.6 of \cite{ilijas-book}.}.
\end{theorem}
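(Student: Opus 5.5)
We argue by contradiction. Suppose $C(K)$ is $*$-isomorphic to a masa $\M\subseteq\QQ$ and $x\in K$ has a neighbourhood basis $\{U_\alpha:\alpha<\kappa\}$ with $\kappa<\mathfrak p$. The plan has three steps. First, build a nonzero projection $P\in\QQ$ that ``lives at $x$'', meaning $\pi(B)P=g(x)P=P\pi(B)$ whenever $\pi(B)\in\M$ corresponds to $g\in C(K)$. Second, conclude that $P\in\M$ and that $x$ is isolated. Third, show that a masa of $\QQ$ cannot have a minimal projection.

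\textbf{Step 1: the vectors.} For each $\alpha<\kappa$ choose $f_\alpha\in C(K)$ with $0\le f_\alpha\le 1$, $f_\alpha(x)=1$ and $\mathrm{supp} f_\alpha\subseteq U_\alpha$. Lift the corresponding element of $\M$ to a positive contraction $A_\alpha\in\bb$.

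We seek an orthonormal sequence $(e_n)$ with $\|(1-A_\alpha)e_n\|\to 0$ for every $\alpha<\kappa$. To get it we use Bell's theorem: $\mathsf{MA}_\kappa$ holds for $\sigma$-centered posets when $\kappa<\mathfrak p$. Fix a countable dense set $D\subseteq\ell_2$ and consider the following poset.
\begin{itemize}
\item Conditions are pairs $(s,F)$, where $s$ is a finite sequence of elements of $D$ that are almost orthonormal (with errors summable in a fixed way) and $F\in[\kappa]^{<\omega}$.
\item $(s',F')\le(s,F)$ iff $s'$ end-extends $s$, $F'\supseteq F$, and every new vector $v=s'(k)$ with $k\ge|s|$ satisfies $\|(1-A_\alpha)v\|<1/k$ for all $\alpha\in F$.
\end{itemize}
Conditions with the same $s$ are compatible, so the poset is $\sigma$-centered.

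The density arguments reduce to the following claim. For finite $F$, choose an $\alpha$ with $U_\alpha\subseteq\bigcap_{\beta\in F}U_\beta$ and a $g\in C(K)$ with $0\le g\le 1$, $g(x)=1$, supported where all $f_\beta$, $\beta\in F$, are close to $1$. Let $B$ be a positive lift of $g$. Since $\pi(B)$ has norm $1$, its spectral subspace for $[1-\delta,1]$ is infinite dimensional. Hence there are vectors almost orthogonal to any given finite set with $\|(1-A_\beta)v\|$ small for all $\beta\in F$, because $\pi(A_\beta)\pi(B)\approx\pi(B)$ modulo compacts. We then approximate these vectors from $D$.

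Meeting the $\kappa$ dense sets $\{(s,F):\alpha\in F\}$ and the countably many sets $\{(s,F):|s|\ge n\}$ yields the sequence; Gram--Schmidt makes it truly orthonormal with only a compact perturbation. Let $P=\pi(\mathrm{proj}_{\overline{\mathrm{span}}\{e_n\}})$.

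\textbf{Step 2: $P\in\M$ and $x$ is isolated.} For any $g\in C(K)$ and $\varepsilon>0$ there is an $\alpha$ with $|g-g(x)|<\varepsilon$ on $U_\alpha$, so $\|(g-g(x))f_\alpha\|\le\varepsilon$. Hence, with $B$ a lift of $g$,
\[
\limsup_n\|(B-g(x))e_n\|\le\varepsilon+\limsup_n\|(1-A_\alpha)e_n\|\cdot\|B-g(x)\|=\varepsilon .
\]
So $(B-g(x))$ restricted to the span of the $e_n$ is compact, i.e.\ $\pi(B)P=g(x)P$. Taking adjoints (applied to $\bar g$) gives $P\pi(B)=g(x)P$.

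Thus $P$ commutes with $\M$, so $P\in\M$ by maximality. As a nonzero projection in $C(K)$ it is $\chi_V$ for a clopen $V$. The identity $\chi_V g=g(x)\chi_V$ for all $g$ forces $V=\{x\}$, so $x$ is isolated.

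\textbf{Step 3: no minimal projections.} Now $P=\chi_{\{x\}}$ is a minimal projection of $\M$, and $mP=\lambda_mP$ for every $m\in\M$, where $\lambda_m\in\mathbb C$ is the value at $x$ of the function corresponding to $m$. For any $y\in P\QQ P$ we get $my=mPy=\lambda_m y$ and $ym=yPm=\lambda_m y$. So $P\QQ P$ commutes with $\M$ and hence $P\QQ P\subseteq\M$. But $P\QQ P\cong\QQ$ is noncommutative, a contradiction.

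The main obstacle is Step 1: setting up the poset so that it is genuinely $\sigma$-centered, which forces vectors from the countable set $D$ and approximate orthonormality, while keeping the density arguments valid. The latter rests on the fact that nonzero elements of $\QQ$ have infinite-dimensional approximate eigenspaces near their norm.
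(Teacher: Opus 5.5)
There is one invalid step, in Step 2. You pass from $\limsup_n\|(B-g(x))e_n\|\le\varepsilon$ for every $\varepsilon>0$ to the claim that $B-g(x)$ is compact on the closed span of the $e_n$. That implication is false: a bounded operator can send an orthonormal sequence to vectors whose norms tend to $0$ without being compact on its span. For example, split $(e_n)$ into consecutive blocks $I_k$ with $|I_k|=N_k\to\infty$, put $u_k=N_k^{-1/2}\sum_{n\in I_k}e_n$, and let $T$ be the projection onto the closed span of the $u_k$. Then $\|Te_n\|=N_k^{-1/2}\to0$ for $n\in I_k$, yet $Tu_k=u_k$ for the orthonormal $u_k$. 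So, as written, Step 2 does not give $\pi(B)P=g(x)P$, and $P\in\mathcal M$, the isolation of $x$ and Step 3 all depend on that identity. The paper's proof of Proposition \ref{masigma} is careful exactly here. Its order demands $\|T_\xi(v_i)\|\le 2^{-i}$ for new vectors, and it is the summability (12), not merely $\|T_\xi(v_i)\|\to0$, that makes $T_\xi P$ a norm limit of finite-rank operators.

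The repair is available inside your construction, but it must be done at the level of operators rather than vectors. With $E$ the projection onto the closed span of the $e_n$,
\[
\|\pi(B-g(x))P\|\le\|\pi((B-g(x))A_\alpha)\|+\|B-g(x)\|\,\|\pi((1-A_\alpha)E)\|,
\]
and the first term equals $\|(g-g(x))f_\alpha\|\le\varepsilon$. So what you need is that $(1-A_\alpha)E$ is compact for every $\alpha$. Your bound $1/k$ does give this, but only because it is square-summable. Together with summable Gram--Schmidt corrections it yields $\sum_n\|(1-A_\alpha)e_n\|^2<\infty$, so $(1-A_\alpha)E$ is Hilbert--Schmidt. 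Alternatively, use $2^{-k}$ and argue as in (12).

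With this fix your proof is correct and is essentially the paper's argument: Bell's theorem applied to the same kind of $\sigma$-centered poset, with your $1-A_\alpha$ playing the role of the lifts $T_\xi$ of functions vanishing near $x$. The only structural difference is the ending. The paper splits the sequence into even and odd terms, obtaining two orthogonal nonzero projections commuting with the masa, at most one of which can be $\iota(1_{\{x\}})$. Your Step 3, where the corner $P\QQ P\cong\QQ$ would sit inside the commutative masa, is also valid. It additionally covers the isolated-point case, which the paper delegates to Proposition 4.1 of \cite{masas}.
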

\begin{proof}
Suppose that $x\in K$ has character $\kappa<\mathfrak p$.
For finite $\kappa$ we use Proposition 4.1 of \cite{masas}. For infinite $\kappa$
we  use Proposition \ref{masigma} and a theorem of M. Bell (\cite{bell}) in a version of Theorem 3.1 from
\cite{velickovic} which says that $\kappa<\mathfrak p$ is equivalent to
${\sf MA}_{\sigma-\hbox{\rm centered}}(\kappa)$. 
\end{proof}
So, for example Martin's axiom implies that there are no masas of $\QQ$ of density strictly less
than continuum. 
However, our second main result shows that one can consistently 
have many diverse masas of the form $C(K_\A)$
of density strictly less than $2^\omega$ and 
such that no ultrafilter of $\A$ has character bigger than $\mathfrak p<2^\omega$.

\begin{theorem}\label{main}
In the model obtained by adding Cohen reals to a model of {\sf CH} the following holds: 
For every Boolean algebra $\A$ of cardinality $\omega_1$ which does
not admit a countably generated ultrafilter
there is a family $\{\CC_\alpha: \alpha<\omega_2\}$
of unitarily non-equivalent 
 C*-subalgebras of $\QQ$ such that
  for every $\alpha<\omega_2$ the algebra
$\CC_\alpha$ is a masa of $\QQ$ without a commutative lift
which is $*$-isomorphic to $C(K_\A)$.
\end{theorem}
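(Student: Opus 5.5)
The plan is to build, in the Cohen extension $V[G]$ with $V\models{\sf CH}$, an embedding of $\A$ into the projections of $\QQ$ whose image generates a masa. Two standard facts about Cohen forcing do most of the work. First, $\A$ has size $\omega_1$, so by ccc it lies in an intermediate model $V[G\restriction X]$ with $|X|=\omega_1$, and that model satisfies {\sf CH}. Second, $\mathrm{Add}(\omega,\kappa)$ factors as $\mathrm{Add}(\omega,X)\times\mathrm{Add}(\omega,\kappa\setminus X)$. I would represent $\A$ as a continuous increasing union $\bigcup_{\xi<\omega_1}\A_\xi$ of countable subalgebras. Then I would construct, by recursion on $\xi$, commuting projections $P_a\in\bb$ modulo compacts, say diagonal blocks adapted to a sequence of finite-dimensional subspaces, together with a Boolean embedding $a\mapsto\pi(P_a)$.

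At each stage I need a commuting family of projections in $\QQ$ extending the countable one, and I need to realize the new generator $a$. The ultrafilters of $\A_\xi$ that do not extend uniquely must remain "split" in the final algebra. The standard {\sf CH}-construction of \cite{masas} does this by diagonalizing against all operators in the intermediate model. Maximality is the requirement that any operator $T$ commuting modulo compacts with all $\pi(P_a)$ lies in the C*-algebra they generate. For this I would use the following reflection argument. Every $T\in\bb$ in $V[G]$ has a countable name, so it appears in $V[G\restriction (X\cup C)]$ for some countable $C$. Coding the construction so that it is "generic over" such countable extensions is where the Cohen reals are used. I would add, for each stage, a fresh Cohen real that chooses the block decomposition, so that any $T$ from a countable extension is handled by a density argument.

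The key lemma is the following. If $T$ commutes with all $\pi(P_a)$, then for each ultrafilter $u$ of $\A$ the "localization" of $T$ at $u$ is a scalar $\lambda_u$ modulo compacts. Here one uses that $u$ is not countably generated, which is exactly the hypothesis. The function $u\mapsto\lambda_u$ is continuous on $K_\A$, so $T$ equals the corresponding element of $C(K_\A)$. If $u$ were countably generated, Proposition \ref{masigma}-style arguments, as in Theorem \ref{p-main}, would instead give a pseudointersection that produces a non-scalar commutant. The failure of countable generation is what lets genericity kill off every candidate non-scalar operator in the commutant. I expect this step to be the main obstacle. It requires showing that an operator in a countable Cohen extension cannot "see" the generic choices well enough to commute with all projections at an uncountable-character ultrafilter unless it is essentially scalar there. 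The proof would go by a $\Delta$-system/isomorphism-of-names argument over $\omega_1$ many coordinates.

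Next come non-lifting and the $\omega_2$ distinct masas. If the masa had a commutative lift, it would be $C(\N^*)$, a measure algebra type, or a sum of these, by the classification quoted in the introduction. So I would argue that $\A$ has character below $2^\omega$ at every point but is not of those forms, or more directly that the genericity prevents a diagonalization. To get $\omega_2$ unitarily non-equivalent masas, I would run the construction using disjoint sets $Y_\alpha$ of Cohen coordinates for $\alpha<\omega_2$. A unitary $U$ conjugating $\CC_\alpha$ to $\CC_\beta$ lives in a countable subextension. By mutual genericity of the $Y_\alpha$-reals and the $Y_\beta$-reals over $V[G\restriction(X\cup C)]$, such a $U$ cannot exist for all but countably many pairs. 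A $\Delta$-system thinning then yields $\omega_2$ pairwise inequivalent ones. This requires $2^\omega\ge\omega_2$ in the model; if only $\omega_1$ Cohen reals were added, the statement would need $\omega_2\le 2^\omega$ to be interpreted accordingly.
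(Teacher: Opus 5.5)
Your plan takes a different route from the paper. The paper never builds the masa inside the final model. It works in the {\sf CH} model $V[\G_S]$ that contains $\A$ and bookkeeps all $\omega_1$ pairs $(p,\dot T)$ of conditions and nice $\PP$-names. In this way it builds a copy of $\A$ whose C*-algebra remains a masa after adding one Cohen real (Lemmas \ref{substitution}, \ref{main-lemma} and Proposition \ref{ch-proposition}). Since every operator of the final model lives in a countable Cohen extension of $V[\G_S]$, this transfers (Proposition \ref{cohen-proposition}). Choosing the blocks by fresh Cohen reals, as you suggest, looks like a workable alternative that would avoid predicting names.

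\textbf{The gap: maximality.} Your proposal leaves the decisive step empty. Your ``key lemma'' (the localization of $T$ at each ultrafilter is scalar) only restates $\pi(T)\in C^*(\B)$, i.e.\ that no ultrafilter is C*-split by $T$ (Lemma \ref{split-point}). A $\Delta$-system argument over $\omega_1$ coordinates is not what proves it. What is actually needed:
\begin{enumerate}
\item An ultrafilter $\U$ of the final algebra that is C*-split by $T$. This gives $c_1<c_2$ in the spectrum of every corner $R\pi(T)R$, $R\in\U$ (Lemma \ref{c-split}).
\item A stage $\eta$, beyond the countably many coordinates of $T$, whose new generator $a_\eta$ splits the trace $\U\cap\B_\eta$. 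This, not a pointwise property of ultrafilters, is where the absence of countably generated ultrafilters enters: if no generator after stage $\xi$ split the traces of $\U$, continuity of the chain would make $\U\cap\B_\xi$ generate $\U$.
\item A generic block construction at stage $\eta$ whose blocks lie inside members of that trace but avoid $\J_{\A_\eta,a_\eta}$. Avoiding the ideal keeps the new projection realizing the Boolean type of $a_\eta$ (Lemma \ref{sikorski}), while Lemma \ref{commutator} keeps $\|[P_FTP_F,Q_F]\|$ bounded below on infinitely many blocks.
\item An absoluteness argument. $\U$ lives in $V[G]$, not in the model over which the stage-$\eta$ real is generic, so you must find in that model an ultrafilter disjoint from $\J_{\A_\eta,a_\eta}$ which $T$ C*-splits.
\end{enumerate}
None of this is in the proposal, so maximality, the heart of the theorem, is unproved.

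\textbf{The case of fewer than $\omega_2$ Cohen reals.} You misread this case: the statement does not require $2^\omega\geq\omega_2$. That model satisfies {\sf CH}, $2^{\omega_1}\geq\omega_2$, and the theorem holds there; the paper derives it from the {\sf CH} results of \cite{masas}. Your disjoint-coordinates device cannot reach this case, so it is left unproved.

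\textbf{Simplifications for $\kappa\geq\omega_2$.} Non-lifting is automatic. A masa with a commutative lift is $\pi[\M]$ for a masa $\M$ of $\bb$ (Lemma \ref{masa-masaQQ}), so it has $2^\omega>\omega_1$ projections, whereas the projections of $C^*(\B)$ form a copy of $\A$. No $\Delta$-system thinning is needed for non-equivalence either. Suppose a unitary $U$ from a countable subextension gave $U^*\CC_\beta U=\CC_\alpha$. Then every projection of $\CC_\alpha$ would have a representative in a model containing $U$ and the generators of $\CC_\beta$. The late generators of $\CC_\alpha$ are generic over that model, and a density argument shows a generic block projection is not congruent modulo $\K$ to any operator of it. This works for every pair $\alpha\neq\beta$.
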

\begin{proof} If the model is obtained by adding at least $\omega_2$ Cohen reals
to a model of {\sf CH}, then Proposition \ref{cohen-proposition} directly implies the theorem.
If we add fewer than $\omega_2$ Cohen reals, we obtain a model of {\sf CH} so
the statement of the theorem follows from the results of \cite{masas} under {\sf CH}.
\end{proof}

These results bring us closer to understanding which abelian C*-algebras may be masas of
the Calkin algebra. 
Recall that in \cite{masas} we also proved that,  assuming {\sf CH},  for Boolean algebras $\A$  of cardinality $2^\omega$
the hypothesis that the minimal character of $\A$ is uncountable is necessary and sufficient for
$C(K_\A)$ to be $*$-isomorphic to  a masa of $\QQ$. 
Theorem \ref{main} shows that  a similar situation takes place for
Boolean algebras of cardinality $\omega_1$ in the Cohen model. However this similarity stops at higher
cardinals. By a result of Vaccaro from \cite{vaccaro-thesis} the Calkin algebra in this
model cannot contain well-ordered chains of projections of length $\omega_2$,
so, for example, the algebras  $C([0,\omega_2]\times \{0,1\}^\kappa)$ cannot be masas there
for any uncountable cardinal $\kappa$  while the corresponding Boolean algebras have no ultrafilters of countable
character. Similarly by the results of \cite{damian}
the Calkin algebra in the Cohen
model cannot contain the C*-algebra $\ell_\infty(c_0(\omega_2))$, so
also C*-algebras like $C(\beta(\N\times\N^*))$ cannot be masas of $\QQ$ there
while the corresponding Boolean algebra has all
ultrafilters of uncountable character.

We direct the reader to the last section of \cite{masas} to see the description
of $2^{2^\omega}$ nonisomorphic Boolean algebras $\A$ such
that $C(K_\A)$ is $*$-isomorphic to a masa of $\QQ$ in {\sf ZFC}. But here we would like
to inspect a bit the class $\mathcal C$ of Boolean algebras $\A$ in the Cohen model
provided by Theorem \ref{main}, i.e.,
of cardinality $\omega_1$ 
such that $C(K_\A)$ is $*$-isomorphic to a masa of $\QQ$.

In fact, there is a family of  $2^{\omega_1}$ pairwise nonisomorphic Boolean algebras in $\mathcal C$
and so there is a family of the same cardinality 
of  pairwise nonisomorphic masas of $\QQ$ corresponding to elements of $\mathcal C$.
 This follows,
for example, from the {\sf ZFC} construction of Geschke and Shelah in Theorem 3.1 of \cite{geschke-shelah}:
their algebras cannot have countably generated ultrafilters as
every countable family of nonzero elements of any of the algebras constructed
there is split by a sufficiently late independent generator (see (iii) in the proof of Lemma 3.2 in \cite{geschke-shelah}).
Also all Boolean algebras in $\mathcal C$ admit uncountable independent families
(Proposition \ref{ind}).  Definitely among 
elements of $\mathcal C$ there are algebras of the form  $Clop(\{0,1\}^{\omega_1}\times L)$,
where the weight of compact totally disconnected $L$ does not exceed $\omega_1$.

Many interesting examples of  algebras in $\mathcal C$ can be obtained using Proposition
\ref{reaping} by taking a Boolean algebra of cardinality, the minimal character and
reaping number all equal to $\omega_1$ and belonging to the ground model of {\sf CH}.
For example, one may take  $\wp(\N)/Fin\cap V$ or the ground model Lebesgue measure algebra.
It is also worth noting that 
$C(\{0,1\}^{\omega_1})$ which  is $*$-isomorphic to  a masa of $\QQ$ in the Cohen model,
and under {\sf CH} by \cite{masas},
 is the group C*-algebra of the discrete abelian group
$\bigoplus_{\omega_1}\mathbb Z_2$. This  is related to recent work
on lifting properties of uncountable abelian group C*-algebras  in \cite{miles}.

The methods of this paper mainly combine the methods of the paper \cite{masas} 
adapted to forcing extensions with classical methods for constructing objects which
are Cohen forcing indestructible (see e.g. Theorem VIII.2.3 of \cite{kunen}). However, the 
above combination needs to be delicate not only because of forcing extension technicalities
(e.g. members of the spectra of operators need not be in the ground model etc).

More specifically, the Cohen forcing quite easily may add countably generated ultrafilters
in Boolean algebras which do not admit them. By Theorem \ref{p-main} such an
ultrafilter of a ground model
Boolean algebra $\A$ shows that  $C(K_\A)$ is not $*$-isomorphic
 to a masa of $\QQ$. In fact, in the 
construction of our 
Cohen indestructible masa of the form $C(K_\A)$ 
we need the hypothesis  that  the Cohen forcing does not
add a countably generated ultrafilter to the (ground or intermediate model) 
Boolean algebra $\A$ (Proposition \ref{ch-proposition}).

This is a strong hypothesis, which implies in the Cohen model
for a Boolean algebra $\A$ of cardinality $\omega_1$ that it 
contains an uncountable independent family (Proposition \ref{ind}).
In fact the appearance of an independent element over
a given countable subalgebra  is evident in the proofs of Lemmas \ref{substitution}
and \ref{main-lemma} in the case when $\dot \U$ is a generic ultrafilter in a countable atomless Boolean algebra.

The structure of the paper is as follows. Section 2 gathers all necessary preliminaries. Many of them
are directly cited from the Preliminaries of \cite{masas}, where their simple and standard proofs
can be found. Section 3 contains the
proof of the main ingredient of Theorem \ref{p-main}. Section 4 includes all the ingredients
of the proof of Theorem \ref{main}: first preparatory but most technical Lemmas \ref{substitution}
and \ref{main-lemma},  later the construction of Cohen indestructible masas under
{\sf CH} in Proposition \ref{ch-proposition} and finally Proposition \ref{cohen-proposition}
which implements Proposition \ref{ch-proposition} in the Cohen model.

The Cohen model is the best understood model of {\sf ZFC} where the continuum hypothesis fails. 
However, most of this understanding does not concern noncommutative mathematics (see e.g. \cite{another}, \cite{juris-book}, \cite{juris-top}).
It seems that there  is much space for improvement of this situation.
The results of the paper generate the following natural  questions:
\begin{question}$ $
\begin{enumerate}
\item  For which Boolean algebras $\A$ of cardinality $\omega_2$ (or $2^\omega$)
does there exist a masa $*$-isomorphic to $C(K_\A)$ in Cohen models?
\item Is it true in the Cohen model that for a Boolean algebra $\A$ with
$\min\chi(\A)>\omega_1$ ($\min\chi(\A)=2^\omega$)  any two of the  following conditions are equivalent:
\begin{enumerate}
\item $\A$ is Boolean embeddable in $\wp(\N)/Fin$,
\item $C(K_\A)$ is  $*$-embeddable in $\QQ$,
\item $C(K_\A)$ is $*$-isomorphic to a masa of $\QQ$?
\end{enumerate}
\item Is there in {\sf ZFC} a masa of the Calkin algebra of the form $C(K_\A)$ such that 
$\A$ has an ultrafilter of character $\mathfrak p$? (Or can Theorem \ref{p-main}
be improved to another cardinal invariant?)
\end{enumerate}
\end{question}
We note that by the Parovi\v cenko theorem
(see e.g. \cite{handbook-van-mill}) and our Theorem \ref{main}  all  conditions
(a) - (c) above are true in the Cohen model for algebras of cardinality 
and the minimal character $\omega_1$.

\section{Preliminaries}

\subsection{Notation}

$1_X$ denotes the characteristic function of a set $X$. The restriction of a function $f$
to a set $X$  is denoted by $f|X$.  $\N$ stands for the set of all nonnegative integers.
$Fin$ denotes the ideal of finite subsets of $\N$.
If $(a_n)_{n\in \N}$ is a sequence of complex numbers and $X\subseteq \N$ is infinite, we say 
that $\lim_{n\in X}a_n=a$ if $\lim_{k\rightarrow \infty }a_{n_k}=a$,
where $(n_k)_{k\in \N}$ is the increasing enumeration of elements of $X$.
 $2^\omega$ stands for the cardinal of $\R$ and $\omega_1$ stands for
the first uncountable cardinal.

\subsection{Boolean algebras}

Boolean algebra terminology is standard and should follow e.g., the book \cite{koppelberg}.
We use   $\wedge, \vee, -$ for Boolean operations and $1, 0$ for the unit and zero of a Boolean algebra.
The Boolean order is defined as $A\leq B$ if $A\wedge B=A$. 
Filters are subsets of Boolean algebras which do not contain $0$
and  which are upward closed in the Boolean order and closed under $\wedge$.
Ultrafilters are maximal filters. If $\U$ is an ultrafilter of a Boolean algebra $\A$, then
$A\in \U$ or $1-A\in \U$ for every $A\in \A$.  

We will deal with homomorphisms of Boolean algebras
and in particular Boolean monomorphisms (homomorphisms whose kernels are $\{0\}$) and
Boolean isomorphisms.  We  use the adjective Boolean before
the noun homomorphism, monomorphism etc, to distinguish it from 
$*$-homomorphisms concerning C*-algebras.

For every Boolean algebra
$\A$ there is a compact Hausdorff totally disconnected (with open basis consisting of clopen sets) 
space $K_\A$ such that there exists a
Boolean isomorphism $h: \A\rightarrow Clop(K_\A)$, where $Clop(K)$ denotes the Boolean algebra
of clopen subsets of $K$ with $\cap, \cup$ and the complement as Boolean operations. 
$K_\A$ is called {\sl the Stone space} of $\A$. Its points correspond to
ultrafilters of $\A$. Namely, by the compactness of $K_\A$, every ultrafilter in $\A$ is of the form
$\U_x=\{h^{-1}(U): x\in U, U\in Clop(K_\A)\}$ for some $x\in K_\A$. 
In fact,  the Stone duality says that the category of compact Hausdorff totally disconnected spaces with continuous maps
is dual to the category of Boolean algebras with homomorphisms
(Chapter 3 of \cite{koppelberg}, Chapter 16.3 of \cite{semadeni}).

\begin{definition}\label{def-ba-split}
Suppose that $\A\subseteq \B$ are Boolean algebras, $B\in\B\setminus\A$
and $\U\subseteq \A$ is an ultrafilter in $\A$. We say that $B$ splits $\U$
if $A\wedge B\not =0\not= A\wedge (1-B)$ whenever $A\in \U$.
\end{definition}

\begin{definition}\label{def-ideal-simple} Suppose that $\A\subseteq \B$ are Boolean algebras and $B\in\B$.
We say that $\B$ is a simple extension of $\A$ by $B$ if $\B$ is generated by $\A\cup\{B\}$ and $\B\not=\A$.
We say that $\B$ is a simple extension of $\A$ if there is $B\in \B$ such that $\B$ is a simple extension of $\A$ by $B$.
When $\B$ is a simple extension of $\A$ by $B$ we define
$$\J_{\A,B}=\{A\in \A: B\wedge A\in \A\}.$$
\end{definition}

\begin{lemma}\label{ideal-split} Suppose that $\A\subseteq \B$ are Boolean algebras, $B\in\B$
and $\B$ is a simple extension of $\A$ by $B$. Then $\J_{\A,B}$ is a proper ideal of
$\A$ such that the following conditions are equivalent for every ultrafilter $\U$ of $\A$:
\begin{itemize}
\item $\U$ is split by $B$,
\item $\U\cap\J_{\A,B}=\emptyset$.
\end{itemize}
\end{lemma}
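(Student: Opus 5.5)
We first check that $\J_{\A,B}$ is a proper ideal of $\A$, and then prove the equivalence by showing that each condition fails exactly when the other fails.

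\emph{$\J_{\A,B}$ is a proper ideal.} It contains $0$, since $B\wedge 0=0\in\A$. It is downward closed: if $A'\leq A\in\J_{\A,B}$ with $A'\in\A$, then $B\wedge A'=(B\wedge A)\wedge A'\in\A$. It is closed under joins, because $B\wedge(A_1\vee A_2)=(B\wedge A_1)\vee(B\wedge A_2)$. It is proper: if $1\in\J_{\A,B}$, then $B=B\wedge 1\in\A$. Every element of $\B$ is a Boolean combination of elements of $\A$ and $B$, so we would get $\B=\A$, contradicting the definition of a simple extension.

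We also record a useful symmetry. Since $A\wedge(1-B)=A-(A\wedge B)$, for $A\in\A$ we have $A\wedge B\in\A$ if and only if $A\wedge(1-B)\in\A$.

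\emph{Splitting implies $\U\cap\J_{\A,B}=\emptyset$.} Suppose $A\in\U\cap\J_{\A,B}$, and put $C=A\wedge B\in\A$. Since $\U$ is an ultrafilter, either $C\in\U$ or $A-C\in\U$, and $A-C=A\wedge(1-B)$.
\begin{itemize}
\item If $C\in\U$, then $C\wedge(1-B)=0$, so $C$ witnesses that $B$ does not split $\U$.
\item If $A\wedge(1-B)\in\U$, then $(A\wedge(1-B))\wedge B=0$, so again $B$ does not split $\U$.
\end{itemize}

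\emph{$\U\cap\J_{\A,B}=\emptyset$ implies splitting.} Suppose $B$ does not split $\U$. Then there is $A\in\U$ with $A\wedge B=0$ or $A\wedge(1-B)=0$.
\begin{itemize}
\item If $A\wedge B=0$, then $A\wedge B\in\A$, so $A\in\J_{\A,B}$.
\item If $A\wedge(1-B)=0$, then $A\wedge B=A\in\A$, so again $A\in\J_{\A,B}$.
\end{itemize}
In both cases $A\in\U\cap\J_{\A,B}$, so the intersection is nonempty.

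There is no real obstacle here. The only point needing care is the properness of the ideal, which relies on the requirement $\B\neq\A$ in the definition of a simple extension.
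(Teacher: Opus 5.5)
Your proof is correct and follows essentially the same route as the paper. Properness comes from $B\notin\A$. For $A\in\U\cap\J_{\A,B}$, the ultrafilter $\U$ contains one of $A\wedge B$ and $A-(A\wedge B)$, and either one gives a witness that $B$ does not split $\U$; conversely, a non-splitting witness $A\in\U$ has $A\wedge B\in\{0,A\}\subseteq\A$. You also check the ideal axioms explicitly, which the paper simply calls immediate.
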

\begin{proof}
It is immediate that $\J_{\A,B}$ is an ideal. Since $B\not\in \A$, the ideal $\J_{\A,B}$ must be proper. Suppose that
$\U$ is an ultrafilter of $\A$. 

If there is $A\in \J_{\A,B}\cap \U$, then both $A\wedge B$ and
$A- B=A-(A\wedge B)$ belong to $\A$, and so,  one of them is in $\U$.
If $A\wedge B\in \U$, then $(A\wedge B)\wedge(1-B)=0$ showing that $\U$ is not split by $B$.
If $A-B\in \U$, then $(A-B)\wedge B=0$ showing that $\U$ is not split by $B$ as well.

If $\U$ is not split by $B$, then there is $A\in \U$ such that either $A\wedge B=0$ 
or $A\wedge  B=A$. Either case implies that $A\in \J_{\A,B}$. 
\end{proof}

\begin{lemma}\label{gdelta-split}
Suppose that $\B$ is a countable Boolean subalgebra of a Boolean 
algebra $\A$ which does not admit a countably
 generated ultrafilter. For every ultrafilter $\U$ of $\B$ there is $A\in\A\setminus \B$ which splits $\U$.
\end{lemma}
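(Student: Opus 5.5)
Fix a countable subalgebra $\B\subseteq\A$ and an ultrafilter $\U$ of $\B$. Since $\B$ is countable, $\U$ is a countable subset of $\A$ with the finite intersection property. The plan is to argue by contradiction: assuming no element of $\A\setminus\B$ splits $\U$, we will show that $\U$ generates an ultrafilter of $\A$. That ultrafilter is then countably generated, against the hypothesis on $\A$.

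Let $\F$ be the filter of $\A$ generated by $\U$, namely $\F=\{A\in\A: A\geq U \text{ for some } U\in\U\}$. This is a proper filter because $\U$ is closed under finite meets and does not contain $0$. It is generated by the countable set $\U$, so it suffices to show that $\F$ is an ultrafilter of $\A$. Take any $A\in\A$; we need $A\in\F$ or $1-A\in\F$.

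If $A\in\B$, this is immediate: $\U$ is an ultrafilter of $\B$, so $A\in\U$ or $1-A\in\U$. If $A\in\A\setminus\B$, we use the assumption that $A$ does not split $\U$. By Definition \ref{def-ba-split} there is then $U\in\U$ with $U\wedge A=0$ or $U\wedge(1-A)=0$. In the first case $U\leq 1-A$, so $1-A\in\F$. In the second case $U\leq A$, so $A\in\F$.

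Hence $\F$ is a countably generated ultrafilter of $\A$, which is a contradiction. Therefore some $A\in\A\setminus\B$ splits $\U$. No step here is a real obstacle. The only points to check are that the filter generated by the countable family $\U$ is proper and that it counts as countably generated, and both follow at once from the fact that $\U$ is closed under finite meets.
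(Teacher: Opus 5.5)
Your proposal is correct and follows essentially the same argument as the paper: if no element of $\A\setminus\B$ splits $\U$, then every $A\in\A$ lies above some element of $\U$ or has its complement above one. Hence $\U$ generates an ultrafilter of $\A$, which is countably generated because $\B$ is countable, contradicting the hypothesis.
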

\begin{proof} If for every element $A\in \A\setminus \B$ we have $B\in \U$ such that
$B\leq A$ or $B\leq 1-A$, then $\U$ generates an ultrafilter in $\A$ which is countably
generated as $\B$ is countable.
\end{proof}

\begin{lemma}\label{sikorski} Let $\A$ be a Boolean algebra and $\B_1$, $\B_2$ be two
simple extensions of $\A$ by $B_1$ and $B_2$ respectively. Suppose that
\begin{enumerate} 
\item $\J_{\A, B_1}= \J_{\A, B_2}$ and
\item For every $A\in \J_{\A, B_2}$ we have $A\wedge B_1=A\wedge B_2$.
\end{enumerate}
Then there is a Boolean isomorphism $h: \B_1\rightarrow \B_2$
such that $h|\A=Id_{\A}$ and $h(B_1)=B_2$.
\end{lemma}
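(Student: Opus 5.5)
We use the standard Sikorski extension criterion. Its statement is: if $\phi:\A\to\mathcal C$ is a Boolean homomorphism and $C\in\mathcal C$ satisfies
\[
\phi(A)\le C \ \text{whenever } A\in\A,\ A\le B,
\qquad\text{and}\qquad
\phi(A)\wedge C=0 \ \text{whenever } A\in\A,\ A\wedge B=0,
\]
then $\phi$ extends to a homomorphism of the algebra generated by $\A\cup\{B\}$ sending $B$ to $C$.

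We apply it with $\phi=Id_{\A}$, regarded as a map into $\B_2$, with $B=B_1$ and $C=B_2$. To construct $h$ without quoting the theorem, set
\[
h\bigl((A\wedge B_1)\vee(A'-B_1)\bigr)=(A\wedge B_2)\vee(A'-B_2)
\qquad (A,A'\in\A),
\]
since every element of $\B_1$ has this normal form. Then show that the map is well defined and injective. Surjectivity onto $\B_2$ is clear, and preservation of the Boolean operations is routine from the normal forms.

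The key step is the following equivalence: for $A,A'\in\A$,
\[
(A\wedge B_1)\vee(A'-B_1)=0 \iff (A\wedge B_2)\vee(A'-B_2)=0,
\]
and the same for the comparable statement with an arbitrary difference. Since the algebras are Boolean, well-definedness and injectivity both reduce to showing that $h$ and its symmetric counterpart send $0$ only to $0$. This uses the fact that the symmetric difference of two normal forms is again a normal form. So it suffices to prove:
\[
A\wedge B_1=0 \iff A\wedge B_2=0,
\qquad
A-B_1=0 \iff A-B_2=0
\qquad (A\in\A).
\]

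For the first equivalence, suppose $A\wedge B_1=0$. Then $A\wedge B_1=0\in\A$, so $A\in\J_{\A,B_1}=\J_{\A,B_2}$ by (1). By (2), $A\wedge B_2=A\wedge B_1=0$. The converse is symmetric, using that (1) makes $\J_{\A,B_1}$ and $\J_{\A,B_2}$ the same ideal. For the second equivalence, $A-B_1=0$ means $A\wedge B_1=A\in\A$. Hence $A\in\J_{\A,B_1}=\J_{\A,B_2}$, and $A\wedge B_2=A\wedge B_1=A$ by (2), so $A-B_2=0$. Again the converse is symmetric.

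With these facts the normal-form map is a well-defined injective homomorphism. It fixes $\A$, since $A=(A\wedge B_1)\vee(A-B_1)$ maps to $(A\wedge B_2)\vee(A-B_2)=A$. It sends $B_1=(1\wedge B_1)\vee(0-B_1)$ to $B_2$. Its image is generated by $\A\cup\{B_2\}$, so it is onto $\B_2$.

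The only real subtlety is correctly reducing well-definedness to the vanishing equivalences. If $(A\wedge B_1)\vee(A'-B_1)=(C\wedge B_1)\vee(C'-B_1)$, then $(A\triangle C)\wedge B_1=0$ and $(A'\triangle C')-B_1=0$. The equivalences transfer both identities to $B_2$, and injectivity follows in the same way.
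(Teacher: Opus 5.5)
Your proof is correct and follows the paper's route: both reduce the existence of $h$ to the Sikorski conditions $A\wedge B_1=0\iff A\wedge B_2=0$ and $A-B_1=0\iff A-B_2=0$ for $A\in\A$. Both verify these by noting that either equation places $A$ in $\J_{\A,B_1}=\J_{\A,B_2}$ and then applying (2). The only difference is that you prove the extension criterion directly via the normal form $(A\wedge B_1)\vee(A'-B_1)$, whereas the paper cites Koppelberg's Theorem 5.5.
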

\begin{proof}
To construct $h: \B_1\rightarrow \B_2$
we will use Sikorski's extension criterion (Theorem 5.5 of \cite{koppelberg}, cf. 5.6 of \cite{koppelberg})
which implies that the identity on $\A$ can be extended to  an isomorphism $h$ as 
in the lemma if and only if the following two conditions hold for
all $A\in \A$:
\begin{enumerate}
\item[(i)] $A\wedge B_1=0 \Leftrightarrow A\wedge B_2=0$.
\item[(ii)] $A\wedge(1-B_1)=0 \Leftrightarrow A\wedge(1-B_2)=0$.
\end{enumerate}
First note that $A\wedge B_1=0$ or $A\wedge(1-B_1)=0$ means that 
$A\in \J_{\A, B_1}$.  Then (2) gives us the forward implications.
For the reverse implications, similarly note that
$A\wedge B_2=0$ or $A\wedge(1-B_2)=0$ means that 
$A\in \J_{\A, B_2}$ and again use (2).
\end{proof}

\subsection{C*-algebras}

Our terminology concerning C*-algebras should follow \cite{blackadar}, \cite{murphy} and \cite{ilijas-book}.
Our models of $n$-th dimensional Hilbert spaces are $\ell_2(n)$ and the model of
an infinite dimensional  separable Hilbert space is $\ell_2$.  If $F\subseteq\N$ by $\ell_2(F)$ we denote
the subspace of $\ell_2$ of $F$-supported vectors.
For a  closed subspace $V$ of a Hilbert space $V^\perp$ denotes
its orthogonal complement.
The commutator of $T, S$ from
a C*-algebra is denoted by $[T, S]=TS-ST$. The C*-algebra of all
bounded linear operators on $\ell_2$ ($\ell_2(n)$) is denoted  $\bb$ ($\mathcal B(\ell_2(n))$). 
The ideal of compact operators on $\ell_2$ is denoted $\K$. 
The Calkin algebra $\bb/\K$ is denoted $\QQ$. The quotient
map from $\bb$ onto $\QQ$ is denoted $\pi$. We will often be using 
the following two facts about compact operators:
\begin{itemize}
\item If $A\in \K$ and $(v_n)_{n\in \N}$ is a sequence of unit, pairwise orthogonal vectors in $\ell_2$,
then $\lim_{n\in \N}\|A(v_n)\|=0$.
\item If  $(v_n)_{n\in \N}$ is an orthonormal basis of $\ell_2$ and $A\in \bb$ satisfies
$\sum_{n\in \N}\|A(v_n)\|<\infty$, then $A\in \K$.
\end{itemize}
Both of them follow from the fact that an operator in $\bb$ is compact if 
and only if it can be norm approximated by finite-rank operators. 

If $\F\subseteq\QQ$, then 
$C^*(\F)$  denotes the C*-subalgebra of $\QQ$ generated by $\F$.
All C*-algebras we deal with will be unital. They will be C*-subalgebras
of $\bb$ or $\QQ$ or algebras of the form $C(K)$ of complex-valued
continuous functions on a compact Hausdorff space $K$.
By the Gelfand-Naimark theorem all commutative C*-algebras are $*$-isomorphic 
to some member of the latter class of C*-algebras.
Recall that a $*$-isomorphism of C*-algebras is an isometry (1.3.3 of \cite{ilijas-book}).
An element $S$ of a C*-algebra is self-adjoint (a projection) if $S=S^*$ ($S=S^2=S^*$).
Recall that if $S\in \QQ$ is self-adjoint (a projection), then there is $T\in \bb$ such that $\pi(T)=S$
and $T$ is self-adjoint ($T$ is a projection) (Lemma 2.5.4 and 3.1.13 of \cite{ilijas-book}).
The spectrum of an element $T$ of a unital C*-algebra is denoted by $\sigma(T)$.
Recall that it does not grow when we pass to a quotient,
in particular $\sigma(\pi(T))\subseteq \sigma(T)$ for $T\in \bb$  (Lemma 2.5.7 in \cite{ilijas-book}).

If $P$ is a projection in a unital C*-algebra $\A$, then
$P\A P=\{PTP:T\in\A\}$ is a C*-algebra with unit $P$, called the corner
of $\A$ determined by $P$ (Example 3.2.1 of \cite{murphy}).

If $\E=(e_n)_{n\in \N}$ is an orthonormal basis of $\ell_2$, then 
$\D(\E)$ denotes  the commutative C*-algebra of all operators diagonal with respect to $\E$, i.e.,
such operators that $e_n$ is their eigenvector  for every $n\in \N$. 
Weyl-von Neumann theorem asserts that for every self-adjoint $T\in \bb$
there is an orthonormal basis $\E$ and  $S\in \K$ such that $T+S\in \D(\E)$ (V. 4.1.2 (ii) of \cite{blackadar}).
In fact any countable set of commuting projections can be simultaneously diagonalized
modulo a compact operator (Lemma 12.4.4 of \cite{ilijas-book}). We will need a more specific version of this
in Lemma \ref{calkin-lift}.

\begin{lemma}[Lemma 3.1 of \cite{masas}]\label{commutator} 
Let $T\in \bb$ be 
self-adjoint and $c_1<c_2$ satisfy $c_1, c_2\in \sigma(T)$.
Let $\E=(e_n)_{n\in \N}$ be an orthonormal basis of $\ell_2$.  
Then for every $\varepsilon>0$ there is a finite $F\subseteq \N$,
and a projection  $P\in \bb$ 
 such that
$$\|[P_FTP_F, P]\|>{{c_2-c_1\over2}-\varepsilon}$$
and $\langle P(e_n), e_m\rangle\not=0$ implies $n, m\in F$ for every $n,m\in \N$,
where $P_F$ is the projection in $\bb$ onto the space spanned by $\{e_n: n\in F\}$.
\end{lemma}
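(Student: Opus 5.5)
The plan is to reduce the statement to a computation in finite dimensions. First, $P_FTP_F$ restricted to $\ell_2(F)$ is a self-adjoint matrix; I will show it has eigenvalues $d_1,d_2$ close to $c_1,c_2$. Then $P$ will be the rank-one projection onto the normalized sum of the corresponding eigenvectors, for which the commutator can be computed explicitly.

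\emph{Step 1: finitely supported approximate eigenvectors.} Fix $\delta>0$ small, to be chosen at the end in terms of $\varepsilon$ and $c_2-c_1$. Since $T$ is self-adjoint, its spectrum is its approximate point spectrum. Hence for $i=1,2$ there are unit vectors $u_i$ with $\|Tu_i-c_iu_i\|<\delta$. Vectors with finite support with respect to $\E$ are dense, and $T$ is bounded. So I can replace $u_i$ by unit vectors $v_i$ supported on a finite set $F\subseteq\N$, common to $i=1,2$, with $\|Tv_i-c_iv_i\|<2\delta$. Since $P_Fv_i=v_i$ and $\|P_F\|\le 1$, we get
$$\|P_FTP_Fv_i-c_iv_i\|=\|P_F(Tv_i-c_iv_i)\|<2\delta .$$

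\emph{Step 2: eigenvalues of the finite matrix.} Let $S=P_FTP_F$. It is self-adjoint and maps $\ell_2(F)$, a finite-dimensional space, into itself. For a self-adjoint operator on a finite-dimensional space and a unit vector $v$, we have $\mathrm{dist}(c,\sigma(S))\le\|Sv-cv\|$; this follows by expanding $v$ in an orthonormal eigenbasis. Hence there are eigenvalues $d_1,d_2$ of $S|\ell_2(F)$ with $|d_i-c_i|<2\delta$, so $d_2-d_1>c_2-c_1-4\delta$. Taking $\delta<(c_2-c_1)/4$ makes $d_1\neq d_2$. Therefore the corresponding unit eigenvectors $x_1,x_2\in\ell_2(F)$ are orthogonal.

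\emph{Step 3: the projection and the commutator.} Put $w=(x_1+x_2)/\sqrt2$ and let $P$ be the orthogonal projection onto $\mathbb{C}w$. Since $w\in\ell_2(F)$, we have $\langle P(e_n),e_m\rangle=\langle e_n,w\rangle\langle w,e_m\rangle$, which vanishes unless $n,m\in F$, as required. Next,
$$[S,P]w=Sw-PSw=Sw-\langle Sw,w\rangle w .$$
Here $Sw=(d_1x_1+d_2x_2)/\sqrt2$ and $\langle Sw,w\rangle=(d_1+d_2)/2$, so
$$[S,P]w=\tfrac{d_1-d_2}{2}\cdot\tfrac{x_1-x_2}{\sqrt2}.$$
This vector has norm $(d_2-d_1)/2>(c_2-c_1)/2-2\delta$. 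Choosing $\delta<\varepsilon/2$ (together with the earlier constraint $\delta<(c_2-c_1)/4$) gives $\|[P_FTP_F,P]\|>\frac{c_2-c_1}{2}-\varepsilon$.

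The only point needing care is Step 1: passing from approximate eigenvectors of $T$ to finitely supported ones that are still approximate eigenvectors of the compression $P_FTP_F$. This is handled by the contractivity of $P_F$. Steps 2 and 3 are elementary finite-dimensional linear algebra.
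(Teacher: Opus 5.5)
Your proof is correct and complete. The individual steps all check out:
\begin{itemize}
\item approximate eigenvectors exist because $\sigma(T)=\sigma_{ap}(T)$ for self-adjoint $T$;
\item truncating to a common finite support $F$ costs at most $\delta$, and the estimate passes to the compression because $\|P_F\|\le 1$;
\item the bound $\operatorname{dist}(c,\sigma(S|\ell_2(F)))\le\|Sv-cv\|$ yields distinct eigenvalues $d_1<d_2$ once $\delta<\frac{c_2-c_1}{4}$;
\item the computation $\|[S,P]w\|=\frac{d_2-d_1}{2}$ for the rank-one projection onto $w=(x_1+x_2)/\sqrt2$ is right;
\item the support condition on $P$ holds because $w\in\ell_2(F)$.
\end{itemize}

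The paper does not prove this lemma; it only cites it as Lemma 3.1 of \cite{masas}, so there is no in-paper argument to compare with. Your argument uses nothing about $\ell_2$ beyond a fixed orthonormal basis. It therefore applies verbatim, inside the Cohen extension, to the compression $P_{C_i}\dot TP_{C_i}$ on $\ell_2(C_i)$ with the basis $(e_k)_{k\in C_i}$, which is exactly how the lemma is invoked in the proof of Lemma \ref{substitution}.
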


\subsection{Boolean subalgebras of $\QQ$ and their liftings}

The set of projections  of a $C^*$ algebra $\A$ will be denoted
by $Proj(\A)$. All projections in C*-algebras of the form $C(K)$ for $K$ compact Hausdorff
are  characteristic functions $1_U$ for clopen $U\subseteq K$. The Boolean operations
in the Boolean algebra $Clop(K)$ can be translated to C*-algebraic operations on
the functions $1_U$ for $U\in Clop(K)$ as $1_{U\cap V}=1_U1_V$, $1_{U\cup V}=1_U+1_V-1_{U}1_V$
and $1_{K\setminus U}=1_K-1_U$. By the Gelfand-Naimark theorem this means that 
families of commuting projections in C*-algebras generate Boolean algebras with the
 above operations (see e.g. \cite{bade}, \S 30 of \cite{halmos}):

\begin{definition}\label{ba-projections} If $P, Q$ are commuting projections in a C*-algebra $\A$, then we define
\begin{itemize}
\item $P\wedge Q=PQ$,
\item $-P=I-P$,
\item $P\vee Q=P+Q-PQ$.
\end{itemize}
\end{definition}

Whenever we say that $h:\A\rightarrow \bb$ or $h:\A\rightarrow \QQ$  is a Boolean
homomorphism (monomorphism) we mean that the range of $h$ consists of commuting
projections and it is a Boolean algebra with the operations as in Definition  \ref{ba-projections} and
such that $h(1)$ is the unit of $\bb$ or $\QQ$ and $h(0)$ is the zero of $\bb$ or $\QQ$ respectively. 

\begin{lemma}[Lemma 2.6 of \cite{masas}]\label{cstar-ba-iso} Suppose that the C*-algebra $C^*(\A)$ is generated by a Boolean algebra
of  commuting projections $\A\subseteq\QQ.$  Then $C(K_\A)$ and $C^*(\A)$ are $*$-isomorphic, where $K_\A$
is the Stone space of $\A$.
\end{lemma}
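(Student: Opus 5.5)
The plan is to apply Gelfand duality directly. By the Gelfand--Naimark theorem the commutative unital C*-algebra $C^*(\A)$ is $*$-isomorphic to $C(X)$, where $X$ is its maximal ideal space (the space of characters, with the weak$^*$ topology). It therefore suffices to show that $X$ is homeomorphic to $K_\A$, because a homeomorphism of compact spaces induces a $*$-isomorphism of the corresponding algebras of continuous functions.

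First I would define a map $\Phi: X\rightarrow K_\A$. A character $\phi$ is a $*$-homomorphism into $\mathbb C$, so for every projection $P\in\A$ the number $\phi(P)$ is an idempotent, that is $\phi(P)\in\{0,1\}$. The set $\U_\phi=\{P\in\A:\phi(P)=1\}$ is then an ultrafilter of $\A$ with respect to the operations of Definition \ref{ba-projections}:
\begin{itemize}
\item $\phi(PQ)=\phi(P)\phi(Q)$ shows closure under $\wedge$;
\item $\phi(I-P)=1-\phi(P)$ shows that exactly one of $P$, $-P$ belongs to $\U_\phi$;
\item $\phi(0)=0$ shows that $0\notin\U_\phi$;
\item upward closure follows from $P\le Q\Rightarrow P=PQ$.
\end{itemize}
So I would set $\Phi(\phi)=\U_\phi$, viewed as a point of the Stone space.

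Next I would check the properties of $\Phi$.
\begin{itemize}
\item \emph{Injective.} A character is determined by its values on a generating set, because it is continuous and multiplicative. Since $\A$ generates $C^*(\A)$, the values on $\A$ determine $\phi$.
\item \emph{Continuous.} The preimage of a basic clopen set $\{\U:P\in\U\}$ is $\{\phi:\phi(P)=1\}=\{\phi:\phi(P)>1/2\}$, which is weak$^*$ open.
\item \emph{Bijection and homeomorphism.} Once surjectivity is known, $\Phi$ is a continuous bijection from a compact space onto a Hausdorff space, hence a homeomorphism.
\end{itemize}

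The main obstacle is surjectivity: given an ultrafilter $\U$ of $\A$, one must produce a character $\phi$ with $\phi(P)=1_{\U}(P)$ for $P\in\A$. I would first define $\phi$ on the $*$-algebra $\mathrm{span}(\A)$ of finite linear combinations. Every such element can be written as $\sum_i c_iP_i$ with the $P_i$ pairwise orthogonal, nonzero, and in $\A$, by refining to the atoms of the finite Boolean subalgebra generated by the finitely many projections involved. Put $\phi(\sum_i c_iP_i)=c_{i_0}$, where $P_{i_0}$ is the unique atom of that partition lying in $\U$, or $0$ if none does. This is well defined, linear and multiplicative by routine refinement arguments.

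The key estimate for extending $\phi$ is
$$\Bigl\|\sum_i c_iP_i\Bigr\|=\max_i|c_i|,$$
which holds because the $P_i$ are nonzero, pairwise orthogonal, commuting projections in $\QQ$. It gives $|\phi(T)|\le\|T\|$, so $\phi$ extends by continuity to the norm closure of $\mathrm{span}(\A)$. That closure is a C*-algebra containing $\A$, hence equals $C^*(\A)$, and the extension is a character with $\Phi(\phi)=\U$.

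An alternative route is to let $\Psi:C(K_\A)\rightarrow C^*(\A)$ send $1_{\hat P}\mapsto P$, extend it linearly to simple functions using the same isometry estimate, and then extend to the uniform closure by density. By Stone--Weierstrass the simple functions are dense in $C(K_\A)$, and the image is $C^*(\A)$.
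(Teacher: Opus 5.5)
Your proof is correct and follows the route the paper indicates: the paper gives no argument of its own beyond citing Lemma 2.6 of \cite{masas} and invoking the Gelfand--Naimark theorem, and identifying the character space of $C^*(\A)$ with the ultrafilter space of $\A$ via $\phi\mapsto\{P\in\A:\phi(P)=1\}$ is the standard way to carry that out. (Surjectivity could be obtained slightly faster by compactness: the clopen sets $\{\phi:\phi(P)=1\}$ for $P\in\U$ have the finite intersection property, because each finite intersection corresponds to the nonzero projection $P_1\cdots P_n$, whose norm $1$ is attained by some character. Your direct extension argument via $\|\sum_i c_iP_i\|=\max_i|c_i|$ is also fine.)
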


\begin{lemma}[Lemma 2.9 of \cite{masas}]\label{calkin-lift}
Suppose that $\A\subseteq \QQ$ is a countable Boolean algebra of projections.
Then there are an orthonormal basis $\E$ of $\ell_2$ and a Boolean subalgebra
$\B$ of $\D(\E)$ such that
$$
\pi|\B:\B\longrightarrow\A
$$
is a Boolean isomorphism.
\end{lemma}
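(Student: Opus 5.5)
The plan is to enumerate $\A$ and build the lift by induction along the enumeration. Write $\A=\{A_k:k\in\N\}$ with $A_0=I$, and let $\A_k$ be the Boolean algebra generated by $A_0,\dots,A_k$. Each $\A_k$ is finite, with atoms $Q^k_1,\dots,Q^k_{m_k}$, which are pairwise orthogonal projections in $\QQ$ summing to $I$. Every element of $\A$ is a finite join of atoms of some $\A_k$. So it suffices to produce, for each $k$, pairwise disjoint sets $F^k_1,\dots,F^k_{m_k}\subseteq\N$ partitioning $\N$, together with one fixed orthonormal basis $\E=(e_n)_{n\in \N}$, such that:
\begin{itemize}
\item $\pi(P_{F^k_j})=Q^k_j$, where $P_F$ is the diagonal projection onto $\overline{\mathrm{span}}\{e_n:n\in F\}$;
\item the partitions refine each other, meaning that if $Q^{k+1}_i\le Q^k_j$ then $F^{k+1}_i\subseteq F^k_j$.
\end{itemize}
Given this, set $\B$ to be the algebra generated by all the $P_{F^k_j}$. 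Then $\B\subseteq\D(\E)$, $\pi|\B$ is a Boolean homomorphism onto $\A$, and it is injective. Injectivity holds because a nonzero atom $Q^k_j$ forces $F^k_j$ to be infinite, which in turn makes $P_{F^k_j}$ noncompact.

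The difficulty is that the partitions must be built with respect to a single basis fixed in advance, while the lifts change at each stage. I would first pick, by induction, actual projections rather than index sets. Start with lifts $R^0_j$ of the atoms. At stage $k+1$, each atom $Q^k_j$ splits as $Q^k_jA_{k+1}$ and $Q^k_j(I-A_{k+1})$. Take any projection lift $S$ of $A_{k+1}$. The operator $R^k_jSR^k_j$ is a self-adjoint element of the corner $R^k_j\bb R^k_j$ whose image under $\pi$ is a projection. Applying the continuous functional calculus with a function equal to $0$ near $0$ and $1$ near $1$, restricted to $\mathrm{ran}(R^k_j)$, gives a genuine projection below $R^k_j$ that lifts $Q^k_jA_{k+1}$. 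This yields nested, commuting families of projections $R^k_j$ lifting the atoms, with $\sum_j R^k_j=I$ exactly for each $k$. The defect is that the nesting holds exactly, but the sequence is infinite, so the projections are not yet simultaneously diagonal.

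The main obstacle is to diagonalize all $R^k_j$ in one basis. The nesting alone does not settle this, because the refinement is infinite and the intersection structure can be wild. The fix is a compact perturbation performed on finite blocks, in the style of Lemma 12.4.4 of \cite{ilijas-book}. Fix an orthonormal basis $(f_n)$. Choose finite-dimensional subspaces $V_0\subseteq V_1\subseteq\cdots$ with dense union. Arrange that $V_{k}$ contains $f_0,\dots,f_k$ and is, up to error $2^{-k}$, invariant under $R^k_1,\dots,R^k_{m_k}$, and hence under the earlier $R^i_j$ as well, since those are sums of the later ones. Then decompose each difference $V_{k}\ominus V_{k-1}$ as an orthogonal sum of subspaces $W^{k}_j$ that approximate the ranges of the $R^k_j$ there, and pick orthonormal bases of these subspaces. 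The union of these bases forms $\E$. Each $F^k_j$ then consists of the indices of basis vectors in blocks $\ge k$ lying in some $W^{l}_i$ with $i$ below $j$, adjusted on finitely many early blocks so that the sets partition $\N$. The refinement condition holds automatically by construction. The remaining task is to estimate $\|P_{F^k_j}-R^k_j\|$ restricted to block $l$ by a summable quantity such as $2^{-l}$. This makes the difference compact, so that $\pi(P_{F^k_j})=Q^k_j$. Verifying this almost-invariance estimate, which is a standard compression argument, is the step I expect to require the most care.
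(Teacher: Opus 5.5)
\textbf{Verdict.} Your proposal is correct in outline, subject to the repairs listed in the second paragraph. It runs in the opposite order to the route the paper points to.

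\textbf{Comparison with the paper.} The paper does not reprove this lemma. It quotes Lemma 2.9 of \cite{masas} and describes it as a more specific version of Lemma 12.4.4 of \cite{ilijas-book}. That suggests diagonalizing first and fixing the Boolean structure afterwards. One finds $\E$ with $\A\subseteq\pi[\D(\E)]$, whose projections form a copy of $\wp(\N)/Fin$. One then lifts the countable subalgebra $\A$ to an isomorphic subalgebra of $\wp(\N)$ by refining partitions of $\N$, where refining is just intersecting sets and discarding finite errors. You instead make the Boolean structure exact first, inside $\bb$, via nested partitions of unity $\{R^k_j\}_j$ obtained by lifting subprojections in corners, and diagonalize afterwards. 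The paper's order is short because all the operator theory sits in the cited lemma. Yours is self-contained and in effect reproves that lemma.

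\textbf{Simplification.} Because your nesting is exact, the step you expect to be delicate needs no estimates. Put $V_l=\sum_i R^l_i\bigl(V_{l-1}+\mathrm{span}\{f_0,\dots,f_l\}\bigr)$. This space is finite-dimensional and contains $V_{l-1}$, since $v=\sum_iR^l_iv$. It is also exactly invariant under every $R^k_j$ with $k\le l$. Hence $W_l=V_l\ominus V_{l-1}$ splits exactly as $\bigoplus_i R^{l-1}_iW_l$. Take $\E$ to be a union of orthonormal bases of these pieces and of $V_0$. Then each $R^k_j$ is reduced by $V_k$ and is diagonal on $V_k^\perp$, so $R^k_j-P_{F^k_j}$ has finite rank.

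\textbf{Repairs needed.}
\begin{enumerate}
\item Block indexing. As written, you split $V_k\ominus V_{k-1}$ according to level $k$. But $V_{k-1}$ was only made almost invariant under levels $\le k-1$, so the compression of $R^k_j$ to $V_k\ominus V_{k-1}$ can be far from a projection. For example, let $V_{k-1}$ be spanned by $(e_a+e_b)/\sqrt2$, let $V_k=\mathrm{span}\{e_a,e_b\}$, and let $R^k_j$ project onto $e_a$; the compression is $\frac12$. Use level $k-1$ on that block and build $F^k_j$ from blocks $l\ge k+1$.
\item Off-diagonal control. If you keep an approximate version, blockwise estimates alone do not give compactness. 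You need $\sum_l\|[R^k_j,P_{V_l}]\|<\infty$ to control the off-diagonal parts. The tolerance at level $k$ must also absorb a factor $m_k$ when passing to coarser levels.
\item Functional calculus. The function must be $\{0,1\}$-valued on $\sigma(R^k_jSR^k_j)$. So take the threshold in a spectral gap inside $(0,1)$. Such a gap exists because the image of $R^k_jSR^k_j$ in the corner $\pi(R^k_j)\QQ\pi(R^k_j)$ is a projection, so the spectrum in $(0,1)$ consists of isolated eigenvalues.
\item Zero atoms. When $Q^k_jA_{k+1}$ or $Q^k_j(I-A_{k+1})$ is $0$, you must not split $R^k_j$. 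Otherwise you create a nonzero finite-rank piece lifting $0$, and the partition no longer matches the atoms of $\A_{k+1}$.
\end{enumerate}
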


\begin{lemma}[Lemma 2.10 of \cite{masas}]\label{injective-Q} Suppose that $\A, \B$ are countable Boolean algebras with $\A\subseteq \B$.
Let $h: \A\rightarrow Proj(\QQ)$ be a Boolean monomorphism. Then there is
an extension $h'$ of $h$ to a Boolean monomorphism  $h':\B\rightarrow Proj(\QQ)$.
\end{lemma}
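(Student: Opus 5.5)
The plan is to reduce to a one-step extension, move the problem into $\wp(\N)/Fin$ via a diagonal lift, and then build the new projection by a diagonal construction.

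\emph{Reduction to simple extensions.} Since $\B$ is countable, enumerate $\B=\{b_k:k\in\N\}$. Let $\A_0=\A$ and let $\A_{k+1}$ be the subalgebra generated by $\A_k\cup\{b_k\}$; each $\A_{k+1}$ is either $\A_k$ or a simple extension of it. If I can extend a Boolean monomorphism across one simple extension, then by induction I get a chain of monomorphisms $h_k:\A_k\to Proj(\QQ)$ with $h_{k+1}|\A_k=h_k$, and their union is the desired $h'$. Note that the union has commuting range, because every pair of elements lies in some $\A_k$. So it suffices to treat the case where $\B$ is a simple extension of $\A$ by some $B$.

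\emph{Passing to $\wp(\N)/Fin$.} Apply Lemma \ref{calkin-lift} to the countable Boolean algebra $h[\A]\subseteq\QQ$. This gives an orthonormal basis $\E=(e_n)$ and a Boolean algebra of diagonal projections lifting $h[\A]$ isomorphically. Diagonal projections are the $P_X$ for $X\subseteq\N$, so I obtain a Boolean homomorphism $g:\A\to\wp(\N)$ with $\pi(P_{g(a)})=h(a)$. Since $h$ is injective and $\pi(P_X)=0$ exactly when $X$ is finite, $g(a)$ is infinite for every $a\neq0$. It then suffices to find $X\subseteq\N$ such that
$$X\cap g(a)=^* g(a\wedge B)\ \text{ for } a\in\J_{\A,B}\quad\text{and}\quad X\cap g(a),\ g(a)\setminus X\ \text{infinite for } a\in\A\setminus\J_{\A,B}.$$
Given such $X$, set $h'(B)=\pi(P_X)$. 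This projection commutes with $h[\A]$ because everything is diagonal. Sikorski's criterion, as in the proof of Lemma \ref{sikorski}, then shows that $h$ extends to a homomorphism on $\B$ with this value at $B$. The criterion is checked as follows: if $a\wedge B=0$ or $a\wedge(1-B)=0$ then $a\in\J_{\A,B}$, and the first clause forces the corresponding image to vanish; conversely, for $a\notin\J_{\A,B}$ the second clause prevents that. Injectivity of the extension follows from the same case analysis.

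\emph{Constructing $X$.} Enumerate $\J_{\A,B}=\{j_n\}$ and $\A\setminus\J_{\A,B}=\{a_n\}$. The key observation is that for $a\notin\J_{\A,B}$ and any $j\in\J_{\A,B}$, the element $a-j$ is nonzero, indeed $a-j\notin\J_{\A,B}$ since the ideal is closed under joins. Hence $g(a_k)\setminus(g(j_0)\cup\dots\cup g(j_n))$ is infinite for all $k,n$. At stage $n$, choose fresh points $p^n_k\neq q^n_k$ in $g(a_k)\setminus\bigcup_{i\le n}g(j_i)$ for each $k\le n$, distinct from all previously chosen points. Then put
$$X=\bigcup_n\Bigl(g(j_n\wedge B)\setminus\bigcup_{i<n}g(j_i)\Bigr)\cup\{p^n_k:k\le n,\ n\in\N\}.$$
The $q^n_k$ are excluded; this is consistent because $g(j_n\wedge B)\setminus\bigcup_{i<n}g(j_i)\subseteq g(j_n)$, while $q^n_k$ is chosen outside $g(j_i)$ for $i\le n$, and for $i>n$ only finitely many such points fall into $g(j_i)$.

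\emph{Verification, and the main obstacle.} The step I expect to need the most care is checking $X\cap g(j_m)=^*g(j_m\wedge B)$. Within $g(j_m)$, the pieces coming from $j_n$ with $n>m$ miss $g(j_m)$ entirely. The pieces with $n\le m$ agree with $g(j_m\wedge B)$ on their overlaps with $g(j_m)$, because $g(j_n\wedge B)\cap g(j_m)=g(j_n\wedge j_m\wedge B)=g(j_n)\cap g(j_m\wedge B)$. The chosen points $p^n_k$ or $q^n_k$ landing in $g(j_m)$ all have $n<m$, so there are only finitely many of them. Finally, the splitting condition holds because each $a_k$ receives infinitely many $p$'s inside $X$ and infinitely many $q$'s outside $X$.
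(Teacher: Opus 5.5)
Your overall strategy is the natural one, and your Sikorski and injectivity bookkeeping is correct. You reduce to simple extensions, lift $h[\A]$ to diagonal projections via Lemma \ref{calkin-lift} so that the task becomes countable injectivity of $\wp(\N)/Fin$, and then apply Sikorski's criterion. The paper itself only quotes this lemma from earlier work, so there is no proof here to compare against.

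\textbf{The gap.} The construction of $X$ has a genuine error: it is false that the points $q^n_k$ automatically stay outside $X$.

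\begin{itemize}
\item Your justification only covers the pieces $g(j_i\wedge B)\setminus\bigcup_{l<i}g(j_l)$ with $i\le n$.
\item For $i>n$, knowing that only finitely many chosen points lie in $g(j_i)$ does not stop $q^n_k$ itself from lying in such a later piece.
\item Since $g(j\wedge B)\cap g(j')=g(j)\cap g(j'\wedge B)$ for $j,j'\in\J_{\A,B}$, your first summand equals $\bigcup_n g(j_n\wedge B)$. This can be all of $\N$.
\end{itemize}

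Here is a concrete instance. Let $\A$ be the clopen algebra of the Cantor set $C$ and fix $x_0\in C$. Let $\B$ be the clopen algebra of $C\sqcup\{*\}$, where $a\in\A$ is identified with $a\cup\{*\}$ if $x_0\in a$ and with $a$ otherwise. Let $B=C$. Then $\J_{\A,B}$ is the ideal of clopens missing $x_0$, and $j\wedge B=j$ for every $j\in\J_{\A,B}$.

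Take $g(a)=a\cap D$ for a countable dense $D\subseteq C\setminus\{x_0\}$ identified with $\N$. Then $\bigcup_n g(j_n\wedge B)=D=\N$, so $X=\N$ however the $p^n_k,q^n_k$ are chosen. Consequently $h'(1-B)=0$ although $1-B=\{*\}\neq 0$, and the splitting clause fails for every $a\notin\J_{\A,B}$.

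\textbf{The fix.} Remove the $q$'s explicitly:
\[
X=\Bigl(\bigcup_n\bigl(g(j_n\wedge B)\setminus\bigcup_{i<n}g(j_i)\bigr)\setminus\{q^n_k:k\le n,\ n\in\N\}\Bigr)\cup\{p^n_k:k\le n,\ n\in\N\}.
\]
Your verification of $X\cap g(j_m)=^*g(j_m\wedge B)$ goes through unchanged. This is because $g(j_m)$ contains only the finitely many $p^n_k,q^n_k$ with $n<m$, which is exactly what your last paragraph already uses. Now each $a_k$ has infinitely many $p^n_k\in X\cap g(a_k)$ and infinitely many $q^n_k\in g(a_k)\setminus X$. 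With this change the proof is complete.
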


\begin{definition}\label{def-split}Suppose that $\A\subseteq \QQ$ is a Boolean algebra of projections
and $T\in \bb$ is self-adjoint such that $\pi(T)$ commutes with all elements of
$\A$. Let $\U$ be an ultrafilter of $\A$. We say that $T$ C*-splits $\U$ if there are
two distinct multiplicative states  $x_1, x_2$ of $C^*(\A\cup\pi(T))$  such that
$$\{A\in \A: x_1(A)=1\}=\{A\in \A: x_2(A)=1\}=\U.$$
\end{definition}

\begin{lemma}[Lemma 2.12 of \cite{masas}]\label{split-point} Suppose that $\A\subseteq \QQ$ is a Boolean algebra of projections
and $T\in \bb$ is self-adjoint such that $\pi(T)$ commutes with all elements of
$\A$ but $\pi(T)\not\in C^*(\A)$. Then there is an ultrafilter $\U$ 
 of $\A$ which is $C^*$-split by $T$.
\end{lemma}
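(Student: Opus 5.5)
We need to prove Lemma 2.12: if $\pi(T)$ commutes with $\A$ but $\pi(T)\notin C^*(\A)$, then some ultrafilter $\U$ of $\A$ is C*-split by $T$. The plan is to argue by contraposition using Gelfand duality. Let $\CC=C^*(\A\cup\{\pi(T)\})$. It is commutative, since $\pi(T)$ is self-adjoint and commutes with the commuting projections of $\A$. Let $L$ be its spectrum, the space of multiplicative states. By Lemma \ref{cstar-ba-iso}, the spectrum of $C^*(\A)$ is $K_\A$.

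First, restriction of multiplicative states gives a continuous map $\rho: L\to K_\A$. It sends $x\in L$ to the ultrafilter $\{A\in\A: x(A)=1\}$. This set is an ultrafilter: each projection takes value $0$ or $1$ under $x$, $x(A\wedge B)=x(A)x(B)$, and $x(I-A)=1-x(A)$. The map $\rho$ is surjective, because every multiplicative state of $C^*(\A)$ extends to some multiplicative state of $\CC$. To see this, extend it to a state by Hahn--Banach; the set of such extensions is a weak$^*$ compact face, and its extreme points are pure states of the commutative algebra $\CC$, hence multiplicative. Alternatively, surjectivity follows directly from the fact that an injective $*$-homomorphism $C(K_\A)\to C(L)$ is dual to a continuous surjection.

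Now suppose no ultrafilter is C*-split by $T$. Then $\rho$ is injective, hence a continuous bijection between compact Hausdorff spaces, hence a homeomorphism. Consequently the inclusion $C^*(\A)\subseteq \CC$ corresponds under Gelfand transforms to the map $f\mapsto f\circ\rho$ from $C(K_\A)$ onto $C(L)$. This map is surjective because $\rho$ is a homeomorphism. Therefore $C^*(\A)=\CC\ni\pi(T)$, contradicting the hypothesis. So some $\U$ has two distinct preimages $x_1,x_2$ under $\rho$, and these witness the C*-splitting of Definition \ref{def-split}.

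I expect the main obstacle to be the step identifying the inclusion with composition by $\rho$. This requires checking that the Gelfand transform of an element $a\in C^*(\A)$, computed in $\CC$, equals (its transform in $C^*(\A)$)$\circ\rho$. That identity holds by the definition of $\rho$ as restriction. Surjectivity of $\rho$ is actually not needed for the contradiction. What is needed is only that $\rho$ is injective, so that $\rho$ is a homeomorphism onto its image $\rho[L]$. Then the Stone--Weierstrass theorem applies: the algebra $\{f\circ\rho: f\in C(K_\A)\}$ separates the points of $L$ and is a closed self-adjoint subalgebra containing the constants, so it is all of $C(L)$.
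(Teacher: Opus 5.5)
Your proof is correct. The paper does not reprove this lemma; it cites \cite{masas}, where the standard argument is the one you give: if no ultrafilter is C*-split, the restriction map from the spectrum of $C^*(\A\cup\{\pi(T)\})$ to $K_\A$ is injective, so $C^*(\A)$ separates points and Stone--Weierstrass yields $\pi(T)\in C^*(\A)$. As you note yourself, the surjectivity of $\rho$ (and the Hahn--Banach/face argument for it) is not needed.
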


\begin{lemma}[Lemma 2.13 of \cite{masas}]\label{c-split}Suppose that $\A\subseteq \QQ$
 is a Boolean algebra of projections
and $T\in \bb$ is self-adjoint such that $\pi(T)$ commutes with all elements of
$\A$. Suppose that  $\U$ is an ultrafilter of $\A$ which is C*-split by $T$.
Then there are reals $c_1<c_2$ such that
$$
c_1,c_2\in\sigma_{A\QQ A}(A\pi(T)A)
$$
for every $A\in\U$, where the spectrum is computed in the corner
$A\QQ A$, whose unit is $A$.
\end{lemma}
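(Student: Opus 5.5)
We prove Lemma \ref{c-split} by passing to the commutative algebra $C^*(\A\cup\{\pi(T)\})$ and using Gelfand duality. Since $\pi(T)$ is self-adjoint and commutes with every element of $\A$, the algebra $\CC=C^*(\A\cup\{\pi(T)\})$ is a unital commutative C*-subalgebra of $\QQ$. Let $X$ be its Gelfand spectrum, i.e.\ the space of multiplicative states.

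Let $x_1\neq x_2$ be the two multiplicative states given by the C*-splitting, both restricting to $\U$ on $\A$. Put $c_i=x_i(\pi(T))$; these are real because $\pi(T)$ is self-adjoint. We claim $c_1\neq c_2$. Otherwise $x_1$ and $x_2$ would agree on $\A$ and on $\pi(T)$. Being $*$-homomorphisms, they would then agree on the $*$-algebra generated by these elements, and by continuity on all of $\CC$, contradicting $x_1\neq x_2$. Relabel so that $c_1<c_2$.

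Now fix $A\in\U$ and consider the corner $A\CC A=A\CC$. This is a commutative C*-algebra with unit $A$, and it sits inside the corner $A\QQ A$. Each $x_i$ satisfies $x_i(A)=1$, so $x_i$ restricted to $A\CC$ is a nonzero multiplicative functional, i.e.\ a character of $A\CC$ sending the unit $A$ to $1$. It follows that $x_i(A\pi(T)A)=x_i(A)^2c_i=c_i$. In a unital commutative C*-algebra, the values of characters on an element lie in its spectrum. Hence $c_1,c_2\in\sigma_{A\CC A}(A\pi(T)A)$.

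Finally we need $c_1,c_2\in\sigma_{A\QQ A}(A\pi(T)A)$, that is, spectrum computed in the larger corner. Spectrum is invariant under passing between a unital C*-subalgebra and a C*-algebra containing it with the same unit. Here $A\CC A\subseteq A\QQ A$ and both have unit $A$, so $\sigma_{A\CC A}(A\pi(T)A)=\sigma_{A\QQ A}(A\pi(T)A)$. This gives the conclusion for every $A\in\U$.

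The only point needing care is this spectral permanence step: one must check that $A\CC A$ is genuinely a unital C*-subalgebra of $A\QQ A$ sharing the unit $A$. This holds because $A$ is a projection lying in $\CC$. With that in place the remaining steps are routine, so we do not expect a real obstacle.
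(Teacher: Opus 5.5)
Your proof is correct. The two characters agree on $\A$, since a character takes only the values $0,1$ on projections and so the condition involving $\U$ determines $x_i|\A$; this is the one step you leave implicit. Hence they must differ on $\pi(T)$. For each $A\in\U$ they restrict to characters of the unital algebra $A\CC\subseteq A\QQ A$, and spectral permanence then gives $c_1,c_2\in\sigma_{A\QQ A}(A\pi(T)A)$.

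The paper does not reprove this lemma; it only cites it from \cite{masas}. Your argument is the standard Gelfand-duality argument for it.
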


\subsection{Masas in $\bb$ and in $\QQ$}

A masa in a C*-algebra $\A$ is a maximal, with respect to inclusion,
commutative C*-subalgebra of $\A$. Two C*-subalgebras $\A$ and $\B$ of a unital C*-algebra $\CC$ are called
unitarily equivalent if there is a unitary $U\in\CC$ such that
$\B=U\A U^*$ (cf. Section 12.3 of \cite{ilijas-book}).

If $\A\subseteq\QQ$ is a C*-subalgebra, we say that $\A$ has a commutative lift to $\bb$
if there is a commutative C*-subalgebra $\B\subseteq\bb$ such that
$\pi[\B]=\A$ (cf. Section 12.4 of \cite{ilijas-book}).

We recall the description of masas of $\bb$ (see Proposition C.6.11 of \cite{ilijas-book}). If $\mu$ is a measure, then
$L_\infty(\mu)$ will be considered as the algebra of multiplication operators
on $L_2(\mu)$. A C*-subalgebra $\M\subseteq\bb$ is a masa of $\bb$ if and only if
there are a separable measure $\mu$ and a unitary operator
$$
U:\ell_2\longrightarrow L_2(\mu)
$$
such that
$$
\M=\{U^*M_fU:f\in L^\infty(\mu)\},
$$
where $M_f$ denotes the operator of multiplication by $f$ on $L_2(\mu)$.

Using the decomposition of a separable measure into its atomic and nonatomic
parts, every masa of $\bb$ is therefore unitarily equivalent to one of the
following forms:
\begin{enumerate}
\item[(1)] an atomic masa $\D(\E)$ for some orthonormal basis $\E$ of $\ell_2$;
\item[(2)] the nonatomic masa $L_\infty([0,1])$ acting by multiplication
on $L_2([0,1])$;
\item[(3)] a direct sum of an atomic masa and the nonatomic masa,
acting on an orthogonal decomposition
$$
\ell_2=\HH_a\oplus\HH_c.
$$
\end{enumerate}
Here the atomic summand may have any nonzero finite or countably infinite dimension.
In particular, every masa of $\bb$ is generated
by projections.

We will often use a theorem of Johnson and Parrott (see \cite{parrott}) which says that
if $\A\subseteq\bb$ is a masa of $\bb$, then $\pi[\A]$ is a masa of $\QQ$.
While constructing masas of $\QQ$ the following lemma will be very useful:

\begin{lemma}[Lemma 2.14 of \cite{masas}, cf. p. 72 of \cite{ss}]
\label{sa-masa}
If $\A$ is a commutative C*-algebra of $\QQ$, then
$\A$ is a masa of $\QQ$ if and only if for every self-adjoint
$S\in\QQ\setminus\A$ there is $R\in\A$ which does not commute with $S$.
\end{lemma}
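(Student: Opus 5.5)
The proof checks each direction directly from the definition of a masa as a maximal commutative C*-subalgebra of $\QQ$. In both directions the key point is that commutativity of a C*-algebra can be tested on self-adjoint generators.

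For the forward implication, assume $\A$ is a masa of $\QQ$. Let $S\in\QQ\setminus\A$ be self-adjoint, and suppose, towards a contradiction, that $S$ commutes with every $R\in\A$. Consider $C^*(\A\cup\{S\})$, the C*-subalgebra of $\QQ$ generated by $\A$ and $S$. Its generating set $\A\cup\{S\}$ is closed under adjoints, because $\A$ is a C*-algebra and $S=S^*$. Any two elements of this set commute: elements of $\A$ commute among themselves, and $S$ commutes with $\A$ and with itself. Hence the $*$-algebra generated by $\A\cup\{S\}$ consists of polynomials in pairwise commuting elements and is commutative. Its norm closure $C^*(\A\cup\{S\})$ is then commutative as well, since multiplication is continuous. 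This algebra properly contains $\A$ because $S\notin\A$, which contradicts maximality. So some $R\in\A$ does not commute with $S$.

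For the reverse implication, assume every self-adjoint $S\in\QQ\setminus\A$ fails to commute with some $R\in\A$. Suppose $\A$ is not a masa, so there is a commutative C*-subalgebra $\B\subseteq\QQ$ with $\A\subsetneq\B$. Pick $T\in\B\setminus\A$ and split it into its real and imaginary parts:
$$S_1=\frac{T+T^*}{2},\qquad S_2=\frac{T-T^*}{2i}.$$
Both are self-adjoint, both lie in $\B$ (which is closed under adjoints and linear combinations), and $T=S_1+iS_2$. If both $S_1$ and $S_2$ were in $\A$, then $T$ would be in $\A$, so at least one of them, call it $S$, is a self-adjoint element of $\QQ\setminus\A$. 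Since $S\in\B$, $\A\subseteq\B$ and $\B$ is commutative, $S$ commutes with every element of $\A$. This contradicts the hypothesis, so $\A$ is a masa.

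None of these steps is a real obstacle. The only points needing care are that the generated C*-algebra in the first direction is commutative (handled by continuity of multiplication on the closure) and the real/imaginary-part decomposition in the second.
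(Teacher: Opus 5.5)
Your proof is correct. The forward direction uses the commutative algebra $C^*(\A\cup\{S\})$, and the reverse direction splits an element of a larger commutative algebra into self-adjoint real and imaginary parts; this is the standard elementary argument. The paper gives no proof of its own here, citing Lemma 2.14 of \cite{masas} (cf.\ p.~72 of \cite{ss}), and your argument is presumably the routine one behind that citation, so there is nothing to add.
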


\begin{lemma}[Lemma 2.15 of \cite{masas}]\label{masa-masaQQ} Suppose that $\A\subseteq\QQ$ is a masa of $\QQ$ which has a commutative lift.
Then $\A$ is the image under $\pi$ of a masa of $\bb$.  
\end{lemma}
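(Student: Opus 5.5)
The plan is to enlarge the given commutative lift to a masa of $\bb$ and then use the maximality of $\A$ in $\QQ$ to see that nothing new appears in the image.

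First, fix a commutative C*-subalgebra $\B\subseteq\bb$ with $\pi[\B]=\A$, as the definition of a commutative lift provides. Consider the family of all commutative C*-subalgebras of $\bb$ containing $\B$, ordered by inclusion. It is nonempty. For a chain in this family, the norm closure of its union is again a commutative C*-subalgebra: commutativity of products passes to norm limits, and so does the C*-structure. So Zorn's lemma gives a maximal element $\M$. This $\M$ is a masa of $\bb$, because any commutative C*-subalgebra containing $\M$ also contains $\B$, and hence equals $\M$.

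Next, consider $\pi[\M]$. Since $\pi$ is a $*$-homomorphism, $\pi[\M]$ is a commutative $*$-subalgebra of $\QQ$. It is norm closed, since the image of a C*-algebra under a $*$-homomorphism is closed (the induced map $\M/(\M\cap\K)\to\QQ$ is an injective $*$-homomorphism, hence isometric, cf. 1.3.3 of \cite{ilijas-book}). So $\pi[\M]$ is a commutative C*-subalgebra of $\QQ$, and it contains $\pi[\B]=\A$. Because $\A$ is a masa of $\QQ$, we get $\pi[\M]=\A$. Thus $\A$ is the image under $\pi$ of the masa $\M$ of $\bb$. The theorem of Johnson and Parrott is not needed in this direction.

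The only point needing care is the closedness of $\pi[\M]$, which the isometry of injective $*$-homomorphisms settles. Everything else is a routine Zorn argument.
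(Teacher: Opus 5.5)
Your proof is correct. Extending the commutative lift $\B$ to a masa $\M$ of $\bb$ by Zorn's lemma, and then applying the maximality of $\A$ to the commutative C*-algebra $\pi[\M]\supseteq\A$, is the standard argument for this fact, which the paper does not reprove but only cites as Lemma 2.15 of \cite{masas}. (The closedness of $\pi[\M]$ is not even needed: its closure is a commutative C*-subalgebra containing $\A$, so maximality gives $\A=\pi[\B]\subseteq\pi[\M]\subseteq\A$ directly.)
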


\subsection{Operators in the Cohen extensions}

For a nonempty set $S$, let
$$
\PP_S=\{f\in \{0,1\}^A: A\in [S\times\N]^{<\omega}\}
$$
ordered by the reverse inclusion. Thus $\PP_S$ is the forcing adding a family of Cohen reals indexed by $S$. 
If $S$ is nonempty and countable, then $\PP_S$
 is isomorphic to the forcing adding one Cohen real. If $S\subseteq\kappa$, then
$\PP_\kappa$ is isomorphic to $\PP_S\times\PP_{\kappa\setminus S}$.
We write $\PP=\PP_{\{0\}}$ for the forcing adding one Cohen real.

We will use the fact that every name for 
a real in a $\PP_\kappa$-extension depends on only countably many coordinates. 
Consequently, if $G\subseteq\PP_\kappa$ is generic and $x\in V[G]$ is a real, 
then there is a countable $S\subseteq\kappa$ such that
$
x\in V[G\cap\PP_S].
$

We now explain how ground-model operators are interpreted in forcing extensions. Fix an orthonormal basis
$$
\E=(e_n)_{n\in\N}
$$
of $\ell_2$. If $T\in\bb$ belongs to the ground model, let
$$
t_{m,n}=\langle T(e_n),e_m\rangle
$$
for $m,n\in\N$. In every forcing extension the same matrix $(t_{m,n})_{m,n\in\N}$ 
determines a unique bounded operator on the $\ell_2$ of the extension. 
Indeed, it first determines the operator on the finite linear 
combinations of elements of $\E$ with rational complex coefficients, and the ground-model estimate
$$
\left\|T\left(\sum_{n\in F}a_ne_n\right)\right\|
\leq
\|T\|\left\|\sum_{n\in F}a_ne_n\right\|
$$
for finite $F\subseteq\N$ and rational complex numbers $a_n$ remains valid in every forcing extension. 
Since such vectors form a dense subset of the $\ell_2$ of the extension, 
this operator extends uniquely to all of $\ell_2$, with the same norm.
Clearly this extension does not depend on the choice of the orthonormal basis used in its construction. 
Whenever $T\in\bb$ belongs to the ground model, we use $\check T$ for the canonical name
 of this basis-independent extension. Thus, in expressions such as
$$
p\forces[\dot S,\check T]\notin\K,
$$
the symbol $\check T$ does not denote merely the ground-model function
 $T$ with its old domain. It denotes the unique bounded operator on the $\ell_2$ 
of the extension which extends the ground-model operator $T$.

If $P\in\bb$ is a projection in the ground model, then
$$
\PP\forces\check P^2=\check P=\check P^*,
$$
so $\PP$ forces that $\check P$ is a projection. This follows from the uniqueness 
of the continuous extensions of the operators appearing in the equalities $P^2=P=P^*$. If $K\in\K$, then
$$
\PP\forces\check K\in\K.
$$
Indeed, $K$ is a norm limit of a ground-model sequence of finite-rank operators, 
whose extensions remain finite-rank operators and converge in norm to $\check K$ in every forcing extension.

Conversely, every operator $T\in\bb$ in a forcing extension is determined by its matrix coefficients
$$
\big(\langle T(e_n),e_m\rangle\big)_{m,n\in\N}
$$
with respect to any fixed ground-model orthonormal basis, and hence is coded by an element of $2^\N$.
We will use nice names for such codes. As usual, identifying  $2^\N$ with
$\wp(\N)$, a $\PP$-name $\dot r$ for an element of $2^\N$ is called nice if
$\dot r=\{\langle\check n,p\rangle:n\in\N,\ p\in A_n\}$, where each
$A_n\subseteq\PP$ is an antichain (see \cite{kunen}). For every $\PP$-name $\dot x$
for an element of $2^\N$ there is a nice name $\dot y$ such that $\PP$
forces that $\dot x=\dot y$. Accordingly,
a $\PP$-name for an operator will be called nice if its real code is given
by a nice name.

Therefore, if $G\subseteq\PP_\kappa$ is generic and $T\in V[G]\cap\bb$, then there
is a countable $S\subseteq\kappa$ and an operator
$
T_S\in V[G\cap\PP_S]
$
whose matrix with respect to $(e_n)_{n\in\N}$ is the same as the matrix of $T$.
The operator $T$ is the unique bounded extension of $T_S$ from
$\ell_2$ in ${V[G\cap\PP_S]}$ to $\ell_2$ in ${V[G]}$. Equivalently, $T_S$ and $T$
agree on the dense subspace of finite linear combinations of the vectors $e_n$
with rational complex coefficients.

As in the case of ground-model operators considered above, we will usually
identify $T_S$ with its unique bounded extension to any further forcing extension
and omit the distinction between these operators from the notation.

By choosing representatives in $\bb$, the same argument applies to elements
of $\QQ$. More precisely, if $A\in\QQ$ belongs to a forcing extension, choose
$T\in\bb$ such that $\pi(T)=A$. Then $T$ is determined by a real and hence,
for some countable set of coordinates, is the unique extension of an operator
from the corresponding intermediate model. We will make the same
identification for elements of $\QQ$.

Every masa $\M\subseteq\bb$ is also coded by a real. For example, if
$\M=\D(\E)$ is atomic for some orthonormal basis $\E$, then it is coded by
 the  orthonormal basis $\E$. In the general
case,
if $\M$ is a masa of $\bb$, then, by the description of masas given in Section 2.5, it is
of the form
$$
\M=U^*L^\infty(\mu)U
$$
for a unitary operator
$
U:\ell_2\longrightarrow L_2(\mu),
$ and a  measure $\mu$ such that $L_2(\mu)$ is separable.
Hence $\M$ is coded by a real coding $U$. 

Consequently, if $G\subseteq\PP_\kappa$ is generic and $\M\subseteq\bb$
is a masa in $V[G]$, then there is a countable $S\subseteq\kappa$ and a real
$r\in V[G\cap\PP_S]$ such that $r$ codes a masa $\M_S$ in
$V[G\cap\PP_S]$, while the interpretation of the same code in $V[G]$
is $\M$. Every operator in $\M_S$ extends to an operator in $\M$, although
$\M$ may contain operators which do not belong to the intermediate model.
We will usually use the same notation for the interpretations of a fixed code
in different forcing extensions and omit this distinction.

\subsection{Small Boolean algebras in the Cohen model}

Since our main result, Theorem \ref{main} concerns Boolean
algebras of cardinality $\omega_1$ without countably generated ultrafilters in the Cohen model,
it is worthwhile to ask what such algebras are. Definitely there are such algebras in {\sf ZFC}, for
example  the free Boolean algebra  of cardinality $\omega_1$. The following proposition 
provides more examples. Recall that the reaping number of a
Boolean algebra $\A$ is the minimal cardinality of $\mathcal R\subseteq \A\setminus \{0\}$ such that
for every $A\in \A$ there is $R\in \mathcal R$ such that $R\leq A$ or $R\wedge A=0$ (see
Section 3 of \cite{hrusak}).

\begin{proposition}\label{reaping} Suppose that $\A$ is a Boolean algebra
whose reaping number is uncountable and $S$ is any nonempty set. Then $\PP_S$ forces that
$\check\A$ does not admit a countably generated ultrafilter.
\end{proposition}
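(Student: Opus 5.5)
The plan is to argue by contradiction directly in the ground model, using only that $\PP_S$ is ccc; no reduction to a single Cohen real is needed. Suppose some condition $p\in\PP_S$ forces that $\check\A$ admits a countably generated ultrafilter $\dot\U$. A countable generating family can be replaced by the family of all its finite meets, listed as $A_0, A_0\wedge A_1,\dots$. So we may fix $\PP_S$-names $(\dot A_n)_{n\in\N}$ such that
$$p\forces \dot A_n\in\check\A\setminus\{0\},\quad \dot A_{n+1}\leq\dot A_n,\quad \text{and } \{\dot A_n:n\in\N\} \text{ generates the ultrafilter } \dot\U \text{ of } \check\A.$$
In particular, $p$ forces that for every $A\in\A$ there is $n$ with $\dot A_n\leq \check A$ or $\dot A_n\wedge\check A=0$.

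Next, in the ground model I will collect all possible values of the names. Let
$$\mathcal R=\{B\in\A: \exists q\leq p\ \exists n\in\N\ \ q\forces\dot A_n=\check B\}.$$
This set is countable. For each fixed $n$, the conditions below $p$ that decide the value of $\dot A_n$ are dense below $p$. Any two such conditions forcing different values are incompatible. By the ccc of $\PP_S$, there are therefore only countably many values for each $n$. Moreover $\mathcal R\subseteq\A\setminus\{0\}$, since $p$ forces every $\dot A_n$ to be nonzero.

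Since the reaping number of $\A$ is uncountable, $\mathcal R$ is not a reaping family. Hence there is $A\in\A$ such that no $R\in\mathcal R$ satisfies $R\leq A$ or $R\wedge A=0$. Now use the fact that $p$ forces $\dot\U$ to be an ultrafilter generated by the $\dot A_n$. Then there are $q\leq p$ and $n$ such that $q\forces \dot A_n\leq\check A$ or $q\forces\dot A_n\wedge\check A=0$; the two cases correspond to $A\in\dot\U$ and $1-A\in\dot\U$. Extend $q$ to $r\leq q$ deciding $\dot A_n=\check B$. Then $B\in\mathcal R$, and by absoluteness of Boolean operations on ground-model elements, $B\leq A$ or $B\wedge A=0$ already holds in $V$. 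This contradicts the choice of $A$.

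The only point requiring care is the countability of $\mathcal R$, which is where the ccc (indeed, countability of antichains of $\PP_S$) enters. A secondary point is making sure the generating family is replaced by a decreasing sequence, so that a single $\dot A_n$ below $A$ or $1-A$ is witnessed by one condition. I expect no real obstacle beyond this.
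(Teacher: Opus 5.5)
Your proposal is correct and follows essentially the same route as the paper: the ccc traps all possible values of the names for the countable base in a countable ground-model set $\mathcal R\subseteq\A\setminus\{0\}$ (the paper's $\mathcal V$), and an element $A$ witnessing that $\mathcal R$ is not reaping contradicts the fact that $A$ or $1-A$ lies above some element of the base. Your extra care in passing to a decreasing sequence of finite meets and working below a condition $p$ only makes explicit what the paper's phrase ``countable basis'' leaves implicit.
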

\begin{proof}
Suppose that $\dot{\mathcal U}$ is a name for a countable basis of an ultrafilter
of $\check \A$. Since $\PP_S$ satisfies c.c.c. there is a countable $\mathcal V\subseteq\A\setminus\{0\}$ such that
$\PP_S$ forces that $\dot{\mathcal U}\subseteq \check{\mathcal V}$. 
But $\mathcal V$ is not a reaping family, so there is $A\in \A$ which
splits each element of $\mathcal V$ which is impossible because either $A$ or $-A$ will be 
above an element of the interpretation of $\dot{\mathcal U}$ in the generic
extension by $\PP_S$.
\end{proof}

So Boolean algebras like the ground model $\wp(\N)/Fin$ are examples
of algebras of cardinality $\omega_1$ with no countably generated ultrafilter in the Cohen model.
The same applies to the ground-model Lebesgue measure algebra: its reaping
number is uncountable, as follows immediately from the Baire category theorem
applied to the complete metric $d(A,B)=\mu(A\triangle B)$.
However,
in the Cohen model not having a countably generated ultrafilter
is a strong  limiting condition for Boolean algebras of cardinality $\omega_1$ as seen in the following:

\begin{proposition}\label{ind} In the model obtained from a model of 
{\sf CH} by adding $\kappa\geq\omega_2$ Cohen reals every Boolean algebra
of cardinality less than $\kappa$
which does not admit a countably generated ultrafilter  admits an uncountable
independent family.
\end{proposition}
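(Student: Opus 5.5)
The plan is to build the independent family by a recursion of length $\omega_1$. At each successor step a Cohen real, generic over an intermediate model containing $\A$, will supply a new independent element through Lemma \ref{gdelta-split}.

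\emph{Setup.} Work in $V[G]$ with $G\subseteq\PP_\kappa$ generic, and let $\A$ have cardinality $<\kappa$ and no countably generated ultrafilter. As explained in the subsection on operators in Cohen extensions, $\A$ is coded by a set of reals of size $<\kappa$. Hence there is $S\subseteq\kappa$ with $|S|<\kappa$ and $\A\in V[G\cap\PP_S]$. Since $\kappa\setminus S$ has size $\kappa$, for any countable subalgebra $\B\subseteq\A$ lying in $V[G\cap\PP_S]$ we can use a single fresh coordinate $\gamma\notin S$. This yields a Cohen real over $V[G\cap\PP_S]$, and we may identify it with a generic filter for the countable atomless poset $\B\setminus\{0\}$, when $\B$ is atomless. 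That filter is then an ultrafilter $\dot\U$ of $\B$ which is generic over $V[G\cap\PP_S]$.

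\emph{Successor step.} Suppose $\F_\alpha\subseteq\A$ is a countable independent family and $\B$ is the (free, atomless) subalgebra it generates. In $V[G]$, Lemma \ref{gdelta-split} gives $A\in\A\setminus\B$ splitting the generic ultrafilter $\U$. Because $\A$ and $\B$ lie in $V[G\cap\PP_S]$, there is a condition $b_0\in\B\setminus\{0\}$ forcing, over $V[G\cap\PP_S][G\cap\PP_{S\cup\{\gamma\}}]$, that $\check A$ splits $\dot\U$. Moreover $A$ may be taken in the ground of that step, since it belongs to $\A\subseteq V[G\cap\PP_S]$. By genericity, every nonzero $b\le b_0$ in $\B$ is forced into $\dot\U$ by some extension. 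Hence $b\wedge A\neq0\neq b-A$ for all nonzero $b\le b_0$, so $A$ is independent over $\B$ below $b_0$. Write $b_0$ as a Boolean combination of a finite $\F^0\subseteq\F_\alpha$. Then $(\F_\alpha\setminus\F^0)\cup\{A\}$ is independent below $b_0$. Finally, the transfer $X\mapsto (X\wedge b_0)\vee(1-b_0)$ converts a family independent below a nonzero $b_0$ into a globally independent family. Indeed, every signed finite meet of the new elements lies above the corresponding signed meet of the old ones intersected with $b_0$, which is nonzero.

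\emph{Main obstacle: limit stages.} Each successor step discards finitely many members and modifies the rest. Along $\omega_1$ steps a fixed element might be modified infinitely often, and the decreasing sequence of the $b_0$'s could have no nonzero lower bound. To avoid this, I would first try to arrange $b_0=1$. The set $D$ of $b\in\B\setminus\{0\}$ below which some $A\in\A$ is independent over $\B$ is dense, by the argument above. I would then seek a \emph{finite} maximal antichain $b_1,\dots,b_k\subseteq D$. One could pass to a cofinite subfamily of $\F_\alpha$, reindexed so that each $b_i$ is generated by finitely many discarded generators, and then glue the witnesses into $\bigvee_i (A_{b_i}\wedge b_i)$, which is independent over the remaining algebra.

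If no finite antichain is available, the fallback is different. I would freeze the modifications only on a club of stages and use a $\Delta$-system/pressing-down argument on the finite discarded sets $\F^0$. The aim is an uncountable subfamily of indices never discarded, whose elements are modified at only finitely many stages. The resulting family still witnesses independence, since independence is a finitary condition checked at some countable stage. This limit-stage bookkeeping is where I expect the real difficulty to lie.
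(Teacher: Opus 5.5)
\textbf{There is a genuine gap: the limit stages of your $\omega_1$-recursion are not handled, and they carry the whole combinatorial content.}

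Your successor step is correct. It is the paper's genericity argument read in the other direction. If a countable atomless $\B\subseteq\A$ lies in an intermediate model $V[G\cap\PP_{S'}]$ with $|S'|<\kappa$, the Cohen-generic ultrafilter of $\B$ cannot generate an ultrafilter of $\A$. Hence there are $A\in\A$ and $b_0\in\B\setminus\{0\}$ such that $A$ splits every nonzero element of $\B$ below $b_0$; in the terminology of \cite{pa}, $\B$ is not deep. Since your families are built in $V[G]$, you must enlarge $S$ by countably many coordinates at each stage, but that is harmless.

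Neither of your remedies works as stated. You relativize every surviving element to the current $b_0$ at every successor step. So an element that survives $\omega$ many steps has no value at the limit: its versions would have to be cut down by the meet of infinitely many $b_0$'s, which need not exist in $\A$ or be nonzero. An uncountable family of elements ``modified at only finitely many stages'' therefore cannot come out of this scheme, and the $\Delta$-system sketch does not address this.

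A finite maximal antichain inside $D$ need not exist either. Let $W_n$ be clopen sets partitioning $2^\omega\setminus\{x\}$, let $\Xi_n$ be increasing co-uncountable subsets of $\omega_1$ with $\bigcup_n\Xi_n=\omega_1$, and put $Z_n=\{z: z|\Xi_n\equiv 0\}$. Take $Y=(\{x\}\times 2^{\omega_1})\cup\bigcup_n (W_n\times Z_n)$. Then $Clop(Y)$ has all ultrafilters of character $\omega_1$. However, for $\B=Clop(2^\omega)$ pulled back to $Y$, your set $D$ consists exactly of the nonzero elements of $\B$ not containing $x$, so no finite subfamily of $D$ covers.

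\textbf{The fix.} Choose the relativizing element once and for all, before the recursion.
\begin{enumerate}
\item Take a countable $M\prec H(\theta)$ in $V[G]$ with $\A\in M$. By your successor argument, $\A\cap M$ is not deep. This gives $b\in (\A\cap M)\setminus\{0\}$ and $A\in\A$ splitting all nonzero elements of $\A\cap M$ below $b$.
\item By elementarity, every countable subalgebra $\mathcal C$ of $\A$ has an element splitting all its nonzero elements below $b$. A counterexample $\mathcal C$ could be found in $M$, hence would be a subset of $M$, and $A$ refutes it.
\item Recursively choose $A_\alpha$, for $\alpha<\omega_1$, splitting all nonzero elements below $b$ of the algebra generated by $\{b\}\cup\{A_\beta:\beta<\alpha\}$. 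Nothing is ever modified.
\item By induction on the length of signed meets, $\{A_\alpha:\alpha<\omega_1\}$ is independent below $b$. One application of your transfer map finishes.
\end{enumerate}
Alternatively, choose $b_\alpha, A_\alpha$ over a continuous chain of countable subalgebras and apply the pressing-down lemma to $\alpha\mapsto b_\alpha$.

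This ZFC step is exactly Lemma 3 of \cite{pa}, which the paper uses in contrapositive form. If $\A$ has no uncountable independent family, it has a countable deep subalgebra $\B$. The Cohen-generic ultrafilter of $\B$ over a model containing $\A$ then meets every dense set $D_A$, and so generates a countably generated ultrafilter of $\A$.
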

\begin{proof}
This is essentially the result of \cite{pa}. We present the proof for  the convenience of the reader.
Let $\A$ be a Boolean algebra in the Cohen model which has cardinality less than $\kappa$
and does not admit an uncountable independent family. We will show that
$\A$ admits a countably generated ultrafilter. If $\A$ has an atom, then the principal ultrafilter generated by this atom is
countably generated. Thus we may assume that $\A$ is atomless.

An atomless Boolean subalgebra $\B$ of $\A$ is called deep (Definition 1 of \cite{pa}) if
for every $A\in \A$ and $B\in \B\setminus\{0\}$ there is $C\in \B\setminus \{0\}$
such that $C\leq A\wedge B$ or $C\leq B-A$. By Lemma 3 of \cite{pa} since $\A$ is atomless
and does not admit an uncountable independent family,  there is a countable
deep subalgebra $\B$ of $\A$ (essentially we just take a countable elementary submodel of $\A$).

Since $\A$ is of cardinality less than $\kappa$ in the model obtained from a model of 
{\sf CH} by adding $\kappa\geq\omega_2$ Cohen reals we have a Cohen real over
an intermediate submodel which contains $\B$ and $\A$. As $\B$ is countable and atomless
the forcing $\B\setminus\{0\}$ is forcing equivalent to $\PP$. So, there is a generic filter in 
$\B\setminus\{0\}$ over an intermediate submodel containing $\A$. As in the proof of Lemma 5 of \cite{pa}
note that since $\B$ is deep in $\A$ the sets
$$D_A=\{C\in \B\setminus\{0\}: C\leq A\ \hbox{or}\  C\leq -A\}$$
are dense in $\B\setminus\{0\}$ for every $A\in \A$ and they belong to the intermediate
model over which there is a Cohen real. It follows that the generic ultrafilter
in $\B\setminus\{0\}$ corresponding to the Cohen real generates an ultrafilter in $\A$.
Since $\B$ is countable, this ultrafilter is countably generated. 
\end{proof}

Note that, by the above proposition, in the Cohen model the Boolean algebras of clopen subsets of spaces like
 Malykhin's space of \cite{malykhin} (cf. \cite{juhasz}), i.e.,
with countable independence (even countable tightness) and with no countably generated ultrafilters
 must have at least cardinality continuum.
The same applies to Boolean algebras of clopen subsets of totally disconnected Efimov spaces by 
Theorem 2.1 of \cite{geschke} and a remark after it. We do not know
if such Boolean algebras of cardinality continuum
can correspond to masas of $\QQ$ in the Cohen model or even in {\sf ZFC}
(by the results of \cite{masas} they do correspond to masas of $\QQ$ under {\sf CH}). 

\section{Characters of points in Gelfand spaces of masas  of $\QQ$}

The purpose of this section is to prove the operator-algebraic ingredient
of Theorem \ref{p-main}. We use ${\sf MA}_{\sigma-\hbox{\rm centered}}$
to construct a projection witnessing that a point of small character in a compact space $K$
prevents a commutative C*-subalgebra of $\QQ$  $*$-isomorphic to $C(K)$  from being a masa.

\begin{proposition}\label{masigma} Let $\kappa$ be an infinite cardinal such that
${\sf MA}_{\sigma-\hbox{\rm centered}}(\kappa)$ holds.
If $K$ is a compact Hausdorff space such that
$C(K)$ is $*$-isomorphic to a masa in $\QQ$, then $K$ has no point of character  $\kappa$.
\end{proposition}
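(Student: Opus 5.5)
Suppose towards a contradiction that $\CC\subseteq\QQ$ is a masa $*$-isomorphic to $C(K)$ and that $x\in K$ has character $\kappa$, with $\kappa$ infinite. The plan is to use ${\sf MA}_{\sigma-\hbox{\rm centered}}(\kappa)$ to build a projection $P\in\bb$ with $\pi(P)\notin\CC$ such that $\pi(P)$ commutes with every element of $\CC$. This contradicts maximality by Lemma \ref{sa-masa}.

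First I translate the local data at $x$ into operators. Fix a neighbourhood base $\{U_\xi:\xi<\kappa\}$ at $x$. For each $\xi$ choose by Urysohn a continuous $f_\xi:K\to[0,1]$ with $f_\xi(x)=1$ and support in $U_\xi$. Let $Q_\xi\in\bb$ be positive contractions lifting the images of $f_\xi$ in $\CC$. Finite products of the $f_\xi$ are nonzero at $x$, so these lifts behave like a centered system that "shrinks to $x$". Any element $g\in C(K)$ is, near $x$, close to the scalar $g(x)$. Precisely, for every $\varepsilon>0$ there is $\xi$ with $\|(g-g(x))f_\xi\|<\varepsilon$.

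The target projection $P$ is a projection onto a closed span of an infinite orthonormal sequence $(v_k)$ chosen "inside" all the $Q_\xi$ asymptotically. Concretely, I want $\|Q_\xi v_k - v_k\|\to0$ as $k\to\infty$ for every $\xi<\kappa$, and I want the $v_k$ to be such that $P$ is "small" relative to each $Q_\xi$. Then for any $g\in\CC$ with lift $G$, the cutdown $GP$ should be close modulo compacts to $g(x)P$. This gives $[\pi(G),\pi(P)]=0$, after symmetrising using that $P\le Q_\xi$ asymptotically for all $\xi$; I handle $G$ first on the dense subalgebra generated by the $f_\xi$-style functions and then extend by norm continuity. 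For $\pi(P)\notin\CC$, I argue that if $\pi(P)=1_V$ for clopen $V$, then $P$ commuting and lying below every $f_\xi$ forces $V\subseteq\bigcap U_\xi=\{x\}$. So $x$ is isolated and $1_{\{x\}}$ is a minimal projection of $\CC$. I then refine the construction by taking $(v_k)$ to be only "half" of a sequence. I pick two orthogonal such sequences $(v_k),(w_k)$ and let $P$ project onto the span of the $v_k$ only, so that $\pi(P)$ is a proper nonzero subprojection of whatever $\CC$-projection would have to contain it, contradicting $\pi(P)\in\CC$ together with the fact that $\CC$ is generated by functions constant near $x$.

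The forcing is where ${\sf MA}_{\sigma-\hbox{\rm centered}}(\kappa)$ enters. Conditions are pairs $(s,F)$ with $s$ a finite orthonormal sequence of vectors with rational coordinates (in a fixed basis), with finite support, and $F\in[\kappa]^{<\omega}$. The extension $(s',F')\le(s,F)$ holds when $s'$ end-extends $s$, $F'\supseteq F$, and every new vector $v$ in $s'$ satisfies $\|Q_\xi v-v\|<2^{-|s|}$ for all $\xi\in F$. Conditions with the same $s$ are compatible, so the poset is $\sigma$-centered. The dense sets are $D_\xi$ (putting $\xi$ into $F$) and $E_n$ (making $|s|\ge n$), which gives $\kappa$ many in total.

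The main obstacle is density of $E_n$: given finite $F$, I need a new unit vector orthogonal to the finitely many previous ones with $\|Q_\xi v-v\|$ small for all $\xi\in F$. This holds because $\prod_{\xi\in F}f_\xi$ is nonzero near $x$ with value $1$ at $x$. Its lift therefore has $1$ in its essential spectrum, since $\pi$ of it has $1$ in its spectrum. Hence there are infinitely many almost-orthogonal approximate eigenvectors, and I can take one far out with finite support and rational coordinates. Care is needed because the $Q_\xi$ commute only modulo compacts, so I will choose the $Q_\xi$ as functional-calculus images and use the essential spectrum of a single positive lift of the product, compressing approximately. The second delicate point is the commutation estimate for arbitrary $G$, which I expect to require summable errors ($2^{-k}$) so that the relevant differences are compact by the second listed fact about compact operators.
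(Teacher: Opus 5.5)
Your plan is essentially the paper's proof in complementary form. The paper takes $f_\xi$ vanishing on a neighbourhood of $x$, lifts them to positive $T_\xi$, and arranges $T_\xi P\in\K$, so that $\pi(P)$ is annihilated by the ideal $0_x$. You take bumps $f_\xi$ at $x$ and arrange $(I-Q_\xi)P\in\K$. The remaining ingredients coincide:
\begin{itemize}
\item the $\sigma$-centered poset of finite sequences of rational vectors together with finite subsets of $\kappa$;
\item the density step via infinitely many orthonormal approximate eigenvectors of a lift (the paper uses Weyl--von Neumann on a lift of some $f$ with $ff_\xi=0$);
\item the even/odd splitting that rules out $\pi(P)=1_{\{x\}}$.
\end{itemize}
Your direct identity $\iota(g)\pi(P)=g(x)\pi(P)$ replaces the paper's Stone--Weierstrass step and works for every $g$ at once.

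One correction is needed in the order. With the bound $2^{-|s|}$, a filter meeting your dense sets only guarantees $\|(I-Q_\xi)v_k\|\to 0$. That is not enough to make $(I-Q_\xi)P$ compact, since the jumps in length between conditions of the filter containing $\xi$ can be arbitrarily large. Instead, bound the $i$-th new vector by $2^{-i}$ for all $\xi$ in the finite set of the weaker condition, as in clause (f) of the paper. This order is still transitive and gives exactly the summability you anticipate.

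The density step you flag as the main obstacle goes through directly with the product. Let $h=\prod_{\eta\in F}f_\eta$ and let $H$ be a positive contractive lift of $\iota(h)$. Then $0\le 1-f_\xi\le 1-h$ pointwise for each $\xi\in F$. Hence $\pi\bigl((I-H)^2-(I-Q_\xi)^2\bigr)\ge 0$, so this difference equals $D+C$ with $D\ge 0$ and $C\in\K$. Take an orthonormal sequence $(u_n)$ with $\|(I-H)u_n\|\to 0$; it exists by Weyl's criterion, since $1$ is in the essential spectrum of $H$. This gives
\[
\|(I-Q_\xi)u_n\|^2\le\|(I-H)u_n\|^2+|\langle Cu_n,u_n\rangle|\to 0 .
\]
The same finite-support rational perturbation as in the paper then finishes the step.
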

\begin{proof}
Suppose that $\iota:C(K)\rightarrow\A\subseteq\QQ$ is a unital $^*$-isomorphism 
and $x\in K$ has a neighbourhood basis of cardinality $\kappa$. Let $0_x$ be
 the set of all $f\in C(K)$ which are zero on some neighbourhood of $x$. 
Let $\{f_\xi:\xi<\kappa\}\subseteq0_x$ be such a collection of real-valued 
functions of norm one that $0\leq f_\xi\leq1$ and for every open $U\subseteq K$
 with $x\in U$ there is $\xi<\kappa$ such that
$f_\xi\!\upharpoonright(K\setminus U)=1$. The existence of the neighbourhood
 basis at $x$ of cardinality $\kappa$ guarantees the existence of such a collection.

For $\xi<\kappa$, by Proposition II.5.1.5 of \cite{blackadar}, let $T_\xi\in\bb$ be a positive operator such that
$$
\pi(T_\xi)=\iota(f_\xi)\quad\hbox{and}\quad\|T_\xi\|=1.
\leqno(1)
$$

We will find a noncompact projection $P\in\bb$, $\pi(P)\notin\A$, such that $\pi(P)$ 
commutes with all elements of $\iota[0_x\cup\{1\}]$.
 This is enough to conclude that $\A$ is not a masa because $\mathbb C1+0_x$
 is closed under multiplication and $^*$, includes constants and separates points of $K$,
 and so by the Weierstrass--Stone theorem $\iota[0_x\cup\{1\}]$ generates $\A$. 
Hence $\pi(P)=\pi(P)^*$ would commute with the entire 
$\A$, and so $C^*(\A\cup\{\pi(P)\})$ would be a witness for $\A$ not being a masa.

In fact, it is enough to have $P\in \bb$ such that
\begin{enumerate}
\item[(2)] $P$ is a noncompact projection,
\item[(3)] $T_\xi P$ is compact for every $\xi<\kappa$,
\item[(4)] $\pi(P)\notin\A$,
\end{enumerate}
because then, as any element $f\in0_x$ satisfies $f=ff_\xi$ for some $\xi<\kappa$, by (1) and (3) we have
$$
\iota(f)\pi(P)=\iota(f)\iota(f_\xi)\pi(P)=\iota(f)\pi(T_\xi P)=0.
\leqno(5)
$$
Taking adjoints in (5), we obtain $\pi(P)\iota(f)=0$. Thus $\pi(P)$ commutes
 with all elements of $\iota[0_x\cup\{1\}]$. 
The remainder of the proof is devoted to obtaining $P$ as in (2)--(4).

Fix an orthonormal basis $\E=(e_n)_{n\in\N}$ for $\ell_2$. 
When we mention supports of vectors in $\ell_2$ or coordinates 
of vectors in $\ell_2$, this is meant with respect to $\E$. 
To use ${\sf MA}_{\sigma\text{-centered}}(\kappa)$, 
we define a partial order $\SSS$ to consist of triples
$$
p=((v_i^p:i<n_p), n_p, F_p),
$$
where
\begin{enumerate}
\item[(a)] $n_p\in\N$,
\item[(b)] the $v_i^p$'s, for $i<n_p$, are finitely
 supported unit vectors with rational coordinates from
 $\ell_2$ which have pairwise disjoint supports,
\item[(c)] $F_p$ is a finite subset of $\kappa$.
\end{enumerate}

We declare $p\leq q$ if and only if
\begin{enumerate}
\item[(d)] $n_p\geq n_q$ and $F_p\supseteq F_q$,
\item[(e)] $v_i^p=v_i^q$ for all $i<n_q$,
\item[(f)] $\|T_\xi(v_i^p)\|\leq2^{-i}$ for all $i\in n_p\setminus n_q$ and all $\xi\in F_q$.
\end{enumerate}

It is clear that this is a partial order which is $\sigma$-centered 
since elements $p\in\SSS$ with fixed first two coordinates are all compatible 
and there are only countably many possibilities for the values of the first two coordinates. 
Now we will show that certain subsets of $\SSS$ are dense in $\SSS$.

First, for $k\in\N$, consider the following subsets of $\SSS$:
$$
\mathbb E_k=\{p\in\SSS:n_p>k\}.
$$
We prove that each $\mathbb E_k$ is dense in $\SSS$. 
To do so, given $q\in\SSS$, we will find $p\leq q$ such that $n_p=n_q+1$. 
Applying this procedure finitely many times, we will be able to construct a condition
 stronger than $q$ which is in $\mathbb E_k$, as required for the density of $\mathbb E_k$.

Let $m\in\N$ be such that the supports of all $v_i^q$, for $i<n_q$,
 are contained in $\{0,\ldots,m\}$. Consider the projection $Q\in\bb$ 
onto the space spanned by $\{e_n:n\leq m\}$. Note that, as each $f_\xi$,
 for $\xi\in F_q$, is in $0_x$, there is a real-valued $f\in C(K)$ of norm one such that
$$
f_\xi f=0\quad\hbox{for every }\xi\in F_q.
\leqno(6)
$$
Indeed, the finitely many functions $f_\xi$, for $\xi\in F_q$, 
are zero on a common neighbourhood of $x$, and $f$ can be chosen with support contained in that neighbourhood.

Let $S\in\bb$ be self-adjoint such that $\pi(S)=\iota(f)$. By the Weyl--von Neumann 
theorem there is an orthonormal basis $\E'=(e_n')_{n\in\N}$ such that $S=S'+R$, 
where $S'\in\D(\E')$ and $R\in\K$. Let $(\alpha_n)_{n\in\N}$ 
be such that $S'(e_n')=\alpha_ne_n'$ for each $n\in\N$.

As
$$
\|\pi(S')\|=\|\pi(S)\|=\|\iota(f)\|=\|f\|=1,
$$
there are $\varepsilon>0$ and an infinite $X\subseteq\N$ such that $|\alpha_n|>\varepsilon$ 
for all $n\in X$. By (6), for every $\xi\in F_q$ we have
$$
\pi(T_\xi S')=\pi(T_\xi)\pi(S')=\iota(f_\xi)\iota(f)=\iota(f_\xi f)=0,
$$
and hence $T_\xi S'\in\K$. Therefore
$$
\varepsilon\lim_{n\in X}\|T_\xi(e_n')\|\leq\lim_{n\in X}\|\alpha_nT_\xi(e_n')\|=\lim_{n\in X}\|T_\xi S'(e_n')\|=0.
\leqno(7)
$$
for every $\xi\in F_q$. Since $Q$ is a finite-rank projection, we also have
$$
\lim_{n\in X}\|Q(e_n')\|=0.
\leqno(8)
$$

It follows from (7) and (8) that we can take $j\in X$ sufficiently large such that
$$
\|T_\xi(e_j')\|<\frac{1}{10\cdot2^{n_q}}\quad\hbox{for every }\xi\in F_q
$$
and
$$
\|Q(e_j')\|<\frac{1}{10\cdot2^{n_q}}.
$$
Let $v'=e_j'-Q(e_j')$. Then the support of $v'$ is disjoint from the supports of $v_i^q$, for $i<n_q$, and
$$
1-\frac{1}{10\cdot2^{n_q}}\leq\|v'\|\leq1+\frac{1}{10\cdot2^{n_q}}.
$$
Moreover, by (1), for every $\xi\in F_q$ we have
$$
\|T_\xi(v')\|\leq\|T_\xi(e_j')\|+\|T_\xi Q(e_j')\|\leq\|T_\xi(e_j')\|+\|Q(e_j')\|<\frac{1}{5\cdot2^{n_q}}.
$$

Put $w=v'/\|v'\|$. Since $10\cdot2^{n_q}\geq10$, it follows that, for every $\xi\in F_q$,
$$
\|T_\xi(w)\|=\frac{\|T_\xi(v')\|}{\|v'\|}<\frac{1}{5\cdot2^{n_q}}
\left(1-\frac{1}{10\cdot2^{n_q}}\right)^{-1}\leq \frac{1}{5\cdot2^{n_q}}\cdot{10\over 9}
\leq\frac{2}{9\cdot2^{n_q}}.
\leqno(9)
$$

The finitely supported unit vectors with rational coordinates whose supports are contained
 in $\{m+1,m+2,\ldots\}$ are dense in the unit sphere of the closed subspace spanned
 by $\{e_n:n>m\}$. Therefore we can choose such a vector $v$ satisfying
$$
\|v-w\|<\frac{1}{4\cdot2^{n_q}}.
$$
Its support is disjoint from the supports of $v_i^q$, for $i<n_q$. 
Moreover, by (1) and (9), for every $\xi\in F_q$,
$$
\|T_\xi(v)\|\leq\|T_\xi(w)\|+\|T_\xi(v-w)\|\leq\|T_\xi(w)\|+\|v-w\|
<\left(\frac{2}{9}+\frac14\right)\frac1{2^{n_q}}<\frac1{2^{n_q}}.
\leqno(10)
$$

So put $v_i^p=v_i^q$ for $i<n_q$ and $v_{n_q}^p=v$. Then
$$
p=((v_i^p:i<n_q+1),n_q+1,F_q)\in\SSS,
$$
and $p\leq q$ by (10). Applying this construction finitely
many times proves that every $\mathbb E_k$ is dense in $\SSS$.

Now, for $\xi<\kappa$, consider the following subsets of $\SSS$:
$$
\mathbb D_\xi=\{p\in\SSS:\xi\in F_p\}.
$$
For every $\xi<\kappa$, the set $\mathbb D_\xi$ is dense in $\SSS$, since
$$
((v_i^q:i<n_q) ,n_q,F_q\cup\{\xi\})\leq((v_i^q:i<n_q),n_q,F_q).
$$

So we can use ${\sf MA}_{\sigma\text{-centered}}(\kappa)$ to obtain a filter $\mathbb G\subseteq\SSS$ such that
$$
\mathbb G\cap\mathbb D_\xi\neq\varnothing\quad\hbox{and}\quad\mathbb G\cap\mathbb E_k\neq\varnothing
$$
for every $\xi<\kappa$ and $k\in\N$. Note that the compatibility of all
 elements in $\mathbb G$ implies that $v_i^p=v_i^q$ for any 
$p,q\in\mathbb G$ such that $i<n_p,n_q$. So let $v_i=v_i^p$ for any $p\in\mathbb G$ 
such that $n_p>i$, which exists since $\mathbb G\cap\mathbb E_i\neq\varnothing$.

Also, given $p\in\mathbb G$ with $\xi\in F_p$ and any $i\geq n_p$, 
we find $q\in\mathbb G\cap\mathbb E_i$ and further 
$r\in\mathbb G$ such that $r\leq q,p$. So, by (f),
$$
\|T_\xi(v_i)\|\leq\frac1{2^i}.
\leqno(11)
$$
It follows from (11) that for every $\xi<\kappa$ we have
$$
\sum_{i\in\N}\|T_\xi(v_i)\|\leq\sum_{i<n_p}\|T_\xi(v_i)\|+\sum_{i\geq n_p}\frac1{2^i}<\infty.
\leqno(12)
$$

So if $P_0, P_1$ are the orthogonal projections onto the closed subspaces
 of $\ell_2$ spanned by $\{v_{2i}:i\in\N\}$ and $\{v_{2i+1}:i\in\N\}$, 
respectively, then $P_0$ and $P_1$ are noncompact projections, 
because their ranges are infinite-dimensional. Thus both satisfy (2).

Moreover, $T_\xi P_j$ is compact for $j=0, 1$. Indeed, let $P_{j, n}$ be 
the projection onto the space spanned by the first $n$ vectors used
 in the definition of $P_j$. Then $T_\xi P_{j, n}$ has finite rank and
$$
\|T_\xi(P_j-P_{j, n})\|\leq\sum_{i\geq n}\|T_\xi(v_{2i+j})\|,
$$
which converges to $0$ by (12).
Thus $T_\xi P_j$ is a norm limit of finite-rank operators and hence is compact. 
So we obtained (2) and (3) for both $P_0$ and $P_1$.

To see (4) for one of them, suppose that $\pi(P_j)\in\A$, and put $g_j=\iota^{-1}(\pi(P_j))$. By (1) and (3),
$$
\iota(f_\xi g_j)=\iota(f_\xi)\pi(P_j)=\pi(T_\xi P_j)=0
$$
for every $\xi<\kappa$. Thus $f_\xi g_j=0$ for every $\xi<\kappa$.
 Given any neighbourhood $U$ of $x$, take $\xi<\kappa$ such that 
$f_\xi=1$ on $K\setminus U$. It follows that $g_j=0$ on $K\setminus U$, 
and hence $g_j$ can be nonzero only at $x$. 
Since $P_j$ is noncompact, $\pi(P_j)\neq0$, so $g_j\neq0$. 
As $g_j$ is a projection in $C(K)$, this implies that $x$ is isolated in $K$ and
$$
g_j=1_{\{x\}}.
$$
Therefore both $\pi(P_0)$ and $\pi(P_1)$ cannot belong to $\A$ because they are distinct
as orthogonal and nonzero projections,
so one of them satisfies (4), which completes the proof.
\end{proof}

%For $n\in \N$ we define $\HH_n = \{v\in \HH: \|v\|=1\ \& \ v(m) = 0\ \hbox{for all}\  m < n\}$.

\section{Cohen indestructible masas of $\QQ$}

This section contains the proof of Theorem \ref{main}. The first two lemmas
provide a local modification of a Boolean embedding which destroys a prescribed
Cohen name for a commuting operator while preserving the required Boolean
isomorphism type. We then iterate this procedure under {\sf CH} and finally
transfer the construction to the full Cohen extension.

Lemmas \ref{substitution} and \ref{main-lemma} are forcing versions
of Lemmas 3.2 and 3.3 of \cite{masas}. The crucial difference between Lemma
\ref{substitution} and Lemma 3.2 of \cite{masas} is that here
we only have a forcing name $\dot T$ for an operator rather than an actual operator $T$ as in \cite{masas}.
This makes the construction of the ground model projection $Q$ more involved as it needs to
predict what different conditions may force about $\dot T$. Such a prediction is possible
because we assume the existence of a ground model projection $P$ which has some information
about $\dot T$, namely we assume that $P$ splits a possibly new ultrafilter which is C*-split by $\dot T$.
When Lemma \ref{substitution} is used through Lemma \ref{main-lemma}
in the proof of Proposition \ref{ch-proposition}, this is provided by the assumption
that $\PP$ does not add a countably generated ultrafilter to $\A$. In the final
application this follows from the hypothesis of Theorem \ref{main} by the
factorization argument in Proposition \ref{cohen-proposition} which is 
possible since the algebra is of small cardinality.

\begin{lemma}\label{substitution} Let $\E$ be an orthonormal basis of $\ell_2$.
Suppose that
\begin{enumerate}
\item[(1)] $\A\subseteq\B\subseteq\D(\E)$ are countable Boolean subalgebras of projections,
\item[(2)] $\B$ is a simple extension of $\A$ by $P$, where $P\in\D(\E)$ is a projection,
\item[(3)] $\pi|\B$ is a Boolean isomorphism,
\item[(4)] $\J=\J_{\A,P}$ is as in Definition \ref{def-ideal-simple},
\item[(5)] $\dot T, \dot c_1, \dot c_2, \dot\U$ are $\PP$-names and $p\in\PP$ is such that
$p$ forces that
\begin{enumerate}
\item[(5A)] $\dot T\in\bb$ is self-adjoint,
\item[(5B)] $\dot\U$ is an ultrafilter of $\check\A$ and
$\check\J\cap\dot\U=\emptyset$,
\item[(5C)] $\dot c_1,\dot c_2\in\R$ and $\dot c_1<\dot c_2$,
\item[(5D)] for every $R\in\dot\U$ and every projection $S\in\bb$ such that
$R-S\in\K$,
$$\dot c_1,\dot c_2\in
\sigma_{S\bb S}
(S\dot T S)
$$
where the spectrum is computed in the corner $S\bb S$, whose unit is $S$.
\end{enumerate}
\end{enumerate}
Then there is an ideal $\J\subseteq\I\subseteq\A$, a projection $Q\in\bb$,
a Boolean monomorphism $h:\pi[\B]\rightarrow\QQ$ and $p'\leq p$ such that
\begin{enumerate}
\item[(6)] $p'\forces[\dot T,\check Q]\notin\K$,
\item[(7)] $RQ,QR,R(I-Q),(I-Q)R\notin\K$ for all $R\in\A\setminus\I$,
\item[(8)] $(P-Q)R,R(P-Q)\in\K$ for all $R\in\I$,
\item[(9)] $[Q,R]\in\K$ for every $R\in\A$,
\item[(10)] $h$ is the identity on $\pi[\A]$ and $h(\pi(P))=\pi(Q)$.
\end{enumerate}
\end{lemma}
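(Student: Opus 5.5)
The idea is to follow Lemma 3.2 of \cite{masas}: build $Q$ from finite blocks on which the original $P$ is locally replaced by projections that fail to commute with $\dot T$. In the forcing setting each block must be chosen by deciding, below a condition, enough of $\dot T$ to see the noncommutativity.

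Since $\A$ is countable and the elements of $\B$ are diagonal with respect to $\E$, identify each $R\in\B$ with its support $X_R\subseteq\N$; by (3), $X_R$ is infinite whenever $R\neq 0$. Enumerate $\A\setminus\J=\{R_k:k\in\N\}$ and $\J=\{J_k:k\in\N\}$.

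The first step is to find, for any given $k$, any condition $q\le p$ and any finite $m$, a finite set $F\subseteq X_{R_k}$ with $F\cap\{0,\dots,m\}=\emptyset$, a projection $P'$ supported in $\ell_2(F)$, and $q'\le q$ with
$$q'\forces\|[P_F\dot TP_F,\check P']\|>\delta,$$
where $\delta>0$ is a rational decided below $q'$ and satisfying $\dot c_2-\dot c_1>3\delta$. To get this, strengthen $q$ to decide rationals $a<b$ and $\delta$ with $\dot c_1<a<b<\dot c_2$. The key observation is that $R_k\notin\J$, so in $V[G]$ either $R_k\in\dot\U$, or some element below $R_k$ is in $\dot\U$ and is split by $P$. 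This is where (5B) enters: through Lemma \ref{ideal-split}, $\dot\U\cap\J=\emptyset$ means $\dot\U$ is split by $P$. I would apply (5D) to the projection $S$ onto the tail $\ell_2(X_{R'}\setminus\{0,\dots,m\})$ for suitable $R'\le R_k$ in $\dot\U$, working inside the extension. This gives $\dot c_1,\dot c_2\in\sigma(S\dot TS)$, and then Lemma \ref{commutator} yields the finite $F$ and $P'$. Since these are hereditarily finite, with rational data, some $q'\le q$ decides them.

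If one cannot guarantee that $R_k$ (or an element below it) lies in $\dot\U$ below $q$, the element simply goes into $\I$. So I would set $\I$ to be $\J$ together with those $R\in\A$ that are forced by some extension of $p'$ to be outside $\dot\U$ (that is, $1-R\in\dot\U$). Its complement consists of elements that can be put into $\dot\U$ densely often, and for those the step above applies. Making this dichotomy precise, while keeping $\I$ an ideal containing $\J$, is the main obstacle. I would first try $\I=\{R\in\A: p'\forces R\notin\dot\U\}$ for a single $p'$, chosen as a limit of a fusion that only ever extends $p$ finitely often. The difficulty is that $p'$ cannot be strengthened infinitely often in $\PP$. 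So the plan is to interleave rather than fuse: only finitely many decisions are really needed (one block already gives (6), since a single block with a nonzero commutator makes $[\dot T,Q]$ noncompact), provided the infinitely many other blocks are built in the ground model. To be safe I would have infinitely many blocks each forced by densely many conditions below $p'$, which makes $[\dot T,Q]$ have norm $>\delta$ on infinitely many disjoint blocks and hence not be compact.

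Then define $Q=\sum_k P'_k+P\bigl(I-\sum_k P_{F_k}\bigr)$, with the blocks $F_k$ pairwise disjoint, increasing, and cycled so that each $R\in\A\setminus\I$ contains infinitely many blocks. In addition, each $P'_k$ is chosen so that both $P'_k$ and $P_{F_k}-P'_k$ are nonzero inside every $R_j$ with $j\le k$, which gives (7). For $R\in\I$, by construction only finitely many blocks meet $X_R$ (blocks are taken inside elements not in $\I$, and are thinned against the enumeration of $\I$), so $Q$ differs from $P$ on $R$ by a finite-rank operator; this is (8). Each block lies inside the support of some $R_k$, and the $\A$-atoms below level $k$ are respected, so $Q$ commutes with every $R\in\A$ up to a finite-rank operator; this is (9). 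Finally, (7), (8) and (9) verify the hypotheses of Lemma \ref{sikorski} in $\QQ$ with ideal $\I$. Those hypotheses include $\J_{\pi[\A],\pi(Q)}=\pi[\I]$, together with agreement with $\pi(P)$ on $\pi[\I]$, which contains $\J$. Lemma \ref{sikorski} then produces $h$ with $h(\pi(P))=\pi(Q)$, which is (10).
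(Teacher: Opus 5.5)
You have the paper's architecture: finite blocks $F_n$, with $Q$ equal to a ground-model projection $Q_n$ on $\ell_2(F_n)$ and to $P$ elsewhere, $\I=\{R\in\A:p'\forces\check R\notin\dot\U\}$, and Lemma \ref{sikorski} at the end. However, the forcing bookkeeping, which is the heart of the lemma, does not work as you propose it.

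\textbf{Condition (6).} Your claim that one block already gives (6) is false. A single finite block with a large commutator is a finite-rank phenomenon, so it is compatible with $[\dot T,\check Q]\in\K$. Compactness only implies $\|P_{F_n}[\dot T,\check Q]P_{F_n}\|\to0$, which says nothing about any one block. What you need is: for every $r\le p'$ and every $k$, some $q\le r$ forces a large commutator on some $F_n$ with $n>k$.

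\textbf{The key observation.} Your \emph{key observation} is wrong. $R_k\notin\J$ says nothing about whether $R_k\in\dot\U$, and anything below $R_k$ lying in $\dot\U$ already forces $R_k\in\dot\U$. So indexing blocks by elements $R_k$ gives you no member of $\dot\U$ to which (5D) can be applied. Your first candidate for $\I$ (elements forced out of $\dot\U$ by \emph{some} extension of $p'$) is not an ideal, since it typically contains both $R$ and $1-R$.

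\textbf{The missing device: index stages by conditions.}
\begin{enumerate}
\item Fix $p'\le p$ once, only to decide a rational $\varepsilon$ with $\dot c_1+\varepsilon<\dot c_2$. No fusion is needed.
\item Enumerate all conditions below $p'$, each repeated infinitely often.
\item At stage $n$, take $s_n\le p_n$ and an atom $C\subseteq\N$ of the algebra generated by the first $n$ elements of $\A$ such that $s_n\forces P_C\in\dot\U$.
\item Place $F_n$ inside $C$ minus the first $n$ elements of $\J$ and minus the earlier blocks. This set is still forced into $\dot\U$ by (5B), so (5D) and Lemma \ref{commutator} apply. Decide $F_n$ by some $r_n\le s_n$ and replace the projection by a nearby ground-model one.
\end{enumerate}
This gives (6) by density: a condition forcing compactness has an extension $r'$ bounding all block commutators beyond some $k$, but $r'=p_n$ for some $n>k$. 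The same choice of atoms gives what (7)--(9) need:
\begin{itemize}
\item every $R\notin\I$ contains infinitely many blocks, namely at the stages where $p_n$ equals a fixed $r\le p'$ forcing $R\in\dot\U$;
\item every $R\in\I$ meets only finitely many blocks, because $p'\forces 1-R\in\dot\U$.
\end{itemize}
Your requirement that $P'_k$ and $P_{F_k}-P'_k$ be nonzero inside every $R_j$, $j\le k$, is incompatible with blocks lying in atoms. It is also unnecessary: $Q_n\neq0,P_{F_n}$ already follows from the commutator being forced nonzero.

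\textbf{Condition (10).} Your verification of (10) is also wrong. Apply Lemma \ref{sikorski} to $\pi[\B]$ and to the algebra generated by $\pi[\A]\cup\{\pi(Q)\}$, both as simple extensions of $\pi[\A]$. It requires $\J_{\pi[\A],\pi(Q)}=\J_{\pi[\A],\pi(P)}=\pi[\J]$, not $\pi[\I]$. In fact $\J_{\pi[\A],\pi(Q)}\neq\pi[\I]$ whenever $\I\neq\J$: for $R\in\I\setminus\J$, condition (8) gives $\pi(R)\pi(Q)=\pi(R)\pi(P)$. This is not in $\pi[\A]$, since $R\notin\J_{\A,P}$ and $\pi|\B$ is an isomorphism. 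So you must treat three cases:
\begin{itemize}
\item $R\in\J$: use (8).
\item $R\in\I\setminus\J$: the argument just given.
\item $R\notin\I$: take an ultrafilter $\mathcal V\ni R$ of $\A$ disjoint from $\I$. By (7), $\pi(Q)$ splits $\pi[\mathcal V]$, so Lemma \ref{ideal-split} gives $\pi(R)\notin\J_{\pi[\A],\pi(Q)}$.
\end{itemize}
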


\begin{proof}
Let $\E=(e_k)_{k\in\N}$. For $A\subseteq\N$ by $P_A$ we will mean the projection
in $\D(\E)$ such that $P_A(e_k)=e_k$ if $k\in A$ and $P_A(e_k)=0$ if
$k\notin A$. All projections in $\D(\E)$ are of this form for some
$A\subseteq\N$, in particular $P=P_X$ for some $X\subseteq\N$.

By strengthening $p$, choose $p'\leq p$ and
a positive rational number $\varepsilon$ such that
$$
p'\forces \dot c_1+\check \varepsilon<\dot c_2.
$$
In the remainder of the proof we work below $p'$.

Let $(A_n)_{n\in\N}$ be an enumeration of all subsets of $\N$ such that
$\A=\{P_{A_n}:n\in\N\}$ with $A_0=\N$. Let $(B_n)_{n\in\N}$ be an
enumeration, possibly with repetitions, of all subsets of $\N$ such that
$\J=\{P_{B_n}:n\in\N\}$ with $B_0=\emptyset$.

Let $(p_n:n\in\N)$ be an enumeration of all conditions in $\PP$ stronger than
$p'$, where each such condition is repeated infinitely many times. By induction
on $n\in\N$ we will construct $(F_n)_{n\in\N}$, $(Q_n)_{n\in\N}$ and
$q_n\leq p_n$ such that for every $n,n'\in\N$ the following hold:
\begin{enumerate}
\item[(a)] $F_n$ is a finite subset of $\N$ disjoint from
$\bigcup_{l\leq n}B_l$,
\item[(b)] $F_n\cap F_{n'}=\emptyset$ whenever $n\neq n'$,
\item[(c)] $F_n$ is a subset of an atom of the Boolean algebra generated by
$\{A_l:l\leq n\}$,
\item[(d)] $Q_n\in\bb$ is a projection such that
$\langle Q_n(e_k),e_{k'}\rangle\neq0$ implies $k,k'\in F_n$ for all
$k,k'\in\N$,
\item[(e)] $q_n\forces
\|[P_{\check F_n}\dot TP_{\check F_n},\check Q_n]\|>\check\varepsilon/3$.
\end{enumerate}

Once we have such objects we put
$$
B=\N\setminus\bigcup_{n\in\N}F_n,
$$
and we define
$$
Q(e_k)=
\begin{cases}
P(e_k)&\text{if $k\in B$},\\
Q_n(e_k)&\text{if $k\in F_n,\ n\in\N$}.
\end{cases}
$$

First let us present an inductive construction satisfying (a)--(e). Suppose that
we are done before stage $n\in\N$. Let
$\{C_1'',\dots,C_{k_n}''\}$ be all atoms of the subalgebra of $\wp(\N)$
generated by $\{A_l:l\leq n\}$, where $k_n\in\N$. Let $s_n\leq p_n$ and
$1\leq i\leq k_n$ be such that
$$
s_n\forces P_{\check C_i''}\in\dot\U.
$$
Such $s_n$ and $i$ exist because $\{C_1'',\dots,C_{k_n}''\}$ forms a
partition of unity in $\A$. Put
$$
C_i'=C_i''\setminus\bigcup_{l\leq n}B_l
$$
and
$$
C_i=C_i'\setminus\bigcup_{l<n}F_l.
$$
Since the join of the projections $P_{B_l}$, for $l\leq n$, belongs to $\J$,
condition (5B) implies that
$$
s_n\forces P_{\check C_i'}\in\dot\U.
$$

The set $C_i'$ is infinite. Indeed, $s_n$ forces that $P_{\check C_i'}$
belongs to an ultrafilter, and hence $\pi(P_{C_i'})\neq0$. If $C_i'$ were
finite, then $\pi(P_{C_i'})=0$, contrary to (3). Since only finitely many
finite sets $F_l$, for $l<n$, have been removed from $C_i'$, the set $C_i$
is infinite as well.

The operator $P_{C_i'}-P_{C_i}$ has finite rank. Hence, applying (5D) with
$$
R=P_{C_i'},\qquad S=P_{C_i},
$$
we obtain
$$
s_n\forces\dot c_1,\dot c_2\in
\sigma_{P_{\check C_i}\bb P_{\check C_i}} (P_{\check C_i}\dot TP_{\check C_i}).
$$
where the spectrum is computed in the corner $P_{\check C_i}\bb P_{\check C_i}$
whose unit is $P_{\check C_i}$.
In the forcing extension apply Lemma \ref{commutator} to the operator
$P_{C_i}\dot TP_{C_i}$ on the Hilbert space generated by
$\{e_k:k\in C_i\}$.  Since $s_n$ forces that
$\dot c_2-\dot c_1>\check\varepsilon$, it forces that there are a finite
$F\subseteq C_i$ and a projection $S$ supported by $F$ such that
$$
\|[P_F\dot TP_F,S]\|>\check\varepsilon/3.
$$
By strengthening $s_n$, we may decide the finite set $F$. Thus there are
$r_n\leq s_n$, a finite set $F_n\subseteq C_i$ and a $\PP$-name
$\dot S_n$ such that $r_n$ forces that $\dot S_n$ is a projection supported
by $\check F_n$ and
$$
r_n\forces
\|[P_{\check F_n}\dot TP_{\check F_n},\dot S_n]\|>\check\varepsilon/3.
$$

The ground-model projections on the finite-dimensional space $\ell_2(F_n)$
are norm dense in the set of all projections on this space in every forcing
extension. Indeed, an orthonormal basis of the range of a projection can be
approximated by vectors with rational complex coordinates and then
orthonormalized. Consequently, there are a ground-model projection
$Q_n\in\bb$ supported by $F_n$ and $q_n\leq r_n$ such that
$$
q_n\forces
\|[P_{\check F_n}\dot TP_{\check F_n},\check Q_n]\|>\check\varepsilon/3.
$$
Thus (a)--(e) are satisfied, and the inductive construction is complete.

Let us define
$$
\I=\{R\in\A:p'\forces\check R\notin\dot\U\}.
$$
The set $\I$ is a proper ideal of $\A$. Indeed, this follows immediately from
the fact that $p'$ forces that $\dot\U$ is an ultrafilter. Moreover,
$\J\subseteq\I$ by (5B).

Finally, before going to the proof of (6)--(10), note that for every $R\in\A$
we have the following:
\begin{enumerate}
\item[(f)] $QP_{F_n}=P_{F_n}QP_{F_n}=Q_n$ for all $n\in\N$,
\item[(g)] $Q_n=P_{F_n}Q$ for all $n\in\N$,
\item[(h)] if $R\notin\I$, then $RP_{F_n}=P_{F_n}R=P_{F_n}$ for infinitely
many $n\in\N$, and for all sufficiently large $n\in\N$ either
$RP_{F_n}=P_{F_n}$ or $RP_{F_n}=0$,
\item[(i)] if $R\in\I$, then $RP_{F_n}=P_{F_n}R=0$ for all but finitely many
$n\in\N$.
\end{enumerate}

Item (f) follows from (d) and from the definition of $Q$. Since $Q$,
$P_{F_n}$ and $Q_n$ are self-adjoint, taking adjoints in (f) gives (g).

For (h) and (i), fix $A\subseteq\N$ such that $R=P_A$. For sufficiently large
$n\in\N$, the element $A$ belongs to the algebra generated by
$\{A_l:l\leq n\}$. Since $F_n$ is contained in an atom of this algebra,
for all sufficiently large $n\in\N$ either $F_n\subseteq A$ or
$F_n\cap A=\emptyset$.

If $R\notin\I$, there is $r\leq p'$ such that
$$
r\forces\check R\in\dot\U.
$$
The condition $r$ occurs infinitely many times in the enumeration
$(p_n:n\in\N)$. At every sufficiently large stage $n$ such that $p_n=r$,
the atom $C_i''$ which is forced by $s_n\leq p_n$ to belong to $\dot\U$
must be below $A$. Hence $F_n\subseteq A$, proving the first part of (h).
The second part follows from the preceding observation.

If $R\in\I$, then
$$
p'\forces I-\check R\in\dot\U.
$$
For all sufficiently large $n\in\N$, the atom selected at stage $n$ is
therefore disjoint from $A$, and so $F_n\cap A=\emptyset$. This proves (i).

Now let us prove that $Q$ satisfies (6)--(10). We will often be using the
elementary fact that $UV\in\K$ if and only if $VU\in\K$ for self-adjoint
$U,V\in\bb$, which follows from the Schauder theorem and the fact that
$VU=V^*U^*=(UV)^*$. Thus only half of (7) and (8) needs to be proved.

Suppose that (6) fails. Then there is $r\leq p'$ such that
$$
r\forces[\dot T,\check Q]\in\K.
$$
Since the subspaces $\ell_2(F_n)$ are pairwise orthogonal, $r$ forces that
$$
\lim_{n\rightarrow\infty}\|P_{\check F_n}[\dot T,\check Q]P_{\check F_n}\|=0.
$$
Consequently, there are $r'\leq r$ and $k\in\N$ such that
$$
r'\forces
\|P_{\check F_n}[\dot T,\check Q]P_{\check F_n}\|\leq\check\varepsilon/3
$$
for every $n>k$. The condition $r'$ occurs infinitely many times in the
enumeration $(p_n:n\in\N)$. Choose $n>k$ such that $p_n=r'$. By (f) and (g),
the entire forcing $\PP$ forces that
$$
P_{\check F_n}[\dot T,\check Q]P_{\check F_n}
=[P_{\check F_n}\dot TP_{\check F_n},\check Q_n].
$$
Since $q_n\leq p_n=r'$, this contradicts (e), and so (6) holds.

For (7), let $R\in\A\setminus\I$. By (e), $Q_n$ is neither $0$ nor
$P_{F_n}$, and so
$$
\|Q_n\|=\|P_{F_n}-Q_n\|=1.
$$
For every $n\in\N$, choose $F_n$-supported unit vectors $w_n,u_n$ such that
$$
\|Q_n(w_n)\|=1\quad\hbox{and}\quad
\|(P_{F_n}-Q_n)(u_n)\|=1.
$$
By (h), there are infinitely many $n\in\N$ such that
$RP_{F_n}=P_{F_n}$. For these $n$ by (f) and  (g) we have
$$
\|RQ(w_n)\|=\|RQP_{F_n}(w_n)\|=\|Q_n(w_n)\|=1
$$
and
$$
\|R(I-Q)(u_n)\|=\|R(P_{F_n}-Q_n)(u_n)\|=1.
$$
The $w_n$s and, respectively, the $u_n$s are pairwise orthogonal. Therefore
neither $RQ$ nor $R(I-Q)$ is compact. Taking adjoints gives the remaining
two assertions in (7).

To prove (8), let $R\in\I$. By (i), there are only finitely many $n\in\N$
such that $RP_{F_n}\neq0$. By the definition of $Q$, and since
$R,P\in\D(\E)$, it follows that $QR$ is a finite dimensional perturbation
of $PR$. Hence
$$
(Q-P)R\in\K.
$$
Taking adjoints gives $R(Q-P)\in\K$, which proves (8).

For (9), let $R\in\A$ and let $A\subseteq\N$ satisfy $P_A=R$. It follows
from (h) and (i) that either $F_n\subseteq A$ or $F_n\cap A=\emptyset$ for
all $n>k$ and some $k\in\N$. Let
$$
B'=\N\setminus\bigcup_{n>k}F_n.
$$
Define $Q'$ by
$$
Q'(e_j)=
\begin{cases}
P(e_j)&\text{if $j\in B'$},\\
Q_n(e_j)&\text{if $j\in F_n,\ n>k$}.
\end{cases}
$$
It follows that $Q-Q'$ is a finite-rank operator. Moreover, the spaces spanned
by $\{e_j:j\in A\}$ and $\{e_j:j\in\N\setminus A\}$ are invariant for
$Q'$, since $P\in\D(\E)$ and either $F_n\subseteq A$ or
$F_n\cap A=\emptyset$ for all $n>k$. Thus $RQ'=Q'R$, and so (9) follows.

Note that by (9) the set $\pi[\A]\cup\{\pi(Q)\}$ generates a Boolean algebra
$\CC$ of projections of $\QQ$. We will use Lemma \ref{sikorski} to conclude
the existence of $h$ as in (10), so we need to show that
$$
\J_{\pi[\A],\pi(Q)}=\pi[\J],
$$
since $\J_{\pi[\A],\pi(P)}=\pi[\J]$ by (3) and (4).

If $R\in\J$, then $R\in\I$ and so, by (8),
$$
\pi(R)\pi(Q)=\pi(R)\pi(P)\in\pi[\A].
$$
Consequently,
$$
\pi[\J]\subseteq\J_{\pi[\A],\pi(Q)}.
$$

If $R\notin\J$, we consider two cases, $R\in\I\setminus\J$ and
$R\notin\I$, and in both of them we will prove that
$\pi(R)\notin\J_{\pi[\A],\pi(Q)}$.

If $R\in\I\setminus\J$, by (8) we have
$$
\pi(R)\pi(Q)=\pi(R)\pi(P).
$$
Since $R\notin\J_{\A,P}=\J$, the latter element does not belong to
$\pi[\A]$. Hence $\pi(R)\notin\J_{\pi[\A],\pi(Q)}$.

If $R\notin\I$, the filter generated by $R$ is disjoint from the proper ideal
$\I$. Hence there is an ultrafilter $\mathcal V$ of $\A$ which contains $R$
and is disjoint from $\I$. By (7), the ultrafilter $\pi[\mathcal V]$ of
$\pi[\A]$ is split by $\pi(Q)$, since every element of $\mathcal V$ belongs
to $\A\setminus\I$. Lemma \ref{ideal-split} therefore implies that
$$
\pi[\mathcal V]\cap\J_{\pi[\A],\pi(Q)}=\emptyset.
$$
In particular,
$$
\pi(R)\notin\J_{\pi[\A],\pi(Q)}.
$$
This proves that
$$
\J_{\pi[\A],\pi(Q)}=\pi[\J].
$$
In particular, the ideal $\J_{\pi[\A],\pi(Q)}$ is a proper ideal of $\pi[\A]$, and hence $\pi(Q)\notin\pi[\A]$.
Therefore the Boolean algebra
$\CC$ (defined just after the proof of (9)) is a simple extension of $\pi[\A]$ by $\pi(Q)$.

Finally, by (8),
$$
\pi(R)\pi(P)=\pi(R)\pi(Q)
$$
for every $R\in\J$. Thus the hypotheses of Lemma \ref{sikorski} are
satisfied for the simple extensions $\pi[\B]$ and $\CC$ of $\pi[\A]$.
Consequently, there is a Boolean isomorphism
$$
h:\pi[\B]\longrightarrow\CC
$$
which is the identity on $\pi[\A]$ and satisfies $h(\pi(P))=\pi(Q)$.
This proves (10) and completes the proof of the lemma.
\end{proof}

\begin{lemma}\label{main-lemma} Suppose that
\begin{enumerate}
\item $\A\subseteq\B\subseteq\QQ$ are countable Boolean algebras of projections,
\item $\B$ is generated over $\A$ by a projection $R\in\B\setminus\A$,
\item $p\in\PP$ forces that $\dot T\in\bb$ is self-adjoint, $\pi(\dot T)$
commutes with all elements of $\check\A$ and
$$
\pi(\dot T)\notin C^*(\check\A),
$$
\item $\dot\U$ is a $\PP$-name and $p$ forces that $\dot\U$ is an
ultrafilter of $\check\A$, $\check R$ splits $\dot\U$ and $\dot T$
C*-splits $\dot\U$.
\end{enumerate}
Then there is a projection $P\in\QQ$, a Boolean monomorphism
$g:\B\longrightarrow\QQ$ and $p'\leq p$ such that
\begin{enumerate}
\item[(5)] $[P,A]=0$ for every $A\in\A$,
\item[(6)] $g|\A$ is the identity on $\A$ and $g(R)=P$,
\item[(7)] $p'$ forces that $\check P$ does not commute with $\pi(\dot T)$.
\end{enumerate}
\end{lemma}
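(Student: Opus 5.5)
The plan is to reduce Lemma \ref{main-lemma} to Lemma \ref{substitution} by lifting the countable algebra $\B$ to a diagonal algebra and translating the C*-splitting hypothesis into the spectral condition (5D).

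\textbf{Step 1: diagonal lift.} By Lemma \ref{calkin-lift} there are an orthonormal basis $\E$ and a Boolean subalgebra $\B'\subseteq\D(\E)$ such that $\pi|\B':\B'\to\B$ is a Boolean isomorphism. Put $\A'=(\pi|\B')^{-1}[\A]$ and let $P'\in\D(\E)$ be the preimage of $R$. Then $\B'$ is a simple extension of $\A'$ by $P'$, so hypotheses (1)--(3) of Lemma \ref{substitution} hold. Let $\J=\J_{\A',P'}$ and transport $\dot\U$ to a name $\dot\U'$ for the corresponding ultrafilter of $\check\A'$. Since $\pi|\B'$ is an isomorphism computed in the ground model, the ideals $\J_{\A',P'}$ and $\J_{\A,R}$ correspond, and this correspondence persists in the extension. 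Because $p$ forces that $\check R$ splits $\dot\U$, Lemma \ref{ideal-split}, applied in the extension to the ground-model simple extension, gives $p\forces\check\J\cap\dot\U'=\emptyset$. This is (5B).

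\textbf{Step 2: spectral data.} In the extension, $\pi(\dot T)$ commutes with $\check\A$ and $\dot T$ C*-splits $\dot\U$. By Lemma \ref{c-split} there are reals $c_1<c_2$ with $c_1,c_2\in\sigma_{A\QQ A}(A\pi(\dot T)A)$ for all $A\in\dot\U$. Choose names $\dot c_1,\dot c_2$ for them, which gives (5C).

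For (5D), let $R'\in\dot\U'$ and let $S$ be a projection with $R'-S\in\K$. Then $\pi(S)=\pi(R')\in\dot\U$, and the spectrum of $S\dot TS$ in $S\bb S$ contains the spectrum of $\pi(S)\pi(\dot T)\pi(S)$ in the corner $\pi(S)\QQ\pi(S)$. This follows from the fact that spectrum does not grow under the quotient map, applied to the corner map $S\bb S\to\pi(S)\QQ\pi(S)$, which is a surjective unital $*$-homomorphism. Hence $\dot c_1,\dot c_2\in\sigma_{S\bb S}(S\dot TS)$. A subtle point is that Lemma \ref{c-split} is applied to $\dot\U$ as an ultrafilter of the ground-model algebra $\check\A$ inside the extension. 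This is harmless, since all lemmas cited hold in any model of {\sf ZFC}.

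\textbf{Step 3: apply Lemma \ref{substitution}.} This yields $p'\le p$, a projection $Q$ and a Boolean monomorphism $h:\pi[\B']\to\QQ$ with $h|\pi[\A']=\mathrm{Id}$ and $h(\pi(P'))=\pi(Q)$. Here $\pi[\B']=\B$ and $\pi[\A']=\A$. Set $P=\pi(Q)$ and $g=h$.

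Conclusions (5) and (6) follow from (9) and (10) of Lemma \ref{substitution}. For (7) we use (6) of that lemma: $p'\forces[\dot T,\check Q]\notin\K$, which means exactly that $\pi(\check Q)=\check P$ does not commute with $\pi(\dot T)$ in the extension. Here one must check that $\pi$ of the extended operator $\check Q$ is the canonical interpretation of $\check P$. This holds by the conventions on operators in Cohen extensions, since compact operators stay compact.

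\textbf{Main obstacle.} I expect the delicate part to be Step 2. The spectral condition must be obtained uniformly for all projections $S$ compactly equivalent to elements of $\dot\U'$, including projections that exist only in the extension. It also requires a careful passage between corners of $\bb$ and of $\QQ$. If the direct spectrum-inclusion argument is problematic, I would instead argue that the corner map is a unital $*$-homomorphism and that spectra shrink under unital $*$-homomorphisms, which gives the inclusion immediately.
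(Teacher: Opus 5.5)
Your proposal is correct and follows essentially the same route as the paper. You lift $\B$ to a diagonal algebra via Lemma \ref{calkin-lift} and transport $\dot\U$, then get (5B) from Lemma \ref{ideal-split}; you take $\dot c_1,\dot c_2$ from Lemma \ref{c-split} (via the maximum principle) and verify (5D) using that the corner map $S\bb S\to\pi(S)\QQ\pi(S)$ is a quotient under which spectra cannot grow; finally you apply Lemma \ref{substitution} with $P=\pi(Q)$ and $g=h$.
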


\begin{proof}
By Lemma \ref{calkin-lift} there are an orthonormal basis $\E$ of $\ell_2$
and a Boolean algebra $\B'\subseteq\D(\E)$ of projections such that
$$
\pi|\B':\B'\longrightarrow\B
$$
is a Boolean isomorphism. Put
$$
\A'=\pi^{-1}[\A]\cap\B'
$$
and let $R'\in\B'$ be the unique projection such that $\pi(R')=R$.
Since $\B$ is generated over $\A$ by $R$, the Boolean algebra $\B'$ is
generated over $\A'$ by $R'$. In particular, $\B'$ is a simple extension
of $\A'$ by $R'$.

Let $\dot\U'$ be a $\PP$-name such that
$$
p\forces\dot\U'=\pi^{-1}[\dot\U]\cap\check\A'.
$$
Since $\pi|\A'$ is a Boolean isomorphism onto $\A$, the condition $p$
forces that $\dot\U'$ is an ultrafilter of $\check\A'$ and
$\pi[\dot\U']=\dot\U$.

Let $\J=\J_{\A', R'}$. Since $p$ forces that $\check R$ splits $\dot\U$,
it also forces that $\check R'$ splits $\dot\U'$. Hence, by Lemma
\ref{ideal-split},
$$
p\forces\check\J\cap\dot\U'=\emptyset.
$$
Thus items (1)--(4), (5A) and (5B) of Lemma \ref{substitution} are
satisfied by $\A'$, $\B'$, $R'$, $\J$, $\dot T$ and $\dot\U'$.

By Lemma \ref{c-split} and the maximum principle, there are
$\PP$-names $\dot c_1,\dot c_2$  such that
$$
p\forces\dot c_1,\dot c_2\in\R\quad\hbox{and}\quad
\dot c_1<\dot c_2
$$
and
$$
p\forces\dot c_1,\dot c_2\in\sigma_{A\QQ A}(A\pi(\dot T)A)\ 
\hbox{for every}\  A\in\dot\U.\leqno (*)$$
We claim that item (5D) of Lemma \ref{substitution} is satisfied as well.
Suppose that $p$ forces that $\dot A'\in\dot\U'$ and $\dot S\in\bb$ is a
projection such that $\dot A'-\dot S\in\K$. Then $p$ forces that
$\pi(\dot S)=\pi(\dot A')\in\dot\U$. Hence by (*) applied to
$A=\pi(\dot S)$ the condition $p$ forces that
$$
\dot c_1,\dot c_2\in
\sigma_{\pi(\dot S)\QQ\pi(\dot S)}
(\pi(\dot S)\pi(\dot T)\pi(\dot S)).
$$
The corner $\pi(\dot S)\QQ\pi(\dot S)$ is a quotient of
$\dot S\bb\dot S$. Since the spectrum cannot grow when we pass to a quotient
(Lemma 2.5.7 of \cite{ilijas-book}), $p$ forces that
$$
\dot c_1,\dot c_2\in\sigma_{\dot S\bb\dot S}(\dot S\dot T\dot S).
$$
Thus all the hypotheses of Lemma \ref{substitution} are satisfied.

Apply Lemma \ref{substitution}. We obtain a projection $Q\in\bb$, a Boolean
monomorphism
$$
h:\pi[\B']\longrightarrow\QQ
$$
and $p'\leq p$ such that
$$
p'\forces[\dot T,\check Q]\notin\K,
$$
$[Q,A']\in\K$ for every $A'\in\A'$, and $h$ is the identity on
$\pi[\A']$ and satisfies $h(\pi(R'))=\pi(Q)$.

Put
$$
P=\pi(Q),\qquad g=h.
$$
Since $\pi[\B']=\B$ and $\pi[\A']=\A$, the map $g$ has domain $\B$,
$g|\A$ is the identity on $\A$ and
$$
g(R)=g(\pi(R'))=\pi(Q)=P.
$$
Moreover, $[Q,A']\in\K$ for every $A'\in\A'$ implies that $P$ commutes
with every element of $\A$. Finally,
$$
p'\forces[\pi(\dot T),\check P]
=\pi([\dot T,\check Q])\neq0.
$$
Thus (5)--(7) hold.
\end{proof}

We now use Lemma \ref{main-lemma} in an $\omega_1$-stage bookkeeping
construction to obtain Cohen indestructible masas under {\sf CH}.

\begin{proposition}[{\sf CH}]\label{ch-proposition}
Suppose that $\A$ is a Boolean algebra of cardinality $\omega_1=2^\omega$ such that
$\PP$ forces that $\check\A$ does not admit a countably generated ultrafilter.
Then there is a collection $\{\X_\xi:\xi<\omega_2\}$ of Boolean subalgebras
of projections in $\QQ$ such that
\begin{enumerate}
\item[(a)] $\PP$ forces that for every $\xi<\omega_2$ the C*-algebra
$C^*(\check\X_\xi)$ is a masa of $\QQ$ without a commutative lift,
\item[(b)] for every $\xi<\omega_2$ the Boolean algebra $\X_\xi$ is
isomorphic to $\A$,
\item[(c)] $\PP$ forces that $C^*(\check\X_\xi)$ and
$C^*(\check\X_\eta)$ are unitarily non-equivalent whenever
$\xi<\eta<\omega_2$.
\end{enumerate}
\end{proposition}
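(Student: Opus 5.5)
We will build, for each $\xi<\omega_2$, a Boolean monomorphism $\phi_\xi:\A\to Proj(\QQ)$ as the union of a continuous increasing chain of monomorphisms $\phi_\xi^\alpha:\A_\alpha\to Proj(\QQ)$, $\alpha<\omega_1$, where $(\A_\alpha)_{\alpha<\omega_1}$ is a fixed continuous increasing chain of countable subalgebras with union $\A$. We then set $\X_\xi=\phi_\xi[\A]$, so (b) holds automatically. Using {\sf CH}, and since $\PP$ is countable and satisfies c.c.c., there are only $\omega_1$ nice $\PP$-names for operators in $\bb$, and hence $\omega_1$ pairs $(q,\dot T)$ with $q\in\PP$. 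We fix a bookkeeping enumeration $(q_\alpha,\dot T_\alpha)_{\alpha<\omega_1}$ in which each pair appears cofinally often. Limit stages are unions. Successor stages extend $\phi^\alpha_\xi$ to $\A_{\alpha+1}$ by inserting the new generators one at a time; each is a simple extension, so one application of Lemma \ref{main-lemma} or Lemma \ref{injective-Q} suffices per generator.

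At stage $\alpha$ we deal with the pair $(q_\alpha,\dot T_\alpha)$. Suppose some $q\le q_\alpha$ forces that $\dot T_\alpha$ is self-adjoint, that $\pi(\dot T_\alpha)$ commutes with $\phi^\alpha_\xi[\A_\alpha]$, and that $\pi(\dot T_\alpha)\notin C^*(\phi^\alpha_\xi[\A_\alpha])$. Then Lemma \ref{split-point}, applied in the extension, yields a name $\dot\U$ for an ultrafilter of $\phi^\alpha_\xi[\A_\alpha]$ that is C*-split by $\dot T_\alpha$. Because $\PP$ forces that $\check\A$ has no countably generated ultrafilter, Lemma \ref{gdelta-split}, applied in the extension, shows that $\dot\U$ is split by some $A\in\A\setminus\A_\alpha$. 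Strengthening $q$, we may assume $A$ is a fixed ground-model element. We enlarge $\A_{\alpha+1}$ if necessary so that it contains $A$; this is harmless because it only re-indexes the chain. We apply Lemma \ref{main-lemma} to $\A=\phi^\alpha_\xi[\A_\alpha]$, $\B=$ the algebra generated by adding $A$ abstractly (first extended by Lemma \ref{injective-Q} to get some image $R$), and the name $\dot\U$. This gives $P$, a monomorphism $g$ with $g(R)=P$, and $p'\le q$ forcing that $P$ does not commute with $\pi(\dot T_\alpha)$. We define $\phi^{\alpha+1}_\xi$ on $A$ via $g$, and extend to the rest of $\A_{\alpha+1}$ by Lemma \ref{injective-Q}.

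For (a), suppose toward a contradiction that some $q$ forces that a self-adjoint $\dot T$ with $\pi(\dot T)\notin C^*(\check\X_\xi)$ commutes with all of $\check\X_\xi$. We choose a nice name and a stage $\alpha$ at which the pair $(q,\dot T)$ is handled. At that stage $\pi(\dot T)\notin C^*(\phi^\alpha_\xi[\A_\alpha])$ holds below $q$, so we produced $p'\le q$ and a projection in $\X_\xi$ that $p'$ forces not to commute with $\pi(\dot T)$, a contradiction. Lemma \ref{sa-masa} then gives the masa property. For the absence of a commutative lift, we use Lemma \ref{masa-masaQQ}: a lift would be $\pi[\M]$ with $\M$ a masa of $\bb$ coded by a real in the extension. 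We additionally diagonalize against names for such codes, $\omega_1$ many, by the same bookkeeping. If the masa were $\pi[\M]$, then $\pi[\M]$ would have to equal $C^*(\X_\xi)$. Against such a name we instead make sure that some projection of the form $\pi(M)$, $M\in\M$, is destroyed: we pick a projection of $\M$ not in the current countable algebra but commuting with it, note that $\M$ contains such elements, and use it as the $\dot T$ above. Alternatively, the density of $C^*(\X_\xi)$ is $\omega_1<2^\omega$ in the Cohen-one-real extension only if {\sf CH} fails. So the real argument is the diagonalization, and we expect it to mirror \cite{masas}.

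For (c), the construction branches: at each stage we may freely choose among many extensions. We would run the $\omega_2$ constructions so that each pair $(\dot U,\xi)$, with $\dot U$ a name for a unitary, is handled against a specific $\eta$; but $\omega_2$ algebras versus $\omega_1$ stages forces a counting argument instead. Any fixed $\X_\xi$ has only $\omega_1$ names for unitary conjugates, each of which is a masa determined by a name. Hence each $\X_\xi$ is forced to be unitarily equivalent to at most $\omega_1$ of the others, and so by a free-set argument we can thin to $\omega_2$ pairwise non-equivalent ones. The main obstacle is the bookkeeping step where Lemma \ref{gdelta-split} must produce a ground-model splitter, which requires deciding the splitting element by a condition, and ensuring that the enlargement of $\A_{\alpha+1}$ stays compatible with the chain.
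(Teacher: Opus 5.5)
\textbf{There is a genuine gap in (c).} Your treatment of (a) and (b) is essentially the paper's. The splitting stages use Lemmas \ref{split-point}, \ref{gdelta-split}, \ref{injective-Q} and \ref{main-lemma}, and you kill lifts as in the paper's $Z$-stages. That part needs only small repairs:
\begin{itemize}
\item The lift-killing step should be triggered only when the condition forces $C^*(\phi^\alpha_\xi[\A_\alpha])\subseteq\pi[\dot\M]$. Otherwise elements of $\pi[\M]$ need not commute with the current algebra.
\item The chain $(\A_\alpha)$ must be built dynamically for each $\xi$ rather than fixed in advance.
\item Your remark about density $\omega_1<2^\omega$ is void, since $\PP$ preserves {\sf CH}.
\end{itemize}

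You claim that each $\X_\xi$ is forced to be unitarily equivalent to at most $\omega_1$ of the others. This does not follow from there being only $\omega_1$ names for unitaries. That count bounds the number of \emph{algebras} $UC^*(\check\X_\xi)U^*$, not the number of \emph{indices} $\eta$ for which $C^*(\check\X_\eta)$ is among them. You would need $\eta\mapsto C^*(\check\X_\eta)$ to be injective, or at most $\omega_1$-to-one, and nothing in your construction guarantees this. Saying ``we may freely choose among many extensions'' does not stop different runs from producing the same final algebra. If all $\X_\xi$ coincided, each would be equivalent to all the others. So the free-set step has no input.

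\textbf{The missing idea.} The direct diagonalization you dismissed does work, because it is done by recursion on $\xi$: construct $\X_\xi$ only after $\{\X_\eta:\eta<\xi\}$ are fixed.
\begin{itemize}
\item Since $|\xi|\le\omega_1$ and $\PP$ forces that $\QQ$ has only $2^\omega=\omega_1$ unitaries, there are few algebras to defeat. The names for $UC^*(\check\X_\eta)U^*$ (for $\eta<\xi$ and $U$ unitary), together with the names for $\pi[\M]$ (for $\M$ a masa of $\bb$), form a family $\FF_\xi$ of only $\omega_1$ names for nonseparable commutative algebras.
\item A single construction of length $\omega_1$ defeats all of them, using the mechanism you already use for lifts. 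At a stage devoted to $(p,\dot\CC)$ with $p\forces C^*(\check\B_\alpha)\subseteq\dot\CC$, take a name $\dot R\in\dot\CC\setminus C^*(\check\B_\alpha)$; this exists by nonseparability, e.g.\ via Theorem \ref{p-main}.
\item Lift $\dot R$ to a self-adjoint $\dot T$ and run the splitting step. This gives $p'\le p$ forcing $[\check B_\alpha,\dot R]\neq0$, so $p$ cannot force $C^*(\check\B)=\dot\CC$.
\end{itemize}
This is exactly how the paper obtains (c).

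Alternatively, your counting route could be rescued by actually proving distinctness. At branching stages you could choose two non-commuting realizations of the new generator; Lemma \ref{main-lemma}, applied to a ground-model lift of the first realization, provides the second. This would give $2^{\omega_1}\ge\omega_2$ pairwise distinct algebras, each possibly equivalent to at most $\omega_1$ others, and a greedy selection would yield $\omega_2$ of them. That argument, however, is absent from your proposal.
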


\begin{proof}
First we will show how, for a family $\FF$  of cardinality $\omega_1$ of $\PP$-names  
for masas of $\QQ$, one can construct a Boolean algebra
$\B$ of projections in $\QQ$ such that
\begin{enumerate}
\item[(i)] $\B$ is isomorphic to $\A$,
\item[(ii)] $\PP$ forces that $C^*(\check\B)$ is a masa of $\QQ$,
\item[(iii)] $\PP\forces C^*(\check\B)\neq\dot\CC$
for every $\dot\CC\in\FF$.
\end{enumerate}

 Let
\begin{itemize}
\item $X,Y,Z$ be a partition of $\omega_1$ into three uncountable sets,
\item $\{A_\alpha:\alpha\in X\}$ be an enumeration of all elements of $\A$,
\item $\{(p_\alpha,\dot T_\alpha):\alpha\in Y\}$ be an enumeration of
all pairs such that $p_\alpha\in\PP$ and $\dot T_\alpha$ is a nice
$\PP$-name for a self-adjoint element of $\bb$,
\item $\{(p_\alpha,\dot\CC_\alpha):\alpha\in Z\}$ be an enumeration of
the Cartesian product of $\PP$ and $\FF$.
\end{itemize}
The existence of these enumerations follows from {\sf CH} and the
countability of $\PP$.

By induction on $\alpha<\omega_1$ we construct elements
$A_\alpha'\in\A$, projections $B_\alpha\in\QQ$, countable Boolean
algebras $\A_\alpha\subseteq\A$ and $\B_\alpha\subseteq\QQ$, and
Boolean isomorphisms $h_\alpha$ such that
\begin{enumerate}
\item $\A_\alpha$ is the Boolean subalgebra of $\A$ generated by
$\{A_\beta':\beta<\alpha\}$.
\item The projections $\{B_\beta:\beta<\alpha\}$ commute,
$\B_\alpha$ is the Boolean algebra generated by them, and
$$
h_\alpha:\A_\alpha\longrightarrow\B_\alpha
$$
is a Boolean isomorphism satisfying $h_\alpha(A_\beta')=B_\beta$
for every $\beta<\alpha$.
\item If $\alpha\in X$ and
$
\{\gamma\in\alpha\cap X:A_\gamma\notin\A_\alpha\}
$
is nonempty, then
$
A_\alpha'=A_\beta$, and 
$$\beta=\min\{\gamma\in\alpha\cap X:A_\gamma\notin\A_\alpha\}.
$$
\item If $\alpha\in Y$ and
$$
p_\alpha\forces
\pi(\dot T_\alpha)\notin C^*(\check\B_\alpha), 
\pi(\dot T_\alpha) \ \hbox{commutes with all elements of} \
C^*(\check\B_\alpha),$$
then there is $p_\alpha'\leq p_\alpha$
such that
$$
p_\alpha'\forces
[\check B_\alpha,\pi(\dot T_\alpha)]\neq0.
$$
\item If $\alpha\in Z$ and
$$
p_\alpha\forces
C^*(\check\B_\alpha)\subseteq\dot\CC_\alpha,
$$
then there are $p_\alpha'\leq p_\alpha$ and a $\PP$-name
$\dot R_\alpha$ such that
$$
p_\alpha'\forces
\dot R_\alpha\in\dot\CC_\alpha
\quad\hbox{and}\quad
[\check B_\alpha,\dot R_\alpha]\neq0.
$$
\end{enumerate}

 In particular, conditions (1) and (2) imply that
$\A_\beta\subseteq\A_\alpha$, $\B_\beta\subseteq\B_\alpha$ and
$h_\beta\subseteq h_\alpha$ whenever $\beta<\alpha<\omega_1$.

Put
$$
\A'=\bigcup_{\alpha<\omega_1}\A_\alpha,\qquad
\B=\bigcup_{\alpha<\omega_1}\B_\alpha,\qquad
h=\bigcup_{\alpha<\omega_1}h_\alpha.
$$
Condition (3) and the fact that $X$ is unbounded in $\omega_1$ imply that
every element $A_\gamma$, for $\gamma\in X$, eventually belongs to some
$\A_\alpha$. Hence $\A'=\A$, and $h:\A\longrightarrow\B$ is a Boolean
isomorphism. This proves (i).

Now let us see that condition (4) implies (ii). Otherwise, by
Lemma \ref{sa-masa}, there are $p\in\PP$ and a $\PP$-name $\dot T$
for a self-adjoint element of $\bb$ such that
$$
p\forces
\pi(\dot T)\notin C^*(\check\B)
\quad\hbox{and}\quad
\pi(\dot T)\ \hbox{commutes with every element of}\ C^*(\check\B).
$$
Replacing $\dot T$ by an equivalent nice name,
 we may assume that $\dot T$ is a nice name. So there is $\alpha\in Y$ such that
$(p_\alpha,\dot T_\alpha)=(p,\dot T)$.  Since
$\B_\alpha\subseteq\B$, the hypothesis of (4) is satisfied at stage
$\alpha$. Hence there is $p_\alpha'\leq p_\alpha$ such that
$$
p_\alpha'\forces
[\check B_\alpha,\pi(\dot T_\alpha)]\neq0.
$$
But $B_\alpha\in\B$, which contradicts the choice of $p$ and $\dot T$.
Thus $\PP$ forces that $C^*(\check\B)$ is a masa.

Condition (5) implies (iii). Indeed, suppose that for some
$\PP$-name $\dot\CC\in\FF$ there is $p\in\PP$ such that
$$
p\forces C^*(\check\B)=\dot\CC.
$$
Choose $\alpha\in Z$ such that
$(p_\alpha,\dot\CC_\alpha)=(p,\dot\CC)$. Then $p_\alpha$ forces that
$C^*(\check\B_\alpha)\subseteq\dot\CC_\alpha$, and so by (5) there
are $p_\alpha'\leq p_\alpha$ and a $\PP$-name $\dot R_\alpha$ such that
$$
p_\alpha'\forces
\dot R_\alpha\in\dot\CC_\alpha
\quad\hbox{and}\quad
[\check B_\alpha,\dot R_\alpha]\neq0.
$$
On the other hand, $B_\alpha\in\B$, and $p_\alpha'$ forces that both
$\check B_\alpha$ and $\dot R_\alpha$ belong to the commutative algebra
$\dot\CC_\alpha=C^*(\check\B)$, which is a contradiction.

So to prove (i) - (iii) we are left with showing how to do the inductive
step $\alpha<\omega_1$ satisfying (1) - (5).

For $\alpha=0$ put $\A_0=\{0,1\}$ and $\B_0=\{0,1\}$, and let
$h_0:\A_0\longrightarrow\B_0$ be the unique Boolean isomorphism.

Suppose that the objects satisfying (1) - (5) have been constructed
before stage $\alpha<\omega_1$. At nonzero limit stages put
$$
h_\alpha=\bigcup_{\beta<\alpha}h_\beta.
$$

First consider the case of $\alpha\in X$. If
$$
\{\gamma\in\alpha\cap X:A_\gamma\notin\A_\alpha\}
$$
is nonempty, let $\beta$ be its least element and put
$A_\alpha'=A_\beta$, as required in (3). If this set is empty, choose
any
$$
A_\alpha'\in\A\setminus\A_\alpha.
$$
Such an element exists because $\A_\alpha$ is countable and
$|\A|=\omega_1$. By Lemma \ref{injective-Q}, $h_\alpha$ can be extended
to a Boolean monomorphism $h_{\alpha+1}$ defined on the Boolean algebra
generated by $\A_\alpha\cup\{A_\alpha'\}$. Put
$$
B_\alpha=h_{\alpha+1}(A_\alpha').
$$
Then (1) - (3) hold, while (4) and (5) are void.

Now suppose that $\alpha\in Y$. If $p_\alpha$ does not force that
$\pi(\dot T_\alpha)$ belongs to
$
\QQ\setminus C^*(\check\B_\alpha)
$
and commutes with the entire $C^*(\check\B_\alpha)$, we proceed as in
the case of $\alpha\in X$, choosing an arbitrary
$A_\alpha'\in\A\setminus\A_\alpha$.

Suppose, therefore, that
$$
p_\alpha\forces
\pi(\dot T_\alpha)\notin C^*(\check\B_\alpha), \ 
\pi(\dot T_\alpha) \ \hbox{commutes with every element of}\ 
C^*(\check\B_\alpha).$$
 Since $p_\alpha$ forces that $\dot T_\alpha$ is self-adjoint,
$p_\alpha$ forces that
$$
C^*(\check\B_\alpha\cup\{\pi(\dot T_\alpha)\})
$$
is a commutative C*-algebra properly extending
$C^*(\check\B_\alpha)$. By Lemma \ref{split-point}, there is a
$\PP$-name $\dot\U_\alpha$ such that $p_\alpha$ forces that
$\dot\U_\alpha$ is an ultrafilter of $\check\B_\alpha$ which is
C*-split by $\dot T_\alpha$.

The hypothesis of the proposition that $\A$ in the extension by $\PP$ has no
countably generated ultrafilter and  Lemma \ref{gdelta-split}  imply that the ultrafilter
$h_\alpha^{-1}[\dot\U_\alpha]$ is split by an element of
$\A\setminus\A_\alpha$. By strengthening $p_\alpha$, we can decide
such an element. Thus there are $s_\alpha\leq p_\alpha$ and
$\gamma\in X$ such that
$$
A_\gamma\in\A\setminus\A_\alpha
$$
and
$$
s_\alpha\forces
\check A_\gamma\ \hbox{splits}\ h_\alpha^{-1}[\dot\U_\alpha].
$$
Put $A_\alpha'=A_\gamma$. By Lemma \ref{injective-Q} extend
$h_\alpha$ to a Boolean monomorphism $h_{\alpha+1}'$ defined on the
Boolean subalgebra of $\A$ generated by $\A_\alpha\cup\{A_\alpha'\}$. Then
$s_\alpha$ forces that $h_{\alpha+1}'(A_\alpha')$ splits
$\dot\U_\alpha$.

Let $\B_{\alpha+1}'$ be the Boolean algebra generated by $\B_\alpha$
and $h_{\alpha+1}'(A_\alpha')$. We can now apply
Lemma \ref{main-lemma} to
$$
\B_\alpha\subseteq\B_{\alpha+1}'
$$
with $p=s_\alpha$, $R=h_{\alpha+1}'(A_\alpha')$ and
$\dot T=\dot T_\alpha$. We obtain a projection $R'\in\QQ$, a Boolean
monomorphism
$$
g:\B_{\alpha+1}'\longrightarrow\QQ
$$
which is the identity on $\B_\alpha$ and satisfies
$g(h_{\alpha+1}'(A_\alpha'))=R'$, and a condition
$p_\alpha'\leq s_\alpha$ which forces that $\check R'$ does not
commute with $\pi(\dot T_\alpha)$.

Put
$$
B_\alpha=R'
\quad\hbox{and}\quad
h_{\alpha+1}=g\circ h_{\alpha+1}'.
$$
The properties of $g$ imply that $h_{\alpha+1}$ extends $h_\alpha$
and $h_{\alpha+1}(A_\alpha')=B_\alpha$. Moreover, $B_\alpha$
commutes with every element of $\B_\alpha$ by Lemma
\ref{main-lemma}(5), so condition (4) follows from Lemma
\ref{main-lemma}(7). Conditions (3) and (5) are void in this case.

Finally, consider the case of $\alpha\in Z$. If $p_\alpha$ does not
force that
$$
C^*(\check\B_\alpha)\subseteq\dot\CC_\alpha,
$$
we proceed as in the case of $\alpha\in X$.

Suppose, therefore, that
$$
p_\alpha\forces
C^*(\check\B_\alpha)\subseteq\dot\CC_\alpha.
$$
The algebra $C^*(\B_\alpha)$ is separable. On the other hand,
Theorem \ref{p-main} implies that $p_\alpha$ forces that the masa
$\dot\CC_\alpha$ is nonseparable, since a separable masa would be
$*$-isomorphic to $C(L)$ for a compact metrizable space $L$, and every
point of $L$ has character at most $\omega<\mathfrak p$. We may therefore  find a $\PP$-name
$\dot R_\alpha$ for a self-adjoint element such that
$$
p_\alpha\forces
\dot R_\alpha\in
\dot\CC_\alpha\setminus C^*(\check\B_\alpha).
$$
Since $p_\alpha$ forces that
$$
C^*(\check\B_\alpha)\cup\{\dot R_\alpha\}\subseteq\dot\CC_\alpha,
$$
it forces that $\dot R_\alpha$ commutes with every element of
$C^*(\check\B_\alpha)$. Choose a $\PP$-name $\dot T_\alpha'$ for a
self-adjoint element of $\bb$ such that
$$
p_\alpha\forces \pi(\dot T_\alpha')=\dot R_\alpha.
$$

We now repeat the construction from the case $\alpha\in Y$, with
$\dot T_\alpha'$ in place of $\dot T_\alpha$. It gives an element
$A_\alpha'\in\A\setminus\A_\alpha$, a projection $B_\alpha$, an
extension $h_{\alpha+1}$ of $h_\alpha$, and a condition
$p_\alpha'\leq p_\alpha$ such that
$$
p_\alpha'\forces
[\check B_\alpha,\dot R_\alpha]\neq0.
$$
Thus condition (5) is satisfied. Conditions (3) and (4) are void in this case.

This completes the inductive construction and proves the existence,
for every fixed family $\FF$ as above, of a Boolean algebra $\B$ satisfying
(i) - (iii).

We now choose $\FF$ so that the resulting masa has no commutative
lift. Since $\PP$ preserves {\sf CH}, it forces that there are exactly
$2^\omega=\omega_1$ masas in $\bb$ by Proposition 12.3.1 of
\cite{ilijas-book}. Hence there is a family
$$
\FF=\{\dot\CC_\alpha:\alpha\in Z\}
$$
of $\PP$-names such that
$$
\PP\forces 
\{\dot\CC_\alpha:\alpha\in Z\}
=
\{\pi[\M]:\M\ \hbox{is a masa of}\ \bb\}.
$$
Apply the preceding construction to this family $\FF$. If
$C^*(\check\B)$ had a commutative lift in the forcing extension, then
by Lemma \ref{masa-masaQQ} it would be equal to $\pi[\D]$ for some
masa $\D$ of $\bb$. This contradicts (iii). Thus $\PP$ forces that
$C^*(\check\B)$ has no commutative lift.

Finally, we construct the family from the statement of the proposition
by induction on $\xi<\omega_2$. Suppose that the Boolean algebras
$\{\X_\eta:\eta<\xi\}$ have already been constructed. Since
$\xi<\omega_2$, we have $|\xi|\leq\omega_1$. Moreover, $\PP$ forces
that the set of unitary elements of $\QQ$ has cardinality
$2^\omega=\omega_1$. Consequently, there is a family $\FF_\xi$ of
cardinality $\omega_1$ of $\PP$-names such that
$$
\PP\forces
\FF_\xi=
\{\pi[\M]:\M\ \hbox{is a masa of}\ \bb\}
\cup
\{UC^*(\check\X_\eta)U^*:
U\ \hbox{is unitary},\ \eta<\xi\}.
$$
Apply the preceding construction with $\FF=\FF_\xi$, and denote the
resulting Boolean algebra by $\X_\xi$. Then $\X_\xi$ is isomorphic to
$\A$, and $\PP$ forces that $C^*(\check\X_\xi)$ is a masa without a
commutative lift. Moreover, it is not unitarily equivalent to
$C^*(\check\X_\eta)$ for any $\eta<\xi$.

The family $\{\X_\xi:\xi<\omega_2\}$ therefore satisfies (a) - (c),
which completes the proof.
\end{proof}

We now pass from the Cohen indestructible masas constructed under {\sf CH}
to Boolean algebras appearing in arbitrary Cohen extensions.

\begin{proposition}[{\sf CH}]\label{cohen-proposition}
Let $\kappa\geq\omega_2$ and suppose that $\dot \A$ is a $\PP_\kappa$-name such that
$
\PP_\kappa$ forces that $\dot \A$ is a Boolean algebra of cardinality
 $\omega_1$  not admitting countably generated  ultrafilters.
Then there are $\PP_\kappa$-names
$\{\dot\CC_\alpha:\alpha<\omega_2\}$ such that $\PP_\kappa$ forces that
\begin{enumerate}
\item[(a)] $\dot\CC_\alpha$ is a masa of $\QQ$ without a commutative lift
for every $\alpha<\omega_2$,
\item[(b)] $\dot\CC_\alpha$ is $*$-isomorphic to $C(K_{\dot \A})$
for every $\alpha<\omega_2$,
\item[(c)] $\dot\CC_\alpha$ and $\dot\CC_\beta$ are unitarily non-equivalent
for every $\alpha<\beta<\omega_2$.
\end{enumerate}
\end{proposition}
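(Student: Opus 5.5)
The plan is a factorization argument that reduces the statement to Proposition \ref{ch-proposition} applied in an intermediate model. Since $\PP_\kappa$ is c.c.c. and $\dot\A$ is forced to have cardinality $\omega_1$, I first replace $\dot\A$ by a name for a Boolean algebra structure on $\omega_1$ (forced isomorphic to $\dot\A$). Such a name is determined by the $\omega_1$ many reals coding the operations. Each of these reals depends on only countably many coordinates, so there is $S\subseteq\kappa$ with $|S|=\omega_1$ such that $\dot\A$ is (equivalent to) a $\PP_S$-name. Using $\kappa\geq\omega_2$, I pick $\delta\in\kappa\setminus S$ and write
\[
\PP_\kappa\cong\PP_S\times\PP_{\{\delta\}}\times\PP_{\kappa\setminus(S\cup\{\delta\})}.
\]
Let $V_1=V[G\cap\PP_S]$. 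Then $V_1$ satisfies {\sf CH}, because we added only $\omega_1$ Cohen reals to a model of {\sf CH}. In $V_1$, $\A$ is a Boolean algebra of cardinality $\omega_1=2^\omega$.

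The key hypothesis to verify in $V_1$ is that $\PP=\PP_{\{\delta\}}$ forces that $\check\A$ has no countably generated ultrafilter. I argue by contradiction. If some $p\in\PP$ forced a countable base of an ultrafilter of $\check\A$, then in $V[G]$ that base would still generate an ultrafilter of $\A$, since $\A$ itself does not change. To handle the remaining forcing, I first choose the factorization so that the generic through $\PP_{\{\delta\}}$ meets $p$. This is possible by genericity and homogeneity: replace $\delta$ by a suitable coordinate, or argue with names below any condition. Then the base is in $V[G\cap\PP_{S\cup\{\delta\}}]$, and being a generating base for an ultrafilter of $\A$ is upward absolute, since it is a statement about elements of $\A$ only. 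This contradicts the assumption that $\PP_\kappa$ forces that $\dot\A$ has no countably generated ultrafilter. More precisely, I will phrase it as: $\PP_\kappa$ forces that for every $\delta\notin S$, the intermediate model $V_1[G_\delta]$ contains no countably generated ultrafilter of $\A$. Hence in $V_1$, $\PP$ forces that $\check\A$ has none.

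Applying Proposition \ref{ch-proposition} in $V_1$ gives Boolean algebras $\X_\xi\subseteq\QQ$ for $\xi<\omega_2$, each isomorphic to $\A$, such that $\PP$ forces (a)--(c) for the algebras $C^*(\check\X_\xi)$. I set $\dot\CC_\xi$ to be a name for $C^*(\X_\xi)$ computed in $V[G]$, with the ground-model operators interpreted as in Section 2. Part (b) then follows from Lemma \ref{cstar-ba-iso} and $\X_\xi\cong\A$.

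\textbf{Main obstacle.} The difficulty is transferring (a) and (c) from $V_1[G_\delta]$ to the full extension $V[G]$. Here the masa property and unitary inequivalence have to survive the further Cohen forcing $\PP_{\kappa\setminus(S\cup\{\delta\})}$. My approach uses that Proposition \ref{ch-proposition} gives the conclusion for \emph{every} single Cohen real over $V_1$. Suppose in $V[G]$ there is a self-adjoint $T$ commuting with $C^*(\X_\xi)$ but not in it. Then $T$ lies in $V_1[G\cap\PP_{S'}]$ for a countable $S'$ disjoint from $S$. Since $\PP_{S'}\cong\PP$, this contradicts the $\PP$-forced masa property over $V_1$; the statement ``$\pi(T)$ commutes with $\X_\xi$ and $\pi(T)\notin C^*(\X_\xi)$'' is absolute between these models, because membership in $C^*(\X_\xi)$ of a fixed element is witnessed by countable data. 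The same reflection works for commutative lifts, which are coded by a real (a masa of $\bb$ via Lemma \ref{masa-masaQQ}), and for unitaries witnessing equivalence. Each time the witness falls into a countable-support subextension over $V_1$, which is a single Cohen extension. The point needing care is the absoluteness of non-membership in $C^*(\X_\xi)$ upward from the intermediate model to $V[G]$. To handle it, I would use that the norm distance from $\pi(T)$ to $C^*(\X_\xi)$ is computed by the separable data of countable subalgebras.
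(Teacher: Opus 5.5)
Your proposal is correct and follows essentially the same route as the paper. You reduce $\dot\A$ to a $\PP_S$-name with $|S|=\omega_1$, use {\sf CH} in $V[G\cap\PP_S]$, and get that $\PP$ adds no countably generated ultrafilter from upward absoluteness plus the factorization (you isolate one coordinate $\delta$ where the paper uses $\PP\times\PP_{\kappa\setminus S}\cong\PP_{\kappa\setminus S}$); you then apply Proposition \ref{ch-proposition} and transfer (a) and (c) by reflecting witnesses into a single Cohen extension over $V[G\cap\PP_S]$ given by a countable $S'$, which should be taken nonempty as in the paper, and your observation that membership in $C^*(\X_\xi)$ is absolute because essential norms are computed from matrix coefficients spells out a detail the paper leaves implicit.
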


\begin{proof}
Since $\dot\A$, together with its Boolean operations, can be coded by a structure
of cardinality $\omega_1$, we may assume that there is $S\subseteq\kappa$ of cardinality $\omega_1$
such that $\dot\A$ is a $\PP_S$-name. By the factorization
of $
\PP_\kappa$  into $\PP_S\times\PP_{\kappa\setminus S}$,
the forcing $\PP_S$ forces that, after the interpretation of $\dot\A$,
the forcing $\PP_{\kappa\setminus S}$ forces that this Boolean algebra
does not admit a countably generated ultrafilter.

Let $\G_S\subseteq\PP_S$ be generic and work temporarily in $V[\G_S]$. Put
$\A$ to be the interpretation of $\dot\A$ in $V[\G_S]$.
Since $|S|=\omega_1$, the forcing $\PP_S$ is c.c.c. and has cardinality
$\omega_1$. Since the ground model satisfies {\sf CH}, we have
${\sf CH}$ in $V[\G_S]$.
Now we note that 
$$
\PP\forces \check\A\ \hbox{does not admit a countably generated ultrafilter}.\leqno (1)
$$
Indeed, otherwise some condition in $\PP$ would force that $\check\A$
has a countably generated ultrafilter. Such an ultrafilter, together with
a countable family generating it, would remain a countably generated
ultrafilter after forcing with $\PP\times\PP_{\kappa\setminus S}$ which is isomorphic to 
$\PP_{\kappa\setminus S}$. This would
contradict the fact that $\PP_{\kappa\setminus S}$ forces that $\check\A$
does not admit a countably generated ultrafilter.

Using (1), we may therefore apply Proposition \ref{ch-proposition} in $V[\G_S]$.
It gives a family
$$
\{\X_\alpha:\alpha<\omega_2\}
$$
of Boolean algebras of projections in $\QQ$ in $V[\G_S]$ such that
\begin{enumerate}
\item[(2)] $\X_\alpha$ is Boolean isomorphic to $\A$
for every $\alpha<\omega_2$,
\item[(3)] $\PP$ forces that $C^*(\check\X_\alpha)$ is a masa of $\QQ$
without a commutative lift for every $\alpha<\omega_2$,
\item[(4)] $\PP$ forces that $C^*(\check\X_\alpha)$ and
$C^*(\check\X_\beta)$ are unitarily non-equivalent
for every $\alpha<\beta<\omega_2$.
\end{enumerate}
The isomorphism between ground model algebras is absolute so
 $\PP_{\kappa\setminus S}$ forces that $\X_\alpha$ is Boolean isomorphic to $\A$
for each $\alpha<\omega_2$.

We will show that the conclusions forced in (3) and (4) are in fact forced by
$\PP_{\kappa\setminus S}$ as well. Let $\G_{\kappa\setminus S}$ be any
$\PP_{\kappa\setminus S}$-generic filter over $V[\G_S]$. By the discussion
in Subsection 2.6, every operator and every code of a masa of $\bb$ appearing
in $V[\G_S\times\G_{\kappa\setminus S}]$ already appears in an intermediate
model obtained by adding Cohen reals indexed by a countable nonempty set
$D\subseteq\kappa\setminus S$. Since $\PP_D$ is isomorphic to $\PP$,
statements (3) and (4) hold in every such intermediate model.

Any failure of (3) or (4) has a witness of this kind. Indeed, by Lemma
\ref{sa-masa}, failure of maximality is witnessed by a self-adjoint element
of $\QQ$ commuting with $C^*(\X_\alpha)$; unitary equivalence is witnessed
by a unitary element of $\QQ$; and, once maximality has been established,
Lemma \ref{masa-masaQQ} shows that the existence of a commutative lift is
witnessed by a masa $\M$ of $\bb$ such that
$\pi[\M]=C^*(\X_\alpha)$. Representatives of the first two witnesses and
a code of the last one belong to one of the above intermediate models, where
the same objects are witnesses, contradicting (3) or (4). Thus both
conclusions are preserved after forcing with $\PP_{\kappa\setminus S}$.

Returning to the ground model, choose $\PP_S$-names $\dot\X_\alpha$
for the algebras $\X_\alpha$, for $\alpha<\omega_2$, and let
$\dot\CC_\alpha$ be a $\PP_\kappa$-name such that
$$
\PP_\kappa\forces \dot\CC_\alpha=C^*(\dot\X_\alpha).
$$
By (2), Lemma \ref{cstar-ba-iso} and the Stone duality, for every $\alpha<\omega_2$
the forcing $\PP_\kappa$
forces that
$\dot\CC_\alpha$ is $*$-isomorphic to $C(K_{\dot\A})$.
Together with the preceding preservation
arguments, this proves (a) - (c).
\end{proof}

\section*{Use of digital assistance}

During the preparation of this paper the author  used 
 a commercial digital assistant, often referred to as artificial intelligence (AI).

It was used for language editing, checking the internal consistency
of notation, references and LaTeX, and discussing the exposition of proofs
and possible gaps. The mathematical ideas and arguments, as well as all
decisions concerning their final form, are the author's. Suggestions produced
by the assistant were checked by the author before being incorporated into
the manuscript. Bibliographic information suggested by the assistant was
independently verified.

The above paragraph was prepared by the digital assistant itself.

\bibliographystyle{amsplain}

\end{document}